\documentclass[a4paper,11pt,reqno]{amsart}
\usepackage{amsmath,amsfonts,amsthm,amssymb}
\usepackage{bbm}
\usepackage{dsfont}
\usepackage[foot]{amsaddr}
\usepackage{hyperref}
\usepackage{enumitem}
\usepackage{srcltx}
\theoremstyle{plain}
\newtheorem{lem}{Lemma}[section]
\newtheorem{thm}[lem]{Theorem}
\newtheorem{prop}[lem]{Proposition}
\newtheorem{cor}[lem]{Corollary}
\theoremstyle{definition}

\newtheorem{defn}[lem]{Definition}
\newtheorem{rem}[lem]{Remark}
\newtheorem{ass}[lem]{Assumption}
\numberwithin{equation}{section}
\setlist[itemize,1]{leftmargin=\dimexpr 22pt}
\makeatletter
\def\namedlabel#1#2{\begingroup
    #2%
    \def\@currentlabel{#2}%
    \phantomsection\label{#1}\endgroup
}
\makeatother
\newcommand{\N}{\mathbb{N}}
\newcommand{\Z}{\mathbb{Z}}
\newcommand{\R}{\mathbb{R}}

\newcommand{\J}{\mathbb{J}}
\newcommand{\reg}{\mathrm{r}}
\newcommand{\dif}{\mathrm{d}}
\newcommand{\calf}{\mathcal F}
\newcommand{\calh}{\mathcal H}
\newcommand{\cali}{\mathcal I}
\newcommand{\calj}{\mathcal J}
\newcommand{\cals}{\mathcal S}
\newcommand{\calt}{\mathcal T}
\newcommand{\norm}[1]{\left\Vert#1\right\Vert}
\newcommand{\abs}[1]{\left\vert#1\right\vert}
\newcommand{\eps}{\varepsilon}
\newcommand{\emb}{\hookrightarrow}
\newcommand{\dom}{\operatorname{dom}}
\renewcommand{\geq}{\geqslant}
\renewcommand{\leq}{\leqslant}
\usepackage{footmisc}
\makeatletter
\long\def\@makefntext#1{\indent#1}
\makeatother
\begin{document}
\title[Solutions of Duhamel's formula with a sum of mappings]{Solutions of Duhamel's formula\\with a sum of mappings}

\author{Rados{\l}aw Czaja$^{1}$}
\author{Maria Kania$^{2,*}$}

\address{$^{1,2}$Institute of Mathematics, University of Silesia in Katowice,
Bankowa 14, 40-007 Katowice, Poland.
$^{1}$\textit{E-mail address}: \textup{radoslaw.czaja@us.edu.pl}
$^{2}$\textit{E-mail address}: \textup{maria.kania@us.edu.pl}
\newline
$^1$ORCID: 0000-0003-2245-2916 \quad $^2$ORCID: 0000-0001-8881-9421\quad $^{*}$\textnormal{Corresponding author}
}

\subjclass[2020]{Primary 35K58; Secondary 35K90, 35K30, 35G25, 35Q92}
\keywords{Semilinear parabolic equations, Duhamel's formula, initial value problems for higher order parabolic equations, viscous Cahn-Hilliard equation, cell population dynamics, Schr\"odinger equation}

\begin{abstract}
Semilinear evolution equations are solved by means of the variation of constants formula 
under the action of smoothing linear operators and a finite number of nonlinearities operating between chosen Banach spaces of a given family. Admissible spaces of initial conditions are determined for the existence and uniqueness of a suitable notion of solution. Its maximal time of existence, short time and blow up time profiles, global extendibility and regularity are analyzed. For extrapolated fractional power scale of Banach spaces the fulfillment of the corresponding Cauchy problem is shown. Chosen applications to the modified viscous Cahn-Hilliard equation in $\R^N$, the evolution of cell population of varying genotype and the semilinear Schr\"odinger equation are presented.   
\end{abstract}

\dedicatory{Dedicated to Professor Jan Cholewa on the occassion of his 60th birthday} 

\vspace*{-3mm}
\maketitle
\section{Introduction} 

In the thirties of the nineteenth century Jean-Marie Duhamel applied the variation of constants formula to the inhomogeneous 
heat transfer equation. This approach is also used to solve semilinear partial differential equations by looking for a fixed point of the corresponding Duhamel formula. The classical way of solving semilinear parabolic equations (see \cite{HE,C-D}) is based on its formulation as an abstract evolution equation 
\begin{equation}\label{e:ROWN}
\dot{u}+Au=\calf(u),\ t>0,
\end{equation}
in a base Banach space $X^0$, with a positive sectorial operator $A$ in $X^0$ in the main linear part and requires that the $X^0$-valued nonlinear part $\calf$ is Lipschitz continuous on bounded subsets of $D(A^\alpha)=X^\alpha$, $\alpha\geq 0$, being a fractional power space corresponding to $A$ provided that the difference of exponents on the scale, equal to $\alpha$, is less than $1$. This requirement 
guarantees that the integral in Duhamel's formula
\begin{equation}\label{e:DUH}
u(t)=S(t)u_0+\int_{0}^{t}S(t-s)\calf(u(s))ds,\ t\in(0,\tau),
\end{equation} 
is well defined, since $-A$ generates a $C^0$ analytic linear semigroup $S(t)=e^{-At}$ for $t\geq 0$ on $X^0$, which immediately smooths elements of $X^0$ to $X^\alpha$ and satisfies the estimate
\begin{equation}\label{e:ANALYTIC}
t^\alpha\norm{S(t)}_{\mathcal{L}(X^0;X^\alpha)}\leq c_\alpha e^{-at},\ t>0,
\end{equation}
with a positive constant $a$. Therefore, for each initial data $u_0$ taken from the phase space $X^\alpha$, there exists a unique local solution $u\in C([0,\tau);X^\alpha)$ of \eqref{e:DUH} starting at $u(0)=u_0$. In fact, the solution has appropriate regularity properties and realizes the abstract differential equation \eqref{e:ROWN} in the space $X^0$. 

This approach can be generalized, since such operators define a~scale of Banach spaces (see e.g. \cite{AM, Las}). In the case of extrapolated scale, the operator $A$ gives rise to Banach spaces $E^\sigma$ for $\sigma\in\J=[-1,\infty)$ (for more complete theory see \cite{AM}), which are isometrically isomorphic to the above fractional power spaces for $\sigma\geq 0$. Thus one can choose $E^\beta$ as the base space and $E^\alpha$ as the phase space with $\alpha,\beta\geq -1$ still such that $\alpha-\beta\in[0,1)$ (cp. \cite{CzD}). 

In the meantime, the authors of \cite{AC1999} considered nonlinearities $\calf$ called $\eps$-regular maps and studied $\eps$-regular solutions, which admitted initial conditions from a broader space than the domain of $\calf$ by taking into consideration the behavior of a~solution to Duhamel's formula near $t=0$. This topic was further pursued in \cite{Q15, CHQRB17}, where the theory of $\eps$-regular solutions was further generalized to solutions of \eqref{e:DUH} with a smoothing linear semigroup $\{S(t)\}_{t\geq 0}$ on a general scale of Banach spaces $\{E^\sigma\}_{\sigma\in\J}$ with nonlinearity acting between $E^\alpha$ and $E^\beta$ with $0\leq\alpha-\beta<1$ and satisfying a specific local Lipschitz condition with growth exponent $\rho\geq 1$,
\begin{equation*}
\norm{\calf(\phi)-\calf(\psi)}_{E^{\beta}}\leq c_0\norm{\phi-\psi}_{E^{\alpha}}(1+\norm{\phi}_{E^{\alpha}}^{\rho-1}+\norm{\psi}_{E^{\alpha}}^{\rho-1}),\ \phi,\psi\in E^{\alpha}.
\end{equation*}
This allowed to determine admissible spaces $E^\gamma$ of initial conditions for which a~reasonable notion of a solution to Duhamel's formula, called in \cite{Q15, CHQRB17} a $\gamma$-solution, can be established by considering its behavior near $t=0$.

In \cite{ACRB1999}, still in the framework of $\eps$-regular solutions, the authors emphasized importance of considering the nonlinearity $\calf$ as a sum of a finite number of $\eps_i$-regular maps $\calf_i$. Such a decomposition better describes the nonlinearity, since we examine the behavior of each part individually within the given family of Banach spaces. 
Moreover, this is a common situation in applications, since the right-hand side of the differential equation reflects different aspects of the studied phenomenon and may take into account friction, air drag, interactions between species, etc., depending on the context. 
 
In this paper we generalize the results of \cite{AC1999, ACRB1999, Q15, CHQRB17}. First, we assume that $\{E^\sigma\}_{\sigma\in\J}$ is a given family of Banach spaces normed by $\|\cdot\|_{E^\sigma}$, where $\J$ is merely a set of indices and not necessarily an interval.  To each space $E^\sigma$, $\sigma\in\J$, we associate a certain real number $\reg(\sigma)$ called \emph{regularity index} and denote $\dif(\sigma,\xi)=\reg(\xi)-\reg(\sigma)$ if $\reg(\xi)\geq\reg(\sigma)$ for $\sigma,\xi\in\J$. Throughout the paper we assume that the spaces are \emph{topologically consistent}, that is, 
\begin{enumerate}
\item[\namedlabel{a:B1}{(A1)}] for every $m\in\N$ and $\sigma_1,\ldots,\sigma_m\in\mathbb{J}$ if $\{u_k\colon k\in\N\} \subset E^{\sigma_1}\cap\ldots\cap E^{\sigma_m}$ and the sequence $(u_k)$ converges in each $E^{\sigma_i}$ then the limit is the same.  
\end{enumerate}
We also distinguish a subset $\J_0$ of $\J$ and consider a family of mappings $S(t)$ for $t>0$ defined on each $E^\sigma$, $\sigma\in\J_0$, in a~consistent way, i.e.,
\begin{enumerate}
\item[\namedlabel{a:B2}{(A2)}] for every $m\in\N$ and $\sigma_1,\ldots,\sigma_m\in\J_0$ if $u\in E^{\sigma_1}\cap\ldots\cap E^{\sigma_m}$ then the value $S(t)u$ coincides for all maps $S(t)$ defined on $E^{\sigma_i}$.  
\end{enumerate}

Under the standing hypotheses \ref{a:B1} and \ref{a:B2}, we consider the situation, as in \cite{ACRB1999}, when the nonlinearity $\calf$ is a sum of a finite number of nonlinear mappings acting between given pairs of spaces from the family $\{E^\sigma\}_{\sigma\in\J}$. Let $\cali$ denote a given nonempty finite set of indices and let $\abs{\cali}\geq 1$ be its cardinality.
We further assume that $\calf_i\colon E^{\alpha_i}\to E^{\beta_i}$ with $\beta_i\in \J_0$, $\alpha_i\in\J$ for $i\in\cali$ satisfy
\begin{equation}\label{e:EPSREG}
\norm{\calf_i(\phi)-\calf_i(\psi)}_{E^{\beta_i}}\leq c_0\norm{\phi-\psi}_{E^{\alpha_i}}(1+\norm{\phi}_{E^{\alpha_i}}^{\rho_i-1}+\norm{\psi}_{E^{\alpha_i}}^{\rho_i-1}),\ \phi,\psi\in E^{\alpha_i},
\end{equation}
for some $c_0>0$ and  $\rho_i\geq 1$ for $i\in\cali$. Moreover, without loss of generality, we assume that $c_0>0$ is such that 
\begin{equation}\label{e:EPSREG2}
\norm{\calf_i(\phi)}_{E^{\beta_i}}\leq c_0(\norm{\phi}_{E^{\alpha_i}}^{\rho_i}+1),\ \phi\in E^{\alpha_i}.
\end{equation}

Our aim is to find solutions of the integral equation corresponding to Duhamel's formula
\begin{equation}\label{e:VCF}
u(t)=S(t)u_0+\sum_{i\in\cali}\int_{0}^{t}S(t-s)\calf_i(u(s))ds\ \text{ for }\ t\in\calt_\tau, 
\end{equation}   
where $u_0$ belongs to $E^\gamma$ for some $\gamma\in\J_0$ to be specified later, and the interval $\calt_\tau$ takes one of the forms:
$\calt_\tau=(0,\tau)$ with $0<\tau\leq\infty$ or $\calt_\tau=(0,\tau]$ with $0<\tau<\infty$. 

In the argument we will exploit a property of the operators $S(t)$, which we call \emph{smoothing for positive times between $E^\sigma$ and $E^\xi$} in resemblance of \eqref{e:ANALYTIC}.

\begin{defn}\label{def:SMOOTHS}
Given $\sigma\in\J_0$, $\xi\in\J$, we say that \emph{the family $\{S(t)\}_{t>0}$ smooths from $E^\sigma$ to $E^\xi$ for positive times}, which we denote 
$\sigma\leadsto\xi$, whenever both following properties hold:
\begin{itemize}
\item[(i)] $S(t)\in\mathcal{L}(E^\sigma;E^\xi)$ for all $t>0$, $\dif(\sigma,\xi)=\reg(\xi)-\reg(\sigma)\geq 0$ and
\begin{equation}\label{e:LINEAREST}
t^{\dif(\sigma,\xi)}\norm{S(t)u}_{E^\xi}\leq M(\sigma,\xi,T)\norm{u}_{E^{\sigma}}\ \text{for all}\ 0<t\leq T<\infty,\ u\in E^{\sigma},
\end{equation}
where $M(\sigma,\xi,T)>0$ is a constant which can be chosen nondecreasing w.r.t. $T$,
\item[(ii)] The map $(0,\infty)\times E^\sigma \ni(t,u)\mapsto S(t)u\in E^\xi$ is continuous.
\end{itemize}
\end{defn} 

To prove continuity of solutions we will need semigroup property of the family $\{S(t)\}_{t>0}$.
\begin{defn}
The family $\{S(t)\}_{t>0}$ is a \emph{semigroup} on $E^{\sigma}$ with $\sigma\in\J_0$ if $S(t)\colon E^{\sigma}\to E^{\sigma}$ for $t>0$ and $S(s)S(t)u=S(s+t)u$ for all $s,t>0$ and $u\in E^{\sigma}$.
\end{defn}

In order to give meaning to the right-hand side of \eqref{e:VCF}, we assume that 
the family $\{S(t)\}_{t>0}$ smooths for positive times from $E^{\beta_i}$ to $E^{\alpha_j}$ for all $i,j\in\cali$, that is,
\begin{equation}\label{e:SETUP1}
\beta_i\leadsto\alpha_j\ \text{ for any }i,j\in\cali.
\end{equation}
and, given $\gamma\in\J_0$, from $E^\gamma$ to $E^{\alpha_j}$ for all $j\in\cali$, i.e., 
\begin{equation}\label{e:SETUP2} 
\gamma\leadsto\alpha_j\ \text{ for any }j\in\cali.
\end{equation}
In particular, we have $\dif(\beta_i,\alpha_j)\geq0$ and $\dif(\gamma,\alpha_j)\geq 0$ for all $i,j\in\cali$.

Following~\cite{RB11,Q15,CHQRB17}, for $\sigma\in\J$, $\theta\geq 0$ and $T>0$, we consider the set
\begin{equation*}
\mathcal{L}^\infty_\theta((0,T];E^\sigma):=\{u\in L^\infty_{loc}((0,T];E^\sigma)\colon \|u\|_{\sigma,\theta,T}:=\sup_{t\in(0,T]}t^\theta\|u(t)\|_{E^\sigma}<\infty\},
\end{equation*}
which is a Banach space with the norm $\|\cdot\|_{\sigma,\theta,T}$.
We use this space to define the notion of a $\gamma$-solution of \eqref{e:VCF} by specifying 
its growth near zero. 

\begin{defn}\label{defn:GAMMASOL}
For $u_0\in E^\gamma$ a function $u\colon\calt_{\tau}\to \bigcap\limits_{i\in\cali}E^{\alpha_i}$ is a $\gamma$-solution of \eqref{e:VCF} if for any $T\in\calt_\tau$ we have 
$u\in \bigcap\limits_{i\in\cali}\mathcal{L}^\infty_{\dif(\gamma,\alpha_i)}((0,T];E^{\alpha_i})=:K(T)$
and $u$ satisfies \eqref{e:VCF} on $\calt_\tau$.
\end{defn}

With the notation
\begin{equation}\label{e:NORMALPHAIT}
\norm{u}_{\alpha_i,T}:=\norm{u}_{\alpha_i,\dif(\gamma,\alpha_i),T}\text{ for }i\in\cali, 
\end{equation}
the space $K(T)$ becomes a~Banach space endowed with the norm
$$\norm{u}_{K(T)}:=\max_{i\in\cali}\|u\|_{\alpha_i,T}\ \text{ for }\ u\in K(T),$$
since for every $i\in\cali$ the space $\mathcal{L}^\infty_{\dif(\gamma,\alpha_i)}((0,T];E^{\alpha_i})$ is a~Banach space and the spaces of the family $\{E^\sigma\}_{\sigma\in\J}$ are topologically consistent by \ref{a:B1}.

We give conditions for the local existence and uniqueness of $\gamma$-solutions of Duhamel's formula and prove the following result in Section~\ref{sec:EXIST}. 

\begin{thm}\label{thm:MAIN1}
Let $\calf_{i}$ satisfy \eqref{e:EPSREG}, \eqref{e:EPSREG2} with $c_0>0$, $\rho_i\geq 1$, $\alpha_i\in\J$, $\beta_i\in\J_0$ for $i\in\cali$ such that \eqref{e:SETUP1} and 
\begin{equation}\label{e:CONDALPHAJ}
\dif(\beta_i,\alpha_j)<1\ \text{ for all}\ i,j\in\cali.
\end{equation}
Assume that $\gamma\in\J_0$ satisfies \eqref{e:SETUP2} and
\begin{equation}\label{e:POSITIVEMUIORAZOMEGAINTRO}
\rho_i\dif(\gamma,\alpha_i)<\min\{1,1+\reg(\beta_i)-\reg(\gamma)\}\ \text{ for all}\ i\in\cali.
\end{equation}
Then, given $R>0$, there exists $0<\tau<\infty$ such that for any $u_0\in E^\gamma$ with $\norm{u_0}_{E^\gamma}\leq R$ there exists a~locally unique $\gamma$-solution $u$ to \eqref{e:VCF} on $(0,\tau]$, i.e.,
$$u\in K(\tau)=\bigcap_{i\in\cali}\mathcal{L}^\infty_{\dif(\gamma,\alpha_i)}((0,\tau];E^{\alpha_i})$$
and $u$ satisfies Duhamel's formula \eqref{e:VCF} on $(0,\tau]$. Moreover, there exists $L>0$ such that if $\norm{u_0}_{E^\gamma}\leq R$, $\norm{v_0}_{E^\gamma}\leq R$ then we have
\begin{equation*}
\norm{u-v}_{K(\tau)}\leq L\norm{u_0-v_0}_{E^{\gamma}},
\end{equation*}
where $u,v$ are $\gamma$-solutions on $(0,\tau]$ corresponding to $u_0,v_0$, respectively. We also have
\begin{equation}\label{e:CONVUTOZERO}
\norm{u}_{K(t)}\to 0\ \text{ as }t\to 0^{+}\ \text{ if }\ \norm{S(\cdot)u_0}_{K(t)}\to 0\ \text{ as }t\to 0^{+}.
\end{equation}
If, additionally, $\{S(t)\}_{t>0}$ is a semigroup on $E^{\gamma}$ and on each $E^{\beta_i}$ for $i\in\cali$, then the above $u$ is unique and
\begin{itemize}
\item[(i)] $u\in\bigcap\limits_{i\in\cali}C((0,\tau];E^{\alpha_i})$,
\item[(ii)] $u\in C((0,\tau];E^\gamma)$ provided that
\begin{equation}\label{e:TOGAMMA}
\gamma\leadsto\gamma\ \text{ and }\ \beta_i\leadsto\gamma\ \text{ for all }i\in\cali,
\end{equation}
\item[(iii)] $u\in C([0,\tau];E^\gamma)$ with $u(0)=u_0$ provided that \eqref{e:TOGAMMA} holds and
\begin{equation}\label{e:STC0}
\lim\limits_{t\to 0^+}\|S(t)u_0-u_0\|_{E^{\gamma}}=0.
\end{equation} 
\end{itemize}
\end{thm}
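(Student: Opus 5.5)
The plan is a Banach fixed point argument in a closed ball of $K(\tau)$. For $u_0\in E^\gamma$ with $\norm{u_0}_{E^\gamma}\leq R$ I will define
\[
\Phi(u)(t)=S(t)u_0+\sum_{i\in\cali}\int_0^tS(t-s)\calf_i(u(s))ds .
\]
The first step is to check that $\Phi$ maps $K(\tau)$ into itself. By \eqref{e:SETUP2} and \eqref{e:LINEAREST}, $\norm{S(\cdot)u_0}_{\alpha_j,\tau}\leq M(\gamma,\alpha_j,\tau)R$. For the integral term with values in $E^{\alpha_j}$, I use \eqref{e:SETUP1} and \eqref{e:EPSREG2} to get
\[
t^{\dif(\gamma,\alpha_j)}\int_0^t (t-s)^{-\dif(\beta_i,\alpha_j)}c_0\bigl(s^{-\rho_i\dif(\gamma,\alpha_i)}\norm{u}_{\alpha_i,\tau}^{\rho_i}+1\bigr)ds .
\]
This integral converges because $\dif(\beta_i,\alpha_j)<1$ and $\rho_i\dif(\gamma,\alpha_i)<1$. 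It equals a Beta-function constant times
\[
t^{\,1+\dif(\gamma,\alpha_j)-\dif(\beta_i,\alpha_j)-\rho_i\dif(\gamma,\alpha_i)} .
\]
The exponent simplifies to $1+\reg(\beta_i)-\reg(\gamma)-\rho_i\dif(\gamma,\alpha_i)$, which is positive by \eqref{e:POSITIVEMUIORAZOMEGAINTRO}. The constant term gives $t^{1+\reg(\beta_i)-\reg(\gamma)}$, whose exponent is even larger. The integrand is measurable and locally bounded by the continuity in Definition~\ref{def:SMOOTHS}(ii); the integral is a Bochner integral, and the convergence above shows it exists.

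Next, the same computation with \eqref{e:EPSREG} gives the contraction estimate
\[
\norm{\Phi(u)-\Phi(v)}_{K(\tau)}\leq C\tau^{\mu}\bigl(1+\norm{u}^{\rho-1}_{K(\tau)}+\norm{v}^{\rho-1}_{K(\tau)}\bigr)\norm{u-v}_{K(\tau)} .
\]
Here $\mu>0$ is the minimum of the exponents above, $\rho=\max_i\rho_i$, and $\tau\leq 1$ is used to unify the powers. Take the ball of radius $2\max_jM(\gamma,\alpha_j,1)R$ and choose $\tau$ depending only on $R$ so small that $\Phi$ maps the ball into itself and has Lipschitz constant $\leq 1/2$. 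This yields local existence and local uniqueness (uniqueness in the ball). Comparing the fixed points for $u_0$ and $v_0$ gives
\[
\norm{u-v}_{K(\tau)}\leq \max_j M\norm{u_0-v_0}_{E^\gamma}+\tfrac12\norm{u-v}_{K(\tau)} ,
\]
hence the Lipschitz dependence with $L=2\max_jM$. For \eqref{e:CONVUTOZERO}, restrict the estimates to $(0,t]$:
\[
\norm{u}_{K(t)}\leq\norm{S(\cdot)u_0}_{K(t)}+Ct^\mu\bigl(\norm{u}_{K(t)}^{\rho_i}+1\bigr),
\]
and $\norm{u}_{K(t)}$ is bounded, so the right side tends to $0$.

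\textbf{Uniqueness under the semigroup property.} Here the hardest step is to show that any two $\gamma$-solutions on $(0,\tau]$ coincide, not only those in the ball. First, any $\gamma$-solution $v$ satisfies $\norm{v}_{K(t)}\to0$ as $t\to0^+$ whenever $\norm{u}_{K(t)}\to0$ does. When that is not automatic, I will instead use the estimate on $\norm{u-v}_{K(t)}$ for small $t$: the bound $\norm{u}_{K(t)},\norm{v}_{K(t)}\leq\norm{u}_{K(\tau)}+\norm{v}_{K(\tau)}$ holds uniformly, so the factor $Ct^\mu(\cdots)$ is $<1$ for small $t$, forcing $u=v$ on $(0,t_0]$. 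Then I continue by translation: for $t>t_0$, the semigroup property on $E^\gamma$ and on each $E^{\beta_i}$, together with the consistency \ref{a:B2}, gives
\[
u(t)=S(t-t_0)u(t_0)+\sum_i\int_{t_0}^tS(t-s)\calf_i(u(s))ds .
\]
On $[t_0,\tau]$ both solutions are bounded in each $E^{\alpha_i}$, with no singularity now, so a Gronwall-type singular inequality (or a repeated small-step contraction) gives $u=v$ on $[t_0,\tau]$.

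\textbf{Regularity.} For (i), continuity in $E^{\alpha_j}$ at $t_1>0$ follows by writing $u(t)$ for $t$ near $t_1$ via the semigroup property as
\[
S(t-t_1+\delta)u(t_1-\delta)+\int_{t_1-\delta}^t\cdots ,
\]
and using the joint continuity in Definition~\ref{def:SMOOTHS}(ii) together with smallness of the integral over short intervals and dominated convergence. For (ii), repeat the argument with target $E^\gamma$, using \eqref{e:TOGAMMA}; the integral is finite since
\[
\dif(\beta_i,\gamma)=\reg(\gamma)-\reg(\beta_i)<1-\rho_i\dif(\gamma,\alpha_i)\leq1 .
\]
For (iii), $\norm{u(t)-u_0}_{E^\gamma}\leq\norm{S(t)u_0-u_0}_{E^\gamma}+\sum_iCt^{1+\reg(\beta_i)-\reg(\gamma)-\rho_i\dif(\gamma,\alpha_i)}(\cdots)$, which tends to $0$ by \eqref{e:STC0} and \eqref{e:POSITIVEMUIORAZOMEGAINTRO}.
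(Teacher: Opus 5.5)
Most of your argument follows the paper's own proof. The parts that match are:
\begin{itemize}
\item the fixed point of $H(\cdot,u_0)$ in a ball of $K(\tau)$, using the exponents $\omega_i$ and $1-\dif(\beta_i,\alpha_i)$ (Lemmas~\ref{lem:BASIC0}, \ref{lem:BASIC1} and Theorem~\ref{thm:EXIST});
\item the Lipschitz bound from the $\tfrac12$-contraction;
\item local uniqueness in time, by bounding $\norm{u-v}_{K(t)}$ with the $K(\tau)$-norms of both solutions (the paper's Step~3);
\item a singular Gronwall argument afterwards (Theorem~\ref{thm:UNIQUENESS});
\item the estimate of Proposition~\ref{prop:PROPERTIES}(ii) with $\delta=\gamma$ for (iii).
\end{itemize}
Your time-local uniqueness argument uses no semigroup property. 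It is therefore what actually gives ``locally unique'' in the first part of the statement.

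There is, however, a gap in your proof of (i). It rests on the shifted formula
$u(t)=S(t-t_0)u(t_0)+\sum_{i\in\cali}\int_{t_0}^tS(t-s)\calf_i(u(s))ds$,
which is not available under the hypotheses of (i).
\begin{itemize}
\item You only know $u(t_0)\in\bigcap_{i\in\cali}E^{\alpha_i}$. The $\alpha_i$ need not lie in $\J_0$, so $S(\cdot)$ need not even be defined on $u(t_0)$.
\item Without \eqref{e:TOGAMMA}, nothing places $u(t_0)$ in $E^\gamma$.
\item To pull $S(t-t_0)$ out of $\int_0^{t_0}S(t_0-s)\calf_i(u(s))ds$, that integral must converge in a space on which $S(t-t_0)$ is bounded and smooths into $E^{\alpha_j}$. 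Using $E^{\beta_i}$ would need bounds on $S(\cdot)$ in $E^{\beta_i}$ such as $\beta_i\leadsto\beta_i$, which are not assumed.
\end{itemize}
The paper proves this identity only in Lemma~\ref{lem:SHIFT}, under \eqref{e:TOGAMMA}. There the integral converges in $E^\gamma$ and $S(t)\in\mathcal{L}(E^\gamma;E^\gamma)$. So your (ii) and (iii) are fine, but (i) is not.

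The fix is to work with the unshifted Duhamel formula, as in Lemma~\ref{lem:REGH2}:
\begin{itemize}
\item $S(t)u_0\to S(t_1)u_0$ in $E^{\alpha_j}$ by Definition~\ref{def:SMOOTHS}(ii) for $\gamma\leadsto\alpha_j$.
\item The integral over the interval between $t_1$ and $t$ is small by \eqref{e:INTEGRALBASIC0}.
\item $\int_0^{t_1}(S(t-s)-S(t_1-s))\calf_i(u(s))ds\to 0$ in $E^{\alpha_j}$ as $t\to t_1^+$ by dominated convergence. Pointwise convergence comes from the continuity of $t\mapsto S(t)w$ for $w\in E^{\beta_i}$, and a majorant is $2c_0M(\beta_i,\alpha_j,T)(t_1-s)^{-\dif(\beta_i,\alpha_j)}\bigl(1+\norm{u}_{\alpha_i,T}^{\rho_i}s^{-\rho_i\dif(\gamma,\alpha_i)}\bigr)$.
\item For $t\to t_1^-$, cut off at $t_1-\eta$ first.
\end{itemize}

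The same shifted formula appears in your uniqueness continuation, but there it is unnecessary. Subtract the two original Duhamel formulas: the terms $S(t)u_0$ cancel, and the integrals over $(0,t_0]$ cancel because $u=v$ there. This gives directly
$u(t)-v(t)=\sum_{i\in\cali}\int_{t_0}^tS(t-s)\bigl(\calf_i(u(s))-\calf_i(v(s))\bigr)ds$,
which is how the paper obtains $z$ in Theorem~\ref{thm:UNIQUENESS}. Boundedness on $[t_0,\tau]$ then follows from $\norm{u(s)}_{E^{\alpha_i}}\leq t_0^{-\dif(\gamma,\alpha_i)}\norm{u}_{\alpha_i,\tau}$.
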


In Theorem~\ref{thm:MAIN1} we require, in particular, that 
\begin{equation}\label{e:POSITIVEOMEGA}
\omega_i:=1-\rho_i\dif(\gamma,\alpha_i)+\reg(\beta_i)-\reg(\gamma)>0\ \text{ for all}\ i\in\cali,
\end{equation} 
which corresponds to the so-called subcritical case (cp.~\cite{ACRB1999,CHQRB17}). The existence and uniqueness of $\gamma$-solutions in the critical case, when some $\omega_i$'s may vanish, is shown in Theorem~\ref{thm:EXISTCRITICAL}.

Staying in the subcritical case, we then provide an estimate from below for the existence time of a solution (see $\tau(u_0)$ in \eqref{e:time}) and prove that each local $\gamma$-solution extends uniquely to a $\gamma$-solution defined on 
$(0,\tau_{u_0})$ with the maximal time of existence $\tau_{u_0}$. For that purpose we need \eqref{e:TOGAMMA}, which simplifies \eqref{e:POSITIVEMUIORAZOMEGAINTRO} to \eqref{e:POSITIVEOMEGA}. This allows us to study the long time behavior of solutions in the space $E^\gamma$.

\begin{cor}\label{cor:EXIST}
Let $\calf_{i}\colon E^{\alpha_i}\to E^{\beta_i}$, $i\in\cali$, satisfy \eqref{e:EPSREG}, \eqref{e:EPSREG2} and let \eqref{e:SETUP1}, \eqref{e:CONDALPHAJ} hold. Assume that $\gamma\in\J_0$ satisfies \eqref{e:SETUP2}, \eqref{e:TOGAMMA}, \eqref{e:POSITIVEOMEGA} and let $\{S(t)\}_{t>0}$ be a~semigroup on $E^{\gamma}$ and on each $E^{\beta_i}$ for $i\in\cali$. Then for any $u_0\in E^\gamma$ there exists a~unique $\gamma$-solution $u(\cdot,u_0)$ of \eqref{e:VCF} defined on $\calt_{\tau_{u_0}}=(0,\tau_{u_0})$ with the maximal time of existence $\tau_{u_0}\in(\tau(u_0),\infty]$ and satisfying 
\begin{equation}\label{e:HOWREGULAR}
u(\cdot,u_0)\in 
\bigcap_{i\in\cali}C((0,\tau_{u_0});E^\gamma\cap E^{\alpha_i})\cap
\bigcap_{T\in(0,\tau_{u_0})}K(T).
\end{equation}
Moreover, if $\tau_{u_0}<\infty$ then
\begin{equation}\label{e:LIMSUP}
\lim_{t\to\tau_{u_0}^{-}}\norm{u(t,u_0)}_{E^{\gamma}}=\infty.
\end{equation}	
If $u_0\in E^\gamma$ satisfies \eqref{e:STC0} then $u\in C([0,\tau_{u_0});E^\gamma)$ with $u(0)=u_0$.
\end{cor}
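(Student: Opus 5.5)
The plan is to build the maximal solution from Theorem~\ref{thm:MAIN1} by restarting it from positive times, and then to obtain blow up by contradiction. Under \eqref{e:TOGAMMA} we have $\dif(\gamma,\gamma)=0$ and $\dif(\beta_i,\gamma)\geq 0$, so $\reg(\beta_i)\geq\reg(\gamma)$. Hence $1+\reg(\beta_i)-\reg(\gamma)\geq 1$, and \eqref{e:POSITIVEMUIORAZOMEGAINTRO} reduces to \eqref{e:POSITIVEOMEGA}; in particular $\rho_i\dif(\gamma,\alpha_i)<1$. So Theorem~\ref{thm:MAIN1}, including (i), (ii) and uniqueness, applies to every initial value in $E^\gamma$. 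For $u_0$ given, we take $R=\norm{u_0}_{E^\gamma}$ (or $R=\norm{u_0}_{E^\gamma}+1$), let $\tau(u_0)$ be the time from the theorem, and get a unique $\gamma$-solution on $(0,\tau(u_0)]$ in $C((0,\tau(u_0)];E^\gamma\cap E^{\alpha_i})$.

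\textbf{Step 1: a gluing lemma.} Let $u$ be a $\gamma$-solution on $(0,T]$, and let $v$ be a $\gamma$-solution on $(0,\tau']$ with initial value $u(T)\in E^\gamma$. Set $w=u$ on $(0,T]$ and $w(t)=v(t-T)$ on $(T,T+\tau']$. We need to show that $w$ is a $\gamma$-solution on $(0,T+\tau']$.
\begin{itemize}
\item For $t>T$, write $S(t)u_0=S(t-T)S(T)u_0$, and split $\int_0^t=\int_0^T+\int_T^t$. In the first piece we use $S(t-s)=S(t-T)S(T-s)$ on $E^{\beta_i}$ and pull $S(t-T)$ out of the integral.
\item Pulling it out requires the integrals to converge as Bochner integrals in $E^{\gamma}$. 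This follows from $\beta_i\leadsto\gamma$, which makes $S(t-T)$ bounded and continuous from $E^{\beta_i}$ into $E^\gamma$, and from the consistency assumptions \ref{a:B1} and \ref{a:B2}.
\item Together these give $w(t)=S(t-T)u(T)+\sum_i\int_0^{t-T}S(t-T-s)\calf_i(v(s))\,ds=v(t-T)$.
\item Membership of $w$ in $K(T+\tau')$ follows because $v$ is bounded in each $E^{\alpha_i}$ away from $0$, while near $t=T$ the factor $s^{\dif(\gamma,\alpha_i)}$ in the norm of $v$ is harmless once it is multiplied by $t^{\dif(\gamma,\alpha_i)}$ with $t\approx T$. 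This needs some care: one should use that $u$ is continuous on $[T/2,T]$ and reapply the smoothing estimate.
\end{itemize}

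\textbf{Step 2: the maximal solution.} Uniqueness on each interval, from Theorem~\ref{thm:MAIN1}, shows that any two $\gamma$-solutions agree on their common domain. Let $\tau_{u_0}$ be the supremum of the $T$ for which a $\gamma$-solution exists on $(0,T]$, and define $u$ as the union of these solutions. The regularity \eqref{e:HOWREGULAR} follows from (i) and (ii) applied on each $(0,T]$. Since a restart from $u(\tau(u_0))$ extends the solution, $\tau_{u_0}>\tau(u_0)$.

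\textbf{Step 3: blow up, which I expect to be the main obstacle.} Suppose $\tau_{u_0}<\infty$ and \eqref{e:LIMSUP} fails. Then there are $R>0$ and $t_n\to\tau_{u_0}^-$ with $\norm{u(t_n)}_{E^\gamma}\leq R$. Theorem~\ref{thm:MAIN1} gives a uniform $\tau_R>0$, so restarting at some $t_n>\tau_{u_0}-\tau_R$ and gluing as in Step 1 yields a solution beyond $\tau_{u_0}$, a contradiction.

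This argument only gives $\limsup_{t\to\tau_{u_0}^-}\norm{u(t)}_{E^\gamma}=\infty$. To upgrade it to a full limit, I would first try a continuity argument. Suppose $\norm{u(s_n)}_{E^\gamma}\leq R$ with $s_n\to\tau_{u_0}$. Restarting at $s_n$ gives a solution on $[s_n,s_n+\tau_{2R}]$, where $\tau_{2R}$ is the uniform time for the ball of radius $2R$. Its size stays below a bound controlled by $R$ through the $K$-norm estimate and the $\gamma$-to-$\gamma$ smoothing, because the fixed point lies in a ball whose radius depends on $R$. Once $s_n>\tau_{u_0}-\tau_{2R}$, this already passes $\tau_{u_0}$, again a contradiction. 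So in fact $\liminf_{t\to\tau_{u_0}^-}\norm{u(t)}_{E^\gamma}=\infty$, which gives the limit.

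\textbf{Step 4: continuity at $t=0$.} If \eqref{e:STC0} holds, part (iii) of Theorem~\ref{thm:MAIN1} gives continuity at $t=0$ with $u(0)=u_0$.
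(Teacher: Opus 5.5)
\textbf{The gap is in Step~1: you have not shown that the glued function $w$ lies in $K(T+\tau')$.} You glue at the right endpoint $T$ of the domain of $u$. There, the restarted $\gamma$-solution $v$ with initial value $u(T)$ is, by Definition~\ref{defn:GAMMASOL}, only known to satisfy $\norm{v(s)}_{E^{\alpha_i}}\leq Cs^{-\dif(\gamma,\alpha_i)}$. So for $t\downarrow T$ you only get
$t^{\dif(\gamma,\alpha_i)}\norm{w(t)}_{E^{\alpha_i}}\leq C\,T^{\dif(\gamma,\alpha_i)}(t-T)^{-\dif(\gamma,\alpha_i)}$, which is unbounded whenever $\dif(\gamma,\alpha_i)>0$. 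The weight $t^{\dif(\gamma,\alpha_i)}\approx T^{\dif(\gamma,\alpha_i)}$ compensates nothing, so the claim that the singularity is ``harmless'' is false.

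What you need is that $v$ stays bounded in each $E^{\alpha_i}$ as $s\to0^+$, and none of your tools gives this:
\begin{itemize}
\item Theorem~\ref{thm:MAIN1} says nothing about it.
\item Proposition~\ref{prop:PROPERTIES}~(ii) would need $\dif(\gamma,\alpha_i)<\min_j\omega_j$ and strong continuity of $S(\cdot)$ at $u(T)$ in $E^{\alpha_i}$.
\item There is no smoothing estimate out of $E^{\alpha_i}$ to ``reapply'': the $\alpha_i$ need not lie in $\J_0$, and no relation $\alpha_i\leadsto\alpha_j$ is assumed.
\end{itemize}
You could use $u(T)=S(\tfrac T2)u(\tfrac T2)+\sum_i\int_{T/2}^T S(T-s)\calf_i(u(s))\,ds$ to see that $S(\cdot)u(T)$ is bounded near $0$ in each $E^{\alpha_j}$. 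But one pass through Duhamel's formula for $v$ still leaves a term of order $s^{\omega_i-\dif(\gamma,\alpha_j)}$, and this exponent can be negative. A bootstrap, or an auxiliary fixed point in bounded functions plus uniqueness, would still be required. The same gap affects Step~2, where you restart from the endpoint value $u(\tau(u_0))$.

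\textbf{How the paper avoids this.} It never glues at the endpoint.
\begin{itemize}
\item By \eqref{e:TOGAMMA}, $u\in C((0,T];E^\gamma)$, so $\norm{u(s)}_{E^\gamma}\leq R$ on $[\epsilon,T]$. Theorem~\ref{thm:EXIST} then gives one existence time $\tau$ for all data $u(s)$, $s\in[\epsilon,T]$.
\item Take $\theta\in[\epsilon,T)$ with $T-\theta<\tau$. The solution $v_\theta$ from $u(\theta)$ coincides with $u(\cdot+\theta)$ on $(0,T-\theta]$, by Lemma~\ref{lem:SHIFT} and Theorem~\ref{thm:UNIQUENESS}.
\item Hence $v_\theta$ is continuous at $0$ in $E^\gamma\cap\bigcap_iE^{\alpha_i}$ and bounded on $[0,\tau]$.
\item Gluing $u|_{(0,\theta]}$ with $v_\theta(\cdot-\theta)$ then lies in $K(\tau+\theta)$ trivially, and $\tau+\theta>T$.
\end{itemize}
In your Step~3 you restart at interior points $t_n<\tau_{u_0}$, so this overlap argument is available there too. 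You have to invoke it, however, rather than Step~1.

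\textbf{Minor points.}
\begin{itemize}
\item $\beta_i\leadsto\gamma$ gives $\reg(\beta_i)\leq\reg(\gamma)$, not $\geq$. This sign makes the minimum in \eqref{e:POSITIVEMUIORAZOMEGAINTRO} equal to $1+\reg(\beta_i)-\reg(\gamma)$, so the condition reduces to \eqref{e:POSITIVEOMEGA} and implies $\rho_i\dif(\gamma,\alpha_i)<1$. With your sign the minimum would be $1$ and the reduction would go the other way.
\item In Step~3, negating \eqref{e:LIMSUP} already produces a bounded sequence $t_n\to\tau_{u_0}^-$. So your first argument proves the full limit, and the ``upgrade'' is redundant.
\item The paper argues differently at this point. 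It uses the explicit time $\tau(u_0)$ of \eqref{e:time} from Corollary~\ref{cor:TAUU0} together with $\tau_{u(t,u_0)}=\tau_{u_0}-t>\tau(u(t,u_0))$, obtaining the quantitative bound \eqref{e:timelife}, i.e.\ a blow-up rate. Your contradiction argument suffices for the statement but is purely qualitative.
\item The $\tau(u_0)$ in the statement is that explicit time, so you should take the local solution from Corollary~\ref{cor:TAUU0} rather than from a generic ball radius $R$.
\end{itemize}
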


In particular, the above setting encompasses extrapolated fractional power scales, see e.g. \cite{AM,Las,Q15,CzD}, which we briefly recall in Section~\ref{sec:FRACTIONAL}. Having a positive sectorial operator $A$ in a reflexive Banach space, the scale $E^\sigma=D((A_{-1})^{\sigma+1})$ for $\sigma\in\J=[-1,\infty)$ consists of reflexive Banach spaces, which are densely and continuously nested, where $A_{-1}$ in $E^{-1}$ denotes the extrapolated operator corresponding to $A$. The operator $-A_{-1}$ generates an analytic $C^{0}$ semigroup $\{S(t)\}_{t\geq0}$ in $E^{-1}$ with $S(0)$ being an identity in $E^{-1}$.   
In this case, the $\gamma$-solution $u(t,u_0)$, $t\in[0,\tau_{u_0})$, of Duhamel's formula \eqref{e:VCF} from Corollary~\ref{cor:EXIST} is differentiable in time and satisfies in $E^\beta$ with $\beta=\min\limits_{i\in\cali}\beta_i$ the differential equation
$$\dot{u}(t)+A_\beta u(t)=\sum_{i\in\cali}\calf_i(u(t)),\ t\in(0,\tau_{u_0}),$$
where $A_\beta$ is the realization of $A_{-1}$ in $E^\beta$; see Theorem~\ref{thm:EXTRAPOLATED} for details. 

In Section~\ref{sec:APPLICATIONS} we present a choice of applications of the above abstract approach to partial differential equations, which are of different types. In extrapolated power scales we solve the modified viscous Cahn-Hilliard equation in $\R^{N}$ 
\begin{equation*}
\dot{u}=(\delta-\Delta)(\Delta u+f(x,u) -\mu \dot{u}),\ t>0,\ x\in\R^{N}
\end{equation*}
with 
\begin{equation*}
f(x,s)=g(x)+m(x)s+\sum_{i=1}^{n}f_i(x,s),\ x\in\R^{N},\ s\in\R,
\end{equation*}
containing a mildly integrable function $m(\cdot)$ and a sum of nonlinear terms $f_i$ of various growth exponents. 

Next example concerns $2m$-th order equation, which describes time evolution of cell population density as a function of the cell genotype 
\begin{equation}\label{e:CELLINTRO}
\dot{u}+(-\Delta)^m u=(G\star f_1(\cdot,u))(x)+f_2(x,u),\ t>0,\ x\in\R^{N},
\end{equation}
with a given kernel $G\in L^{1}(\R^{N})$ in the nonlocal term defined by a convolution and with nonlinear functions $f_i$. We solve Duhamel's formula corresponding to the Cauchy problem for \eqref{e:CELLINTRO} within the family of Lebesgue spaces $E^p=L^p(\R^{N})$, $p\in[1,\infty]$, see Theorem~\ref{thm:CELL}. This family is not nested, but the linear semigroup has abundant $L^p-L^q$ estimates. In Theorem~\ref{thm:CELL2}
we also discuss $\gamma$-solutions for a variant of \eqref{e:CELLINTRO} with the nonlocal term replaced by the Hartree nonlinearity $(G\star|u|^2)u$.

Finally, we apply Theorem~\ref{thm:MAIN1} to the semilinear Schr\"odinger equation
\begin{equation*}
\dot{u}-\mathbbm{i}\Delta u=\sum_{i\in\cali}\calf_i(u),\ t>0,\ x\in\R^{N},
\end{equation*}
again in $L^p(\R^{N})$ spaces. Here the difficulty lies in the scarce available estimates for the linear semigroup. 

We also remark that in the~recent paper \cite{CHRB25} the above setting was considered with a~finite sum of \emph{linear} bounded mappings $\calf_i$ defined in a \emph{common} space, i.e., when $\alpha_i\equiv \alpha$ and $\rho_i\equiv 1$ for all $i\in\cali$.

The content of the paper is as follows. In Section~\ref{sec:PRELIMINARY} we examine the properties of the right-hand side of Duhamel's formula \eqref{e:VCF}. In Section~\ref{sec:EXIST} we prove the main results, Theorem~\ref{thm:MAIN1} and Corollary~\ref{cor:EXIST}, on the existence and uniqueness of $\gamma$-solutions of \eqref{e:VCF}. Section~\ref{sec:REGULARIZATION} contains a scheme to further regularize a $\gamma$-solution. In Section~\ref{sec:FRACTIONAL} we illustrate the theory in extrapolated fractional power scales and prove in Theorem~\ref{thm:EXTRAPOLATED} that in this case $\gamma$-solutions of Duhamel's formula actually solve a differential equation.
The final Section~\ref{sec:APPLICATIONS} is devoted to applications we have just outlined above.   

\section{Preliminary observations}\label{sec:PRELIMINARY}

Under the standing hypotheses  \ref{a:B1}, \ref{a:B2}, we make some preliminary observations concerning the second term on the right-hand side of Duhamel's formula. We recall the constant $M$ from \eqref{e:LINEAREST} and the norm $\norm{\cdot}_{\alpha,T}$ from \eqref{e:NORMALPHAIT}.

\begin{lem}\label{lem:BASIC0}
Let $\calf\colon E^\alpha\to E^\beta$ with $\beta\in\J_0$ and $\alpha\in\J$ satisfy
\begin{equation}\label{e:EPSREGBASIC0}
\norm{\calf(\phi)}_{E^\beta}\leq c_0(\norm{\phi}_{E^\alpha}^\rho+1),\ \phi\in E^\alpha,
\end{equation}
for some $c_0>0$ and $\rho\geq 1$ and assume that $\gamma\in\J$ satisfies 
\begin{equation}\label{e:GAMMAALPHA}
\dif(\gamma,\alpha)=\reg(\alpha)-\reg(\gamma)\geq0.
\end{equation}
Let $\delta\in\J$ be such that 
\begin{equation}\label{e:DELTABASIC0}
\beta\leadsto\delta\ \text{ and }\ \nu(\delta):=1-\dif(\beta,\delta)>0.
\end{equation}
Then, given $u\in\mathcal{L}^\infty_{\dif(\gamma,\alpha)}((0,T];E^{\alpha})$ with $0<T<\infty$, 
\begin{itemize}
\item[(i)]  for $0<t_1<t_2\leq t_3\leq T$ we have
\begin{equation}\label{e:INTEGRALBASIC0}
\int_{t_1}^{t_2}\|S(t_3-s)\calf(u(s))\|_{E^{\delta}}ds\leq c_0M(\beta,\delta,T)\tfrac{(t_2-t_1)^{\nu(\delta)}}{\nu(\delta)}\left(1+\|u\|_{\alpha,T}^{\rho}t_1^{\mu-1}\right),
\end{equation}
where $\mu:=1-\rho\dif(\gamma,\alpha)$.
\item[(ii)] if 
\begin{equation}\label{e:POSITIVEMU}
\mu=1-\rho\dif(\gamma,\alpha)>0,
\end{equation} 
for $0<t_2\leq t_3\leq T$ we have
\begin{equation}\label{e:INTEGRAL1BASIC0}
\int_{0}^{t_2}\|S(t_3-s)\calf(u(s))\|_{E^{\delta}}ds\leq c_0M(\beta,\delta,T)\Bigl(\tfrac{t_2^{\nu(\delta)}}{\nu(\delta)}+\|u\|_{\alpha,T}^{\rho}t_2^{\omega+\reg(\gamma)-\reg(\delta)}B(\nu(\delta),\mu)\Bigr),
\end{equation}
where 
\begin{equation}\label{e:DEFOMEGA}
\omega:=1-\rho\dif(\gamma,\alpha)+\reg(\beta)-\reg(\gamma)
\end{equation} 
and $B(\cdot,\cdot)$ denotes the Euler's Beta function;
$$B(a,b)=B(b,a)=\int_0^1 (1-s)^{a-1}s^{b-1}ds=\int_0^1 s^{a-1}(1-s)^{b-1}ds,\ a,b>0.$$
\end{itemize} 
\end{lem}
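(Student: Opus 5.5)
The plan is to estimate the integrand pointwise and then integrate. The tools are the smoothing estimate \eqref{e:LINEAREST} for $\beta\leadsto\delta$, the growth bound \eqref{e:EPSREGBASIC0}, and the weighted norm $\|u\|_{\alpha,T}$.

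For $s\in(0,t_3)$ we have $0<t_3-s\leq T$, so \eqref{e:LINEAREST} and \eqref{e:EPSREGBASIC0} give
\[
\|S(t_3-s)\calf(u(s))\|_{E^\delta}\leq M(\beta,\delta,T)(t_3-s)^{-\dif(\beta,\delta)}c_0\bigl(1+\|u(s)\|_{E^\alpha}^\rho\bigr).
\]
By definition of the norm, $\|u(s)\|_{E^\alpha}\leq \|u\|_{\alpha,T}\,s^{-\dif(\gamma,\alpha)}$ for $s\in(0,T]$. Hence
\[
\|u(s)\|_{E^\alpha}^\rho\leq \|u\|_{\alpha,T}^\rho\, s^{\mu-1}.
\]
Measurability of the integrand follows from continuity of $(t,v)\mapsto S(t)v$ in Definition~\ref{def:SMOOTHS}(ii) together with $u\in L^\infty_{loc}$. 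I take this as routine.

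For part (i), restrict to $s\in[t_1,t_2]$. Since $\dif(\gamma,\alpha)\geq 0$ and $\rho\geq1$, we have $\mu\leq1$. Thus $s^{\mu-1}\leq t_1^{\mu-1}$ on this interval, and that factor comes out of the integral. What remains is $\int_{t_1}^{t_2}(t_3-s)^{-\dif(\beta,\delta)}ds$. Because $t_3\geq t_2$ and the exponent $-\dif(\beta,\delta)$ is nonpositive, the integrand is nondecreasing in $t_3$'s distance to $s$ shrinking, so it is maximal when $t_3=t_2$. This gives the bound
\[
\int_{t_1}^{t_2}(t_2-s)^{-\dif(\beta,\delta)}ds=\frac{(t_2-t_1)^{\nu(\delta)}}{\nu(\delta)},
\]
which uses $\nu(\delta)>0$. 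Multiplying out yields \eqref{e:INTEGRALBASIC0}.

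For part (ii), integrate over $(0,t_2)$ and again replace $t_3$ by $t_2$. The constant term contributes $t_2^{\nu(\delta)}/\nu(\delta)$. The other term is
\[
\int_0^{t_2}(t_2-s)^{\nu(\delta)-1}s^{\mu-1}ds.
\]
Substituting $s=t_2r$ turns it into $t_2^{\nu(\delta)+\mu-1}B(\nu(\delta),\mu)$, which is finite precisely because $\mu>0$ and $\nu(\delta)>0$. It then remains to check the exponent:
\[
\nu(\delta)+\mu-1=1-\reg(\delta)+\reg(\beta)-\rho\dif(\gamma,\alpha)=\omega+\reg(\gamma)-\reg(\delta).
\]
This gives \eqref{e:INTEGRAL1BASIC0}.

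The main point needing care is justifying the replacement $t_3\to t_2$, which relies on the sign of $\dif(\beta,\delta)\geq0$ guaranteed by $\beta\leadsto\delta$. A secondary point is integrability near $s=0$ in (ii), which is exactly where hypothesis \eqref{e:POSITIVEMU} enters.
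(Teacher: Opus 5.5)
Your proof is correct and follows the same route as the paper. The paper likewise uses the pointwise bound from \eqref{e:LINEAREST} and \eqref{e:EPSREGBASIC0}, replaces $(t_3-s)^{\nu(\delta)-1}$ by $(t_2-s)^{\nu(\delta)-1}$ since $\nu(\delta)\leq 1$, and substitutes $s=t_2r$ to get the Beta function. The only cosmetic difference is in (i): you pull out $t_1^{\mu-1}$ before integrating, whereas the paper first rescales to $\int_{t_1/t_2}^1(1-r)^{\nu(\delta)-1}r^{\mu-1}dr$ and then bounds it. One small caveat: your measurability remark would really need some regularity of $\calf$ (e.g.\ continuity, as implied by \eqref{e:EPSREG}), which this lemma does not assume, though the paper passes over this point too.
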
 

\begin{proof}
By assumption $\delta\in\J$ is such that $\beta\leadsto\delta$, so in particular $\dif(\beta,\delta)\geq 0$. Thus, by \eqref{e:LINEAREST}, \eqref{e:EPSREGBASIC0} and \eqref{e:DELTABASIC0} we get for $0\leq t_1<t_2\leq t_3\leq T$
$$\int_{t_1}^{t_2}\norm{S(t_3-s)\calf(u(s))}_{E^{\delta}}ds\leq  c_0M(\beta,\delta,T)\int_{t_1}^{t_2}(t_3-s)^{\nu(\delta)-1}(1+s^{-\rho\dif(\gamma,\alpha)}\norm{u}_{\alpha,T}^{\rho})ds$$
$$\leq c_0M(\beta,\delta,T)\Bigl(\tfrac{(t_2-t_1)^{\nu(\delta)}}{\nu(\delta)}+\|u\|_{\alpha,T}^{\rho}t_2^{1-\rho\dif(\gamma,\alpha)-\dif(\beta,\delta)}\int_{\frac{t_1}{t_2}}^{1}(1-s)^{\nu(\delta)-1}s^{\mu-1}ds\Bigr).$$
If $t_1>0$ the last integral estimates above by $\frac{t_1^{\mu-1}}{t_2^{\nu(\delta)+\mu-1}}\frac{(t_2-t_1)^{\nu(\delta)}}{\nu(\delta)}$, which leads to \eqref{e:INTEGRALBASIC0}. If $t_1=0$ the last integral is equal to $B(\nu(\delta),\mu)$ provided that \eqref{e:POSITIVEMU} holds, so we get \eqref{e:INTEGRAL1BASIC0}.
\end{proof} 

\begin{lem}\label{lem:BASIC1}
Let $\calf\colon E^\alpha\to E^\beta$ with $\beta\in\J_0$ and $\alpha\in\J$ satisfy
\begin{equation}\label{e:EPSREGBASIC1}
\norm{\calf(\phi)-\calf(\psi)}_{E^\beta}\leq c_0\norm{\phi-\psi}_{E^{\alpha}}(1+\norm{\phi}_{E^{\alpha}}^{\rho-1}+\norm{\psi}_{E^{\alpha}}^{\rho-1}),\ \phi,\psi\in E^{\alpha},
\end{equation}
for some $c_0>0$ and $\rho\geq 1$ and assume that $\gamma\in\J$ satisfies \eqref{e:GAMMAALPHA} and \eqref{e:POSITIVEMU} with $\mu>0$.
Let $\delta\in\J$ be such that \eqref{e:DELTABASIC0} holds with $\nu(\delta)>0$.
Then, given $u,v\in\mathcal{L}^\infty_{\dif(\gamma,\alpha)}((0,T];E^{\alpha})$ with $0<T<\infty$, for $0<t\leq T$ we have
\begin{equation*}
t^{\reg(\delta)-\reg(\gamma)}\int_{0}^{t}\|S(t-s)(\calf(u(s))-\calf(v(s)))\|_{E^{\delta}}ds\leq c_0M(\beta,\delta,T)\kappa(t,\delta)\norm{u-v}_{\alpha,T},
\end{equation*}
where
\begin{equation*}
\kappa(t,\delta):=[t^{1+\reg(\beta)-\reg(\alpha)}+(\|u\|_{\alpha,T}^{\rho-1}+\|v\|_{\alpha,T}^{\rho-1})
t^{\omega}]B(\nu(\delta),\mu)
\end{equation*}
with $\omega$ given in \eqref{e:DEFOMEGA}.
\end{lem}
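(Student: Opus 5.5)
The argument follows the proof of Lemma~\ref{lem:BASIC0}: a pointwise bound on the integrand, followed by a substitution that turns the time integral into a Beta function. Fix $0<s<t\leq T$. By \eqref{e:LINEAREST} applied with $\beta\leadsto\delta$ (recall $t-s\leq T$) and by \eqref{e:EPSREGBASIC1},
$$\|S(t-s)(\calf(u(s))-\calf(v(s)))\|_{E^\delta}\leq c_0M(\beta,\delta,T)(t-s)^{-\dif(\beta,\delta)}\|u(s)-v(s)\|_{E^\alpha}\bigl(1+\|u(s)\|_{E^\alpha}^{\rho-1}+\|v(s)\|_{E^\alpha}^{\rho-1}\bigr).$$
Now insert the weighted norms: $\|u(s)-v(s)\|_{E^\alpha}\leq s^{-\dif(\gamma,\alpha)}\|u-v\|_{\alpha,T}$ and $\|u(s)\|_{E^\alpha}^{\rho-1}\leq s^{-(\rho-1)\dif(\gamma,\alpha)}\|u\|_{\alpha,T}^{\rho-1}$, and likewise for $v$. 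The integrand is then bounded by $c_0M\|u-v\|_{\alpha,T}$ times
$$(t-s)^{\nu(\delta)-1}\Bigl[s^{-\dif(\gamma,\alpha)}+(\|u\|^{\rho-1}_{\alpha,T}+\|v\|^{\rho-1}_{\alpha,T})s^{-\rho\dif(\gamma,\alpha)}\Bigr].$$
Measurability of the integrand is taken for granted, exactly as in Lemma~\ref{lem:BASIC0}.

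Integrate over $(0,t)$ and substitute $s=t\sigma$. The second term gives $t^{\nu(\delta)+\mu-1}B(\nu(\delta),\mu)$, which converges since $\mu>0$ and $\nu(\delta)>0$. The first term gives $t^{\nu(\delta)-\dif(\gamma,\alpha)}B(\nu(\delta),1-\dif(\gamma,\alpha))$.

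The one real obstacle is that the statement has the single constant $B(\nu(\delta),\mu)$ in front of both terms. This requires $1-\dif(\gamma,\alpha)>0$ together with $B(\nu(\delta),1-\dif(\gamma,\alpha))\leq B(\nu(\delta),\mu)$. Both follow from $\rho\geq1$. First, $1-\dif(\gamma,\alpha)\geq 1-\rho\dif(\gamma,\alpha)=\mu>0$. Second, $b\mapsto B(a,b)$ is nonincreasing, because $\sigma^{b-1}$ is nonincreasing in $b$ for $\sigma\in(0,1)$.

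It remains to multiply by $t^{\reg(\delta)-\reg(\gamma)}$ and check the exponents, using $\nu(\delta)=1-\reg(\delta)+\reg(\beta)$. The first term gets
$$1+\reg(\beta)-\reg(\delta)-\reg(\alpha)+\reg(\gamma)+\reg(\delta)-\reg(\gamma)=1+\reg(\beta)-\reg(\alpha).$$
The second term gets
$$\nu(\delta)+\mu-1+\reg(\delta)-\reg(\gamma)=1-\rho\dif(\gamma,\alpha)+\reg(\beta)-\reg(\gamma)=\omega.$$
Together these give exactly $c_0M(\beta,\delta,T)\kappa(t,\delta)\|u-v\|_{\alpha,T}$, which is the claimed bound.
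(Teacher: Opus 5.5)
Your proof is correct and follows the paper's argument essentially line by line. You bound the integrand via \eqref{e:LINEAREST} and \eqref{e:EPSREGBASIC1}, insert the weighted norms, and integrate into Beta functions; the paper's terse remark that it ``used the fact that $\rho\geq 1$'' is exactly your comparison $B(\nu(\delta),1-\dif(\gamma,\alpha))\leq B(\nu(\delta),\mu)$. One small point: the inequality $1-\dif(\gamma,\alpha)\geq\mu$ also needs $\dif(\gamma,\alpha)\geq 0$ from \eqref{e:GAMMAALPHA}, not only $\rho\geq 1$, so you should cite it explicitly.
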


\begin{proof}
Using \eqref{e:LINEAREST} and \eqref{e:EPSREGBASIC1}, we obtain for $0<t\leq T$
\begin{equation*}
\begin{split}
&\int_{0}^{t}\norm{S(t-s)(\calf(u(s))-\calf(v(s)))}_{E^{\delta}}ds\\
&\leq c_0M(\beta,\delta,T)\int_{0}^{t}(t-s)^{\nu(\delta)-1}\norm{u(s)-v(s)}_{E^{\alpha}}(1+\norm{u(s)}_{E^{\alpha}}^{\rho-1}+\norm{v(s)}_{E^{\alpha}}^{\rho-1})ds\\
&\leq
c_0 M(\beta,\delta,T)\norm{u-v}_{\alpha,T} \int_{0}^{t}(t-s)^{\nu(\delta)-1}\Bigl(s^{-\dif(\gamma,\alpha)}+(\norm{u}_{\alpha,T}^{\rho-1}+\norm{v}_{\alpha,T}^{\rho-1})s^{-\rho\dif(\gamma,\alpha)}\Bigr)ds\\
&\leq
c_0t^{\reg(\gamma)-\reg(\delta)}M(\beta,\delta,T)\norm{u-v}_{\alpha,T}\Big[t^{1+\reg(\beta)-\reg(\alpha)}+(\norm{u}_{\alpha,T}^{\rho-1}+\norm{v}_{\alpha,T}^{\rho-1})t^{\omega}\Big]B(\nu(\delta),\mu),
\end{split}
\end{equation*}
where in the last inequality we used \eqref{e:DELTABASIC0}, \eqref{e:POSITIVEMU} and the fact that $\rho\geq 1$.
\end{proof}

We denote the right-hand side of \eqref{e:VCF} by
\begin{equation}\label{e:DEFH}
H(u,u_0)(t)=S(t)u_0+\sum_{i\in\cali}\int_{0}^{t}S(t-s)\calf_i(u(s))ds,\ t\in\calt_\tau,
\end{equation}
for $u\in\bigcap\limits_{T\in\calt_\tau}\bigcap\limits_{i\in\cali}\mathcal{L}^\infty_{\dif(\gamma,\alpha_i)}((0,T];E^{\alpha_i})$ and $u_0\in E^\gamma$. Following \cite[Lemmas 2.3, 2.4]{CHQRB17}, we examine its growth and regularity. 

\begin{lem}
\label{lem:REGH}
Let \eqref{e:EPSREG2} hold with $\beta_i\in\J_0$, $\alpha_i\in\J$ and $\rho_i\geq 1$ for $i\in\cali$ and assume that $\gamma\in\J_0$ satisfies 
\begin{equation}\label{e:GAMMAALPHAI}
\dif(\gamma,\alpha_i)=\reg(\alpha_i)-\reg(\gamma)\geq0\ \text{ for all}\ i\in\cali,
\end{equation}
\begin{equation}\label{e:POSITIVEMUI}
\mu_i:=1-\rho_i\dif(\gamma,\alpha_i)>0\ \text{ for all}\ i\in\cali
\end{equation} 
and
\begin{equation}\label{e:NONNEGATIVEOMEGA}
\omega_i=1-\rho_i\dif(\gamma,\alpha_i)+\reg(\beta_i)-\reg(\gamma)\geq 0\ \text{ for all}\ i\in\cali.
\end{equation}
Let $T\in\calt_\tau$, $u\in\bigcap\limits_{i\in\cali}\mathcal{L}^\infty_{\dif(\gamma,\alpha_i)}((0,T];E^{\alpha_i})$ and $u_0\in E^\gamma$. 

\noindent 
$(i)$ If $\delta\in\J$ is such that 
\begin{equation}\label{e:TODELTA1}
\gamma\leadsto\delta
\end{equation}
and
\begin{equation}\label{e:THETA}
\beta_i\leadsto\delta\ \text{ and }\ \nu_i(\delta):=1-\dif(\beta_i,\delta)>0\ \text{ for all}\ i\in\cali,
\end{equation}
then we have $H(u,u_0)\in\mathcal{L}^\infty_{\dif(\gamma,\delta)}((0,T];E^{\delta})$ and for $t\in(0,T]$
\begin{equation}\label{e:OSZHINL}
\begin{split}
t^{\dif(\gamma,\delta)}\|H(u,u_0)(t)\|_{E^{\delta}}&\leq t^{\dif(\gamma,\delta)}\norm{S(t)u_0}_{E^{\delta}}\\
&+c_0\sum_{i\in\cali}M(\beta_i,\delta,T)\Big[\tfrac{t^{\nu_i(\delta)+\dif(\gamma,\delta)}}{\nu_i(\delta)}+\|u\|_{\alpha_i,T}^{\rho_i}t^{\omega_i}B(\nu_i(\delta),\mu_i)\Big].
\end{split}
\end{equation}
$(ii)$ If \eqref{e:SETUP1}, \eqref{e:SETUP2} and \eqref{e:CONDALPHAJ} hold, 
then $H(u,u_0)\in\bigcap\limits_{i\in\cali}\mathcal{L}^\infty_{\dif(\gamma,\alpha_i)}((0,T];E^{\alpha_i})$.
\end{lem}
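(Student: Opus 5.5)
Part (i) follows by splitting $H(u,u_0)(t)$ into the linear term $S(t)u_0$ and the finitely many integral terms, and estimating each separately in $E^\delta$.

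\textbf{The linear term.} By \eqref{e:TODELTA1} and \eqref{e:LINEAREST},
\[
t^{\dif(\gamma,\delta)}\norm{S(t)u_0}_{E^\delta}\leq M(\gamma,\delta,T)\norm{u_0}_{E^\gamma},\qquad t\in(0,T].
\]
This gives both the first term of \eqref{e:OSZHINL} and a uniform bound on it.

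\textbf{The integral terms.} For each $i\in\cali$ we apply Lemma~\ref{lem:BASIC0}(ii) with $\calf=\calf_i$, $\alpha=\alpha_i$, $\beta=\beta_i$, $\rho=\rho_i$ and $t_2=t_3=t$. Its hypotheses hold: \eqref{e:EPSREGBASIC0} is \eqref{e:EPSREG2}, \eqref{e:GAMMAALPHA} is \eqref{e:GAMMAALPHAI}, \eqref{e:DELTABASIC0} is \eqref{e:THETA}, and \eqref{e:POSITIVEMU} is \eqref{e:POSITIVEMUI}. The lemma shows that the Bochner integral $\int_0^t S(t-s)\calf_i(u(s))\,ds$ converges absolutely in $E^\delta$. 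Measurability of the integrand comes from the joint continuity in Definition~\ref{def:SMOOTHS}(ii) together with measurability of $u$. We then multiply \eqref{e:INTEGRAL1BASIC0} by $t^{\dif(\gamma,\delta)}=t^{\reg(\delta)-\reg(\gamma)}$. The exponents combine as
\[
\omega_i+\reg(\gamma)-\reg(\delta)+\dif(\gamma,\delta)=\omega_i,
\]
while the first term becomes $t^{\nu_i(\delta)+\dif(\gamma,\delta)}/\nu_i(\delta)$. This is exactly the bracket in \eqref{e:OSZHINL}. Summing over $i\in\cali$ and using the triangle inequality gives \eqref{e:OSZHINL}.

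\textbf{Boundedness of the right-hand side.} Every power of $t$ on the right of \eqref{e:OSZHINL} is nonnegative: $\omega_i\ge0$ by \eqref{e:NONNEGATIVEOMEGA}, and $\nu_i(\delta)+\dif(\gamma,\delta)>0$. So the right-hand side is bounded on $(0,T]$ by a constant depending on $T$, $\norm{u_0}_{E^\gamma}$ and $\norm{u}_{\alpha_i,T}$. Local boundedness and measurability of $H(u,u_0)$ on $(0,T]$ follow from the same estimates, since the integral is a limit of measurable functions of $t$. This shows $H(u,u_0)\in\mathcal L^\infty_{\dif(\gamma,\delta)}((0,T];E^\delta)$.

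For part (ii) we fix $j\in\cali$ and apply (i) with $\delta=\alpha_j$.
\begin{itemize}
\item \eqref{e:TODELTA1} holds by \eqref{e:SETUP2}.
\item $\beta_i\leadsto\alpha_j$ holds by \eqref{e:SETUP1}, and $\nu_i(\alpha_j)>0$ holds by \eqref{e:CONDALPHAJ}.
\end{itemize}
Hence $H(u,u_0)\in\mathcal L^\infty_{\dif(\gamma,\alpha_j)}((0,T];E^{\alpha_j})$ for every $j$. Condition \ref{a:B1} guarantees that the values of $H(u,u_0)(t)$ computed in the different spaces coincide, so $H(u,u_0)$ is a single function lying in the intersection. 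Condition \ref{a:B2} plays the same role for $S(t)$ applied to elements of intersections.

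The only delicate point is this consistency issue: ensuring that the integral defined in each $E^{\alpha_j}$ is the same element. I would handle it by approximating with Riemann sums, whose terms agree by \ref{a:B2} and which converge in every $E^{\alpha_j}$ on $[\eps,t]$. Identifying the limits via \ref{a:B1} and then letting $\eps\to0$ completes the argument.
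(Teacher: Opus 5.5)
Your argument is the paper's: (i) is Lemma~\ref{lem:BASIC0}(ii) with $t_2=t_3=t$, multiplied by $t^{\dif(\gamma,\delta)}$ and bounded on $(0,T]$ using $\omega_i\geq 0$ and $\gamma\leadsto\delta$, and (ii) is (i) with $\delta=\alpha_j$. The paper takes the integrals as well defined and consistent across spaces. If you want to justify this, drop the Riemann sums, since they need not converge when $s\mapsto\calf_i(u(s))$ is merely measurable and locally bounded; instead use that $\bigcap_{j\in\cali}E^{\alpha_j}$ with the max norm is a Banach space by \ref{a:B1}, so the bounded inclusions carry the Bochner integral there to the Bochner integral in each $E^{\alpha_j}$.
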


\begin{proof}
By Lemma~\ref{lem:BASIC0}~(ii)  we have \eqref{e:OSZHINL} for $t\in(0,T]$.
By \eqref{e:NONNEGATIVEOMEGA} and \eqref{e:TODELTA1} we get
\begin{equation*}
\begin{split}
\norm{H(u,u_0)}_{\delta,T}&\leq M(\gamma,\delta,T)\norm{u_0}_{E^\gamma}\\
&+c_0\sum_{i\in\cali}M(\beta_i,\delta,T)\Big[\tfrac{T^{\nu_i(\delta)+\dif(\gamma,\delta)}}{\nu_i(\delta)}+\|u\|_{\alpha_i,T}^{\rho_i}T^{\omega_i}B(\nu_i(\delta),\mu_i)\Big],
\end{split}
\end{equation*}
which shows that $H(u,u_0)\in\mathcal{L}^\infty_{\dif(\gamma,\delta)}((0,T];E^{\delta})$.
\end{proof}

\begin{lem}\label{lem:REGH2}
Let \eqref{e:EPSREG2} hold with $\beta_i\in\J_0$, $\alpha_i\in\J$ and $\rho_i\geq 1$ for $i\in\cali$ and assume that
$\gamma\in\J_0$ satisfies \eqref{e:GAMMAALPHAI} and \eqref{e:POSITIVEMUI}.  
Assume also that $\{S(t)\}_{t>0}$ is a semigroup on $E^{\gamma}$ and on each $E^{\beta_i}$, $i\in\cali$,
and let $u\in\bigcap\limits_{T\in\calt_\tau}\bigcap\limits_{i\in\cali}\mathcal{L}^\infty_{\dif(\gamma,\alpha_i)}((0,T];E^{\alpha_i})$ and $u_0\in E^\gamma$.
If $\delta\in\J$ satisfies \eqref{e:TODELTA1} and \eqref{e:THETA}, 
then $H(u,u_0)\in C(\calt_\tau;E^{\delta})$.
\end{lem}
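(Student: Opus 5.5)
We have to prove that $H(u,u_0)$ is continuous on $\calt_\tau$ with values in $E^\delta$. We split $H(u,u_0)$ into $S(t)u_0$ and the terms $\int_0^t S(t-s)\calf_i(u(s))ds$, $i\in\cali$, and treat each separately. Fix $t_0\in\calt_\tau$ and choose $T\in\calt_\tau$ with $t_0\leq T$; when $t_0<\sup\calt_\tau$ we take $T>t_0$, and otherwise $t_0=\tau=T$ and only left continuity is needed. By Definition~\ref{def:SMOOTHS}(ii) and \eqref{e:TODELTA1}, the map $t\mapsto S(t)u_0\in E^\delta$ is continuous on $(0,\infty)$, so the first term is settled.

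Fix $i\in\cali$ and write $I_i(t)=\int_0^t S(t-s)\calf_i(u(s))ds$. Since $u(s)\in E^{\alpha_i}$ and $\calf_i$ maps into $E^{\beta_i}$, Lemma~\ref{lem:BASIC0}(ii) applied with $\beta=\beta_i$, $\alpha=\alpha_i$, and using $\mu_i>0$ and $\nu_i(\delta)>0$, shows that the integral converges absolutely in $E^\delta$. We first treat $t>t_0$, say $t=t_0+h$ with $h>0$ small. We fix $\eps\in(0,t_0)$ and decompose
\[
I_i(t_0+h)-I_i(t_0)=\int_{t_0}^{t_0+h}S(t_0+h-s)\calf_i(u(s))ds+\int_{t_0-\eps}^{t_0}\bigl[S(t_0+h-s)-S(t_0-s)\bigr]\calf_i(u(s))ds+\int_0^{t_0-\eps}\bigl[S(t_0+h-s)-S(t_0-s)\bigr]\calf_i(u(s))ds.
\]
For the first two pieces we use \eqref{e:INTEGRALBASIC0} with $t_1$ bounded away from zero. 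This bounds them by $C\,h^{\nu_i(\delta)}$ and $C\,\eps^{\nu_i(\delta)}$ respectively, with $C$ depending on $t_0-\eps$, $T$ and $\|u\|_{\alpha_i,T}$ but not on $h$. For the third piece we use the semigroup property on $E^{\beta_i}$ to write
\[
\int_0^{t_0-\eps}\bigl[S(t_0+h-s)-S(t_0-s)\bigr]\calf_i(u(s))ds=\bigl(S(\eps+h)-S(\eps)\bigr)w,\qquad w:=\int_0^{t_0-\eps}S(t_0-\eps-s)\calf_i(u(s))ds .
\]
Here the difficulty is that $w$ is only known to lie in $E^\delta$ (by Lemma~\ref{lem:BASIC0}), whereas $S$ is assumed to act on $E^{\beta_i}$, not necessarily on $E^\delta$. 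To avoid this, we instead use dominated convergence directly. For each fixed $s<t_0-\eps$, continuity of $(t,v)\mapsto S(t)v$ from $E^{\beta_i}$ to $E^\delta$ gives pointwise convergence of the integrand to $0$ as $h\to 0$. The integrand is bounded by $2c_0M(\beta_i,\delta,T)\,\eps^{-\dif(\beta_i,\delta)}\bigl(1+\|u\|^{\rho_i}_{\alpha_i,T}s^{-\rho_i\dif(\gamma,\alpha_i)}\bigr)$, which is integrable because $\mu_i>0$. Measurability of the integrand follows from the Bochner integrability already implicit in Lemma~\ref{lem:BASIC0}. Hence the third piece tends to $0$, and letting $h\to0$ and then $\eps\to0$ yields right continuity. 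The semigroup property is therefore used only in this mild way, or possibly not at all.

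For $t=t_0-h<t_0$ we use the symmetric decomposition: the piece $\int_{t_0-h}^{t_0}S(t_0-s)\calf_i(u(s))ds$, plus the integral over $[0,t_0-h]$ of $[S(t_0-s)-S(t_0-h-s)]\calf_i(u(s))$, split at $t_0-h-\eps$. On $[t_0-h-\eps,t_0-h]$ we bound by \eqref{e:INTEGRALBASIC0} with $t_1=t_0-h-\eps\geq t_0/2$, obtaining $O(\eps^{\nu_i(\delta)})$ uniformly in small $h$, and note that the argument $t_0-s$ stays $\geq h$, where \eqref{e:LINEAREST} still applies. On $[0,t_0-h-\eps]$ both times $t_0-s$ and $t_0-h-s$ are at least $\eps$, so dominated convergence applies as before (after extending the integrand by zero to $[0,t_0-\eps]$). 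The main technical point is the uniform integrable domination near $s=0$, which is exactly where $\mu_i>0$ from \eqref{e:POSITIVEMUI} enters, together with the near-diagonal smallness, which uses $\nu_i(\delta)>0$ from \eqref{e:THETA}.
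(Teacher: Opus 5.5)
Your proof is correct and follows essentially the paper's route. The near-diagonal pieces are controlled by \eqref{e:INTEGRALBASIC0}, and the remaining integral is handled by dominated convergence using the joint continuity in Definition~\ref{def:SMOOTHS}(ii), with a cutoff for the left limit essentially as in the paper. For the right limit the paper dominates over all of $(0,t_0)$ using the integrable factor $(t_0-s)^{\nu_i(\delta)-1}$, so your extra $\eps$-cutoff is harmless but not needed. Your remark that the semigroup hypothesis is superfluous is also right: the paper uses it only to write $S(t)u_0=S(t-\frac{t_0}{2})S(\frac{t_0}{2})u_0$, whereas continuity of $t\mapsto S(t)u_0$ in $E^\delta$ already follows directly from $\gamma\leadsto\delta$, as you note.
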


\begin{proof} 
Let $0<t_0<T\in\calt_\tau$ and observe that for $\frac{t_0}{2}\leq t\leq T$ we have
\begin{equation*}
\|H(u,u_0)(t)-H(u,u_0)(t_0)\|_{E^{\delta}}\leq\|\left(S\left( t-\tfrac{t_0}{2}\right)-S\left(\tfrac{t_0}{2}\right)\right) S\left(\tfrac{t_0}{2}\right)u_0\|_{E^{\delta}}+I_t,
\end{equation*}
where 
\begin{equation}\label{e:IT}
I_t=\Bigl\|\sum_{i\in\cali}\int_{0}^{t}S(t-s)\calf_i(u(s))ds-\sum_{i\in\cali}\int_{0}^{t_0}S(t_0-s)\calf_i(u(s))ds\Bigr\|_{E^{\delta}}.
\end{equation}
Since $S\left(\frac{t_0}{2}\right)u_0\in E^{\gamma}$ and by \eqref{e:TODELTA1} the semigroup smooths from $E^\gamma$ to $E^{\delta}$, we have 
$$\norm{\left(S\left( t-\tfrac{t_0}{2}\right)-S\left(\tfrac{t_0}{2}\right)\right) S\left(\tfrac{t_0}{2}\right)u_0}_{E^{\delta}}\to 0\ \text{ as }\ t\to t_0.$$
To prove the claim, it suffices to show that $I_t$ in \eqref{e:IT} tends to zero as $t\to t_0$.

We first show that it converges to zero as $t\to t_0^{+}$. For $t_0<t\leq T$ we get
$$I_t\leq\sum_{i\in\cali}\int_{t_0}^{t}\norm{S(t-s)\calf_i(u(s))}_{E^{\delta}}ds+
\int_{0}^{t_0}g_t(s)ds$$
with 
$$g_t(s):=\Bigl\|\sum_{i\in\cali}(S(t-s)-S(t_0-s))\calf_i(u(s))\Bigr\|_{E^{\delta}}\ \text{ for }\ s\in(0,t_0).$$
By \eqref{e:INTEGRALBASIC0} with $\mu_i, \nu_i(\delta)\in(0,1]$ given in \eqref{e:POSITIVEMUI} and \eqref{e:THETA}, respectively, we know that 
\begin{equation*}
\begin{split}
\sum_{i\in\cali}&\int_{t_0}^{t}\norm{S(t-s)\calf_i(u(s))}_{E^{\delta}}ds\leq c_0\sum_{i\in\cali}M(\beta_i,\delta,T)\tfrac{(t-t_0)^{\nu_i(\delta)}}{\nu_i(\delta)}
\Bigl(1+\|u\|_{\alpha_i,T}^{\rho_i}t_0^{\mu_i-1}\Bigr),
\end{split}
\end{equation*}
which tends to zero as $t\to t_0^+$. To show that
\begin{equation}\label{e:TOLDCT}
\int_{0}^{t_0}g_t(s)ds\to 0\ \text{ as }\ t\to t_0^{+},
\end{equation} 
using \eqref{e:EPSREG2} and \eqref{e:THETA} with $\nu_i(\delta)\in(0,1]$, we estimate for $s\in(0,t_0)$ 
\begin{equation*}
\begin{split}
g_t(s)&\leq\sum_{i\in\cali}\norm{S(t-s)\calf_i(u(s))}_{E^{\delta}}+\sum_{i\in\cali}\norm{S(t_0-s)\calf_i(u(s))}_{E^{\delta}}\\
&\leq 2c_0\sum_{i\in\cali}M(\beta_i,\delta,T)(t_0-s)^{\nu_i(\delta)-1}\Bigl(1+\norm{u}_{\alpha_i,T}^{\rho_i}s^{\mu_i-1}\Bigr)=:g(s).
\end{split}
\end{equation*}
Hence $g_t$ is bounded by $g$ on $(0,t_0)$ for $t_0<t\leq T$. Moreover, by \eqref{e:POSITIVEMUI} we have 
\begin{equation*}
\int_{0}^{t_0}g(s)ds\leq 2c_0\sum_{i\in\cali}M(\beta_i,\delta,T)\Bigl(\tfrac{t_0^{\nu_i(\delta)}}{\nu_i(\delta)}+ \norm{u}_{\alpha_i,T}^{\rho_i}t_0^{\nu_i(\delta)+\mu_i-1}B(\nu_i(\delta),\mu_i)\Bigr).
\end{equation*}
For a given $s\in(0,t_0)$ we have
\begin{equation*}
0\leq g_t(s)\leq
\sum_{i\in\cali}\norm{(S(t-s)-S(t_0-s))\calf_i(u(s))}_{E^{\delta}}\to 0\ \text{ as }\ t\to t_0^+,
\end{equation*}
since $\calf_i(u(s))\in E^{\beta_i}$ and the semigroup smooths from $E^{\beta_i}$ to $E^{\delta}$.
Consequently, applying Lebesgue's dominated convergence theorem, we get \eqref{e:TOLDCT}.
This shows that 
$$\norm{H(u,u_0)(t)-H(u,u_0)(t_0)}_{E^{\delta}}\to 0\ \text{ as }\ t\to t_0^{+}.$$

We will now show that $I_t$ in \eqref{e:IT} tends to zero as $t\to t_0^{-}$.
Given $\eps>0$, we choose $0<\eta\leq\frac{t_0}{2}$ so small that 
\begin{equation}\label{e:CHOICEETA}
\max_{i\in\cali}M(\beta_i,\delta,T)\tfrac{\eta^{\nu_i(\delta)}}{\nu_i(\delta)}
\Bigl(1+\|u\|_{\alpha_i,T}^{\rho_i}\bigl(\tfrac{t_0}{2}\bigr)^{\mu_i-1}\Bigr)\leq\tfrac{\eps}{2c_0|\cali|}.
\end{equation}
Let $t$ be such that $0<t_0-t<\eta$. Then we have $t\in(\frac{t_0}{2},t_0)$ and
\begin{equation*}
\begin{split}
I_t
&\leq\sum_{i\in\cali}\int_{t}^{t_0}\norm{S(t_0-s)\calf_i(u(s))}_{E^{\delta}}ds+
\int_{0}^{t_0-\eta}h_t(s)ds\\
&+\sum_{i\in\cali}\int_{t_0-\eta}^{t}\bigl(\norm{S(t-s)\calf_i(u(s))}_{E^{\delta}}+\norm{S(t_0-s)\calf_i(u(s))}_{E^{\delta}}\bigr)ds,
\end{split}
\end{equation*}
where
$$h_t(s):=\Bigl\|\sum_{i\in\cali}(S(t-s)-S(t_0-s))\calf_i(u(s))\Bigr\|_{E^{\delta}}\ \text{ for }\ s\in(0,t_0-\eta).$$
By \eqref{e:INTEGRALBASIC0} we know that 
\begin{equation*}
\begin{split}
&\sum_{i\in\cali}\int_{t}^{t_0}\norm{S(t_0-s)\calf_i(u(s))}_{E^{\delta}}ds\leq 
c_0\sum_{i\in\cali}M(\beta_i,\delta,T)\tfrac{(t_0-t)^{\nu_i(\delta)}}{\nu_i(\delta)}
\Bigl(1+\|u\|_{\alpha_i,T}^{\rho_i}\bigl(\tfrac{t_0}{2}\bigr)^{\mu_i-1}\Bigr)
\end{split}
\end{equation*}
tends to zero as $t\to t_0^{-}$, and by \eqref{e:CHOICEETA} we have
\begin{equation*}
\begin{split}
\sum_{i\in\cali}&\int_{t_0-\eta}^{t}\bigl(\norm{S(t-s)\calf_i(u(s))}_{E^{\delta}}+\norm{S(t_0-s)\calf_i(u(s))}_{E^{\delta}}\bigr)ds\\ 
&\leq 2c_0\sum_{i\in\cali}M(\beta_i,\delta,T)\tfrac{(t-t_0+\eta)^{\nu_i(\delta)}}{\nu_i(\delta)}
\Bigl(1+\|u\|_{\alpha_i,T}^{\rho_i}(t_0-\eta)^{\mu_i-1}\Bigr)\\
&\leq  
2c_0\sum_{i\in\cali}M(\beta_i,\delta,T)\tfrac{\eta^{\nu_i(\delta)}}{\nu_i(\delta)}
\Bigl(1+\|u\|_{\alpha_i,T}^{\rho_i}\bigl(\tfrac{t_0}{2}\bigr)^{\mu_i-1}\Bigr)\leq\eps.
\end{split}	
\end{equation*}  
Finally, using the same argument as for $g_t$ in \eqref{e:TOLDCT}, by Lebesgue's dominated convergence theorem we get
\begin{equation*}
\int_{0}^{t_0-\eta}h_t(s)ds\to 0\ \text{ as }\ t\to t_0^{-}.
\end{equation*}
This shows that 
$$\norm{H(u,u_0)(t)-H(u,u_0)(t_0)}_{E^{\delta}}\to 0\ \text{ as }\ t\to t_0^{-},$$
which ends the proof of the claim.
\end{proof}  

Next, we formulate additional properties of $\gamma$-solutions.  

\begin{prop}\label{prop:PROPERTIES}
Let \eqref{e:EPSREG2} hold with $\beta_i\in\J_0$, $\alpha_i\in\J$ and $\rho_i\geq 1$ for $i\in\cali$ and assume that
$\gamma\in\J_0$ satisfies \eqref{e:GAMMAALPHAI} and \eqref{e:POSITIVEMUI}. 
Let $u$ be a $\gamma$-solution of \eqref{e:VCF} corresponding to $u_0\in E^\gamma$ defined in $\calt_{\tau}$ for some $\tau\in(0,\infty]$.
If $\delta\in\J$ satisfies \eqref{e:THETA}, then
\begin{itemize}
\item[(i)] we have
\begin{equation}\label{e:CONVGAMMATHETA}
t^{\reg(\delta)-\reg(\gamma)}\norm{u(t)}_{E^{\delta}}\to 0\ \text{ as }t\to 0^{+},
\end{equation}
provided that  \eqref{e:POSITIVEOMEGA} holds and
\begin{equation}\label{e:CONVS}
t^{\reg(\delta)-\reg(\gamma)}\norm{S(t)u_0}_{E^{\delta}}\to 0\ \text{ as }t\to 0^{+},
\end{equation}
\item[(ii)] we have
$$\lim_{t\to 0^{+}}\norm{u(t)-u_0}_{E^{\delta}}=0,$$
provided that  $\reg(\delta)-\reg(\gamma)<\min\limits_{i\in\cali}\omega_i$, $u_0\in E^{\delta}$ and 
$\lim\limits_{t\to 0^+}\|S(t)u_0-u_0\|_{ E^{\delta}}=0$,
\item[(iii)] $u\in C(\calt_{\tau};E^\delta)$ provided that $\{S(t)\}_{t>0}$ is a semigroup on $E^{\gamma}$ and on each $E^{\beta_i}$, $i\in\cali$, and $\delta\in\J$ satisfies also \eqref{e:TODELTA1}.
\end{itemize}
\end{prop}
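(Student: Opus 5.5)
Since $u$ is a $\gamma$-solution, it satisfies $u=H(u,u_0)$ on $\calt_\tau$, and for every $T\in\calt_\tau$ we have $u\in K(T)$. So all three parts reduce to statements about $H(u,u_0)$, to which Lemma~\ref{lem:BASIC0}, Lemma~\ref{lem:REGH} and Lemma~\ref{lem:REGH2} apply directly. Under \eqref{e:THETA} and \eqref{e:POSITIVEMUI}, Lemma~\ref{lem:BASIC0}(ii), used with $t_2=t_3=t$ for each $\calf_i$, gives for $t\in(0,T]$
\begin{equation*}
\Bigl\|u(t)-S(t)u_0\Bigr\|_{E^\delta}\leq c_0\sum_{i\in\cali}M(\beta_i,\delta,T)\Bigl(\tfrac{t^{\nu_i(\delta)}}{\nu_i(\delta)}+\|u\|_{\alpha_i,T}^{\rho_i}t^{\omega_i+\reg(\gamma)-\reg(\delta)}B(\nu_i(\delta),\mu_i)\Bigr).
\end{equation*}
This inequality is the common estimate behind (i) and (ii).

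For (i), multiply the estimate by $t^{\reg(\delta)-\reg(\gamma)}$. The first group of terms becomes $t^{\nu_i(\delta)+\reg(\delta)-\reg(\gamma)}=t^{1+\reg(\beta_i)-\reg(\gamma)}$. Since $\omega_i>0$ and $\rho_i\dif(\gamma,\alpha_i)\geq0$, we have $1+\reg(\beta_i)-\reg(\gamma)\geq\omega_i>0$. The second group becomes $\|u\|_{\alpha_i,T}^{\rho_i}t^{\omega_i}$, which also vanishes as $t\to0^+$ by \eqref{e:POSITIVEOMEGA}, with $T$ fixed. The term $t^{\reg(\delta)-\reg(\gamma)}\norm{S(t)u_0}_{E^\delta}$ tends to zero by \eqref{e:CONVS}, and the triangle inequality gives \eqref{e:CONVGAMMATHETA}.

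For (ii), write $\norm{u(t)-u_0}_{E^\delta}\leq\norm{S(t)u_0-u_0}_{E^\delta}+\norm{u(t)-S(t)u_0}_{E^\delta}$. The first term vanishes by hypothesis. In the estimate above, $t^{\nu_i(\delta)}\to0$ because $\nu_i(\delta)>0$. The exponent $\omega_i+\reg(\gamma)-\reg(\delta)$ is positive because $\reg(\delta)-\reg(\gamma)<\min_i\omega_i$. So the second term also tends to zero. The only subtlety is that the hypothesis on $\reg(\delta)-\reg(\gamma)$ is exactly what makes that exponent positive. In particular, (ii) does not need \eqref{e:POSITIVEOMEGA} beyond this inequality, and if $\reg(\delta)<\reg(\gamma)$ the hypothesis holds whenever all $\omega_i\geq0$.

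For (iii), this is Lemma~\ref{lem:REGH2} applied to $u$ itself. Its hypotheses are: $u\in\bigcap_{T\in\calt_\tau}K(T)$, the semigroup property on $E^\gamma$ and on each $E^{\beta_i}$, \eqref{e:TODELTA1}, and \eqref{e:THETA}. These give $H(u,u_0)\in C(\calt_\tau;E^\delta)$, and $u=H(u,u_0)$ on $\calt_\tau$ finishes the proof. I expect no real obstacle here. The only points to check are the exponent bookkeeping in (i)–(ii) and that Lemma~\ref{lem:BASIC0} is applied with a fixed finite $T\in\calt_\tau$, so that the constants $M(\beta_i,\delta,T)$ and the norms $\|u\|_{\alpha_i,T}$ are finite.
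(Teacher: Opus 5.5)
Your proposal is correct and follows the paper's proof essentially step for step. The paper also uses Lemma~\ref{lem:BASIC0}(ii) with $t_2=t_3=t$. It gets (i) by multiplying by $t^{\reg(\delta)-\reg(\gamma)}$ and using $0<\omega_i\leq 1+\reg(\beta_i)-\reg(\gamma)$, gets (ii) from the same estimate together with the triangle inequality, and obtains (iii) directly from Lemma~\ref{lem:REGH2} applied to $u=H(u,u_0)$.
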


\begin{proof}
\textit{(i)} Let $T\in\calt_\tau$. By \eqref{e:POSITIVEMUI}, \eqref{e:THETA} and \eqref{e:INTEGRAL1BASIC0} we have for $t\in(0,T]$
\begin{equation}\label{e:CONV1}
\begin{split}
t^{\reg(\delta)-\reg(\gamma)}\norm{u(t)}_{E^{\delta}}&\leq t^{\reg(\delta)-\reg(\gamma)}\norm{S(t)u_0}_{E^{\delta}}\\
&+c_0\sum_{i\in\cali}M(\beta_i,\delta,T)\Bigl(\tfrac{t^{1+\reg(\beta_i)-\reg(\gamma)}}{\nu_i(\delta)}+\norm{u}_{\alpha_i,T}^{\rho_i}t^{\omega_i}B(\nu_i(\delta),\mu_i)\Bigr).
\end{split}
\end{equation}
Since $0<\omega_i\leq 1+\reg(\beta_i)-\reg(\gamma)$ for $i\in\cali$ by \eqref{e:POSITIVEOMEGA}, we obtain \eqref{e:CONVGAMMATHETA} due to \eqref{e:CONVS}.

\noindent
\textit{(ii)} Let $T\in\calt_\tau$. If $\reg(\delta)-\reg(\gamma)<\min\limits_{i\in\cali}\omega_i$ then again by \eqref{e:INTEGRAL1BASIC0} we get for $t\in(0,T]$
\begin{equation*}
\begin{split}
\|u(t)-u_0\|_{E^{\delta}}&\leq\|S(t) u_0-u_0\|_{E^{\delta}}\\
&+c_0\sum_{i\in\cali}M(\beta_i,\delta,T)\Bigl(\tfrac{t^{\nu_i(\delta)}}{\nu_i(\delta)}+\norm{u}_{\alpha_i,T}^{\rho_i}t^{\omega_i+\reg(\gamma)-\reg(\delta)}B(\nu_i(\delta),\mu_i)\Bigr),	
\end{split}
\end{equation*} 
which vanishes as $t\to 0^+$ provided that $S(t)u_0$ tends to $u_0\in E^\delta$ in $E^{\delta}$ as $t\to0^+$.

\noindent
\textit{(iii)} It is a direct consequence of Lemma~\ref{lem:REGH2}.
\end{proof}

From Proposition~\ref{prop:PROPERTIES} we obtain the following particular results.

\begin{cor}\label{cor:CONTINUITY}
Let \eqref{e:EPSREG2} hold with $\beta_i\in\J_0$, $\alpha_i\in\J$ and $\rho_i\geq 1$ for $i\in\cali$
and assume that $\gamma\in\J_0$ satisfies \eqref{e:GAMMAALPHAI} and \eqref{e:POSITIVEMUI}. 
Assume also that $\{S(t)\}_{t>0}$ is a~semigroup on $E^{\gamma}$ and on each $E^{\beta_i}$, $i\in\cali$. Let $u$ be a $\gamma$-solution of \eqref{e:VCF} corresponding to $u_0\in E^\gamma$ defined in $\calt_{\tau}$ for some $\tau\in(0,\infty]$.
\begin{itemize}
\item[(a)] If \eqref{e:SETUP1}, \eqref{e:SETUP2} and \eqref{e:CONDALPHAJ} hold, then
$u\in\bigcap\limits_{i\in\cali}C(\calt_\tau;E^{\alpha_i})$.
\item[(b)] If \eqref{e:TOGAMMA} holds and $\dif(\beta_i,\gamma)<1$ for all $i\in\cali$, then $u\in C(\calt_\tau;E^{\gamma})$. Moreover, if additionally \eqref{e:STC0} and \eqref{e:POSITIVEOMEGA} hold,
then $u$ extends continuously to a function in $C(\calt_\tau\cup\{0\};E^\gamma)$ with $u(0)=u_0$.
\end{itemize}
\end{cor}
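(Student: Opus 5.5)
The corollary follows from Proposition~\ref{prop:PROPERTIES}. The main work is checking that the required $\delta$ satisfies \eqref{e:THETA} and \eqref{e:TODELTA1}.

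\emph{Part (a).} Fix $j\in\cali$ and set $\delta=\alpha_j$. By \eqref{e:SETUP2} we have $\gamma\leadsto\alpha_j$, which is \eqref{e:TODELTA1}. By \eqref{e:SETUP1} we have $\beta_i\leadsto\alpha_j$ for every $i\in\cali$. By \eqref{e:CONDALPHAJ}, $\nu_i(\alpha_j)=1-\dif(\beta_i,\alpha_j)>0$ for every $i$, so \eqref{e:THETA} holds with $\delta=\alpha_j$. Together with the semigroup assumption, Proposition~\ref{prop:PROPERTIES}~(iii) gives $u\in C(\calt_\tau;E^{\alpha_j})$. Since $j$ is arbitrary, (a) follows.

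\emph{Part (b), continuity on $\calt_\tau$.} Take $\delta=\gamma$. Condition \eqref{e:TOGAMMA} gives $\gamma\leadsto\gamma$, which is \eqref{e:TODELTA1}, and $\beta_i\leadsto\gamma$ for every $i$. The assumption $\dif(\beta_i,\gamma)<1$ gives $\nu_i(\gamma)>0$, so \eqref{e:THETA} holds. Proposition~\ref{prop:PROPERTIES}~(iii) then yields $u\in C(\calt_\tau;E^\gamma)$.

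\emph{Part (b), extension to $t=0$.} Apply Proposition~\ref{prop:PROPERTIES}~(ii) with $\delta=\gamma$. Trivially $u_0\in E^\gamma$, and the convergence $S(t)u_0\to u_0$ in $E^\gamma$ is exactly \eqref{e:STC0}. The remaining condition $\reg(\delta)-\reg(\gamma)<\min_i\omega_i$ becomes $0<\min_i\omega_i$, which is \eqref{e:POSITIVEOMEGA}; $\cali$ is finite, so the minimum is positive. Hence $\norm{u(t)-u_0}_{E^\gamma}\to0$ as $t\to0^+$. Defining $u(0)=u_0$ and combining with the continuity on $\calt_\tau$ gives $u\in C(\calt_\tau\cup\{0\};E^\gamma)$.

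The only delicate point is this last step: one must confirm that (ii) of Proposition~\ref{prop:PROPERTIES} needs only \eqref{e:THETA} and not \eqref{e:TODELTA1}, and that $\dif(\gamma,\gamma)=0$ is consistent with $\gamma\leadsto\gamma$. Both are immediate from the statements, so I expect no real obstacle.
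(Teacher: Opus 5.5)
Your proposal is correct and takes essentially the same approach as the paper, which obtains this corollary directly from Proposition~\ref{prop:PROPERTIES}: $\delta=\alpha_j$ for (a), $\delta=\gamma$ for (b), part (iii) for continuity on $\calt_\tau$, and part (ii) for the limit at $t=0$, where $\reg(\delta)-\reg(\gamma)<\min_{i\in\cali}\omega_i$ reduces to \eqref{e:POSITIVEOMEGA}. Your checks of \eqref{e:TODELTA1} and \eqref{e:THETA} in each case are exactly what is needed.
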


Observe that for $\gamma\leadsto\delta$ the boundedness of $t\mapsto t^{\dif(\gamma,\delta)}\norm{S(t)u_0}_{E^\gamma}$ near zero can be improved to \eqref{e:CONVS} if we find $\sigma\in\J_0$ with better regularity index than $\reg(\gamma)$ such that $\sigma\leadsto\delta$ and $E^\sigma\cap E^\gamma$ is dense in $E^\gamma$.   

\begin{prop}\label{prop:CONV2}
Assume that $\gamma\in\J_0$, $\delta\in\J$ are such that $\gamma\leadsto\delta$.  If $\sigma\in\J_0$ is such that $\sigma\leadsto\delta$, $\reg(\gamma)<\reg(\sigma)$ and $E^\sigma\cap E^\gamma$ is dense in $E^\gamma$, then
\begin{equation*}
\lim_{t\to 0^{+}}t^{\dif(\gamma,\delta)}\norm{S(t)u_0}_{E^{\delta}}=0\ \text{ for any }u_0\in E^\gamma.
\end{equation*}
\end{prop}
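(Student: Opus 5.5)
The plan is a standard density argument combined with the uniform bound coming from $\gamma\leadsto\delta$. Fix $u_0\in E^\gamma$ and $\eps>0$, and put $T=1$; only $t\in(0,1]$ matters. By density of $E^\sigma\cap E^\gamma$ in $E^\gamma$, we choose $v_0\in E^\sigma\cap E^\gamma$ with $\norm{u_0-v_0}_{E^\gamma}\leq \eps/(2M(\gamma,\delta,1))$.

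By linearity of $S(t)$ on $E^\gamma$ we split
\[
t^{\dif(\gamma,\delta)}\norm{S(t)u_0}_{E^\delta}\leq t^{\dif(\gamma,\delta)}\norm{S(t)(u_0-v_0)}_{E^\delta}+t^{\dif(\gamma,\delta)}\norm{S(t)v_0}_{E^\delta}.
\]
Here \ref{a:B2} is needed: $S(t)v_0$ is the same element whether $S(t)$ is regarded on $E^\gamma$ or on $E^\sigma$. By \eqref{e:LINEAREST} for $\gamma\leadsto\delta$, the first term is at most $M(\gamma,\delta,1)\norm{u_0-v_0}_{E^\gamma}\leq\eps/2$ for all $t\in(0,1]$.

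For the second term we use $\sigma\leadsto\delta$, which gives $t^{\dif(\sigma,\delta)}\norm{S(t)v_0}_{E^\delta}\leq M(\sigma,\delta,1)\norm{v_0}_{E^\sigma}$. Since $\dif(\gamma,\delta)=\dif(\sigma,\delta)+\reg(\sigma)-\reg(\gamma)$, we obtain
\[
t^{\dif(\gamma,\delta)}\norm{S(t)v_0}_{E^\delta}\leq t^{\reg(\sigma)-\reg(\gamma)}M(\sigma,\delta,1)\norm{v_0}_{E^\sigma}.
\]
The exponent $\reg(\sigma)-\reg(\gamma)$ is positive, so this tends to zero and is $\leq\eps/2$ for $t$ small. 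Hence $\limsup_{t\to0^+}t^{\dif(\gamma,\delta)}\norm{S(t)u_0}_{E^\delta}\leq\eps$, and since $\eps>0$ was arbitrary the limit is zero.

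The only point needing care is the consistency bookkeeping. The decomposition $S(t)u_0=S(t)(u_0-v_0)+S(t)v_0$ is taken in $E^\gamma$, with $S(t)$ acting linearly there. The estimate for $S(t)v_0$ holds for the operator on $E^\sigma$, and \ref{a:B2} identifies the two values. The main step is simply the exponent identity, which relies on $\sigma\leadsto\delta$ guaranteeing $\reg(\delta)\geq\reg(\sigma)$.
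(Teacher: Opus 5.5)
Your proof is correct and follows essentially the same route as the paper: approximate $u_0$ by $v_0\in E^\sigma\cap E^\gamma$, bound the difference uniformly via $\gamma\leadsto\delta$, and kill the $v_0$-term via $\sigma\leadsto\delta$ together with the identity $\dif(\gamma,\delta)=\dif(\sigma,\delta)+\reg(\sigma)-\reg(\gamma)$. Your explicit appeal to \ref{a:B2} to identify $S(t)v_0$ on $E^\gamma$ and on $E^\sigma$ is a point the paper leaves implicit, and fixing $T=1$ instead of a general $T$ changes nothing.
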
 

\begin{proof}
Since $\sigma\leadsto\delta$, by \eqref{e:LINEAREST} we have for $v_0\in E^\sigma$ and $t\in(0,T]$
$$t^{\dif(\gamma,\delta)}\|S(t) v_0\|_{E^{\delta}}=t^{\reg(\sigma)-\reg(\gamma)}t^{\dif(\sigma,\delta)}\|S(t)v_0\|_{E^{\delta}}\leq M(\sigma,\delta,T)t^{\reg(\sigma)-\reg(\gamma)}\|v_0\|_{E^{\sigma}}\to 0\text{ as }t\to0^{+}.$$
Let $u_0\in E^{\gamma}$, $\eta>0$ and take $v_0\in E^\sigma\cap E^\gamma$ such that $\|u_0-v_0\|_{E^{\gamma}}<\frac{\eta}{2M(\gamma,\delta,T)}$
by  density of $E^\sigma\cap E^\gamma$ in $E^\gamma$.
We choose $0<t_\eta\leq T$ such that
$t^{\dif(\gamma,\delta)}\|S(t)v_0\|_{E^{\delta}}<\frac{\eta}{2}$ for $t\in(0,t_\eta)$.
Then we get by \eqref{e:LINEAREST}
$$t^{\dif(\gamma,\delta)}\|S(t)u_0\|_{E^{\delta}}\leq t^{\dif(\gamma,\delta)}\|S(t)(u_0-v_0)\|_{E^{\delta}}+t^{\dif(\gamma,\delta)}\|S(t)v_0\|_{E^{\delta}}<\eta,\ t\in(0,t_\eta),$$
which proves the claim.
\end{proof}	

We now make an observation that a shift of a $\gamma$-solution is again a $\gamma$-solution, which is continuous at time zero. 

\begin{lem}\label{lem:SHIFT}
Let \eqref{e:EPSREG2} hold with $\rho_i\geq 1$, $\beta_i\in\J_0$ and $\alpha_i\in\J$ for $i\in\cali$
satisfying \eqref{e:SETUP1} and \eqref{e:CONDALPHAJ}
and assume that $\gamma\in\J_0$ satisfies \eqref{e:SETUP2}, \eqref{e:TOGAMMA} and \eqref{e:POSITIVEMUI}. 
Assume also that $\{S(t)\}_{t>0}$ is a~semigroup on $E^{\gamma}$ and on each $E^{\beta_i}$, $i\in\cali$.
Let $u$ be a $\gamma$-solution to \eqref{e:VCF}  corresponding to $u_0\in E^\gamma$ defined in $\calt_{\tau}$ for some $\tau\in(0,\infty]$. 
If $0<\theta<\tau$, then $v(\cdot)=u(\cdot+\theta)\in C(\calt_{\tau-\theta}\cup\{0\};E^\gamma\cap\bigcap\limits_{i\in\cali}E^{\alpha_i})$ is a~$\gamma$-solution of \eqref{e:VCF} on $\calt_{\tau-\theta}$ with $u(\theta)$ in the role of $u_0$, i.e.,
\begin{equation*}
v(t)=S(t)u(\theta)+\sum_{i\in\cali}\int_{0}^{t}S(t-s)\calf_i(v(s))ds\ \text{ for }t\in\calt_{\tau-\theta}.
\end{equation*}  
In particular, $v(t)\to u(\theta)$ in $E^\gamma\cap\bigcap\limits_{i\in\cali}E^{\alpha_i}$ as $t\to0^{+}$.
\end{lem}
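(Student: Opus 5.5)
The argument has three parts: continuity of $u$ on $\calt_\tau$, a shifted Duhamel identity obtained from the semigroup property, and a check that $v$ lies in $K(T)$ for every $T\in\calt_{\tau-\theta}$.

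\emph{Continuity.} By Corollary~\ref{cor:CONTINUITY}(a), $u\in\bigcap_{i\in\cali}C(\calt_\tau;E^{\alpha_i})$. Assumption \eqref{e:TOGAMMA} gives $\gamma\leadsto\gamma$ and $\beta_i\leadsto\gamma$. For Proposition~\ref{prop:PROPERTIES}(iii) with $\delta=\gamma$ we also need $\nu_i(\gamma)=1-\dif(\beta_i,\gamma)>0$. This follows from \eqref{e:SETUP1}, \eqref{e:SETUP2} and \eqref{e:CONDALPHAJ}: we have $\reg(\gamma)\leq\reg(\alpha_j)$ and $\reg(\alpha_j)-\reg(\beta_i)<1$, hence $\dif(\beta_i,\gamma)\leq\dif(\beta_i,\alpha_j)<1$. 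So $u\in C(\calt_\tau;E^\gamma)$. Therefore $v=u(\cdot+\theta)$ is continuous on $\calt_{\tau-\theta}\cup\{0\}$ with values in $E^\gamma\cap\bigcap_i E^{\alpha_i}$, and $v(t)\to u(\theta)$ as $t\to0^+$, because $\theta$ is an interior point of $\calt_\tau$. This proves the last assertion.

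\emph{Shifted Duhamel identity.} For $t\in\calt_{\tau-\theta}$, split the integral in \eqref{e:VCF} at time $t+\theta$ into $\int_0^\theta$ and $\int_\theta^{t+\theta}$. In the first piece, write $S(t+\theta-s)\calf_i(u(s))=S(t)S(\theta-s)\calf_i(u(s))$, using the semigroup property on $E^{\beta_i}$; likewise $S(t+\theta)u_0=S(t)S(\theta)u_0$ on $E^\gamma$. The key step is to pull $S(t)$ out of the Bochner integral. For this, regard $w_i:=\int_0^\theta S(\theta-s)\calf_i(u(s))\,ds$ as an element of $E^{\beta_i}$ and $S(t)$ as a bounded operator $E^{\beta_i}\to E^{\beta_i}$, or $E^{\beta_i}\to E^{\gamma}$ via $\beta_i\leadsto\gamma$.

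This is where I expect the main obstacle: $\int_0^\theta\|S(\theta-s)\calf_i(u(s))\|_{E^{\beta_i}}ds$ need not converge, since $\calf_i(u(s))$ may blow up like $s^{\mu_i-1}$ with no smoothing in $E^{\beta_i}$. I would circumvent this by cutting at $\eps>0$:
\[
\int_\eps^\theta S(t)S(\theta-s)\calf_i(u(s))\,ds=S(t)\int_\eps^\theta S(\theta-s)\calf_i(u(s))\,ds .
\]
Here the integrand is integrable in $E^{\beta_i}$ on $[\eps,\theta]$, since $u$ is bounded there and $\sup_{r\le T}\|S(r)\|_{\mathcal L(E^{\beta_i})}<\infty$ by \eqref{e:LINEAREST} with $\beta_i\leadsto\beta_i$ if available. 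If that bound is not available, I would instead work directly in $E^\gamma$, taking the integral $\int_\eps^\theta S(\theta-s)\calf_i(u(s))\,ds$ in $E^\gamma$ via $\beta_i\leadsto\gamma$ and using $S(t)\in\mathcal L(E^\gamma)$ from $\gamma\leadsto\gamma$.

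Then let $\eps\to0^+$. On the left, convergence in $E^{\alpha_j}$ follows from Lemma~\ref{lem:BASIC0}(ii) with $\delta=\alpha_j$. On the right, the inner integral converges in $E^\gamma$ by Lemma~\ref{lem:BASIC0}(ii) with $\delta=\gamma$, which is valid since $\nu_i(\gamma)>0$, and $S(t)$ is continuous on $E^\gamma$. The two limits coincide by \ref{a:B1} and \ref{a:B2}. Summing over $i$ gives
\[
S(t+\theta)u_0+\sum_{i\in\cali}\int_0^\theta S(t+\theta-s)\calf_i(u(s))\,ds=S(t)u(\theta).
\]
In the second piece, substitute $s=\theta+r$ to get $\int_0^tS(t-r)\calf_i(v(r))\,dr$. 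This yields the displayed equation for $v$.

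\emph{Membership in $K(T)$.} Let $T\in\calt_{\tau-\theta}$. Then $\|v(t)\|_{E^{\alpha_i}}$ is bounded on $(0,T]$, because $u$ is continuous on the compact interval $[\theta,\theta+T]\subset\calt_\tau$. Hence $t^{\dif(\gamma,\alpha_i)}\|v(t)\|_{E^{\alpha_i}}$ is bounded, so $v\in K(T)$ and $v$ is a $\gamma$-solution with initial value $u(\theta)\in E^\gamma$.
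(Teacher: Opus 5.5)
Your proof is correct and follows the paper's argument. Continuity in $E^\gamma\cap\bigcap_{i\in\cali}E^{\alpha_i}$ comes from Corollary~\ref{cor:CONTINUITY}, using $\dif(\beta_i,\gamma)<1$ from \eqref{e:SETUP2} and \eqref{e:CONDALPHAJ}; membership in $K(T)$ comes from boundedness on compact subintervals; and the shifted Duhamel identity comes from the semigroup property together with $S(t)\in\mathcal{L}(E^\gamma;E^\gamma)$. Your $\eps$-cutoff is harmless but unnecessary, and the worry behind it is misplaced because $s^{\mu_i-1}$ is integrable when $\mu_i>0$. By Lemma~\ref{lem:BASIC0}~(ii) with $\delta=\gamma$, the integral $\int_0^\theta S(\theta-s)\calf_i(u(s))\,ds$ already converges absolutely in $E^\gamma$, so $S(t)$ can be pulled out directly, which is exactly what the paper does.
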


\begin{proof}
Note that by \eqref{e:SETUP2} and \eqref{e:CONDALPHAJ} we have 
$$1-\dif(\beta_i,\gamma)=1-\dif(\beta_i,\alpha_i)+\dif(\gamma,\alpha_i)>0\ \text{ for all }i\in\cali.$$ 
Therefore, by Corollary~\ref{cor:CONTINUITY}~(a),~(b) we know that $v\in C(\calt_{\tau-\theta}\cup\{0\};E^\gamma\cap\bigcap\limits_{i\in\cali}E^{\alpha_i})$.
In particular, for every $T\in\calt_{\tau}$ we have $v\in\bigcap\limits_{i\in\cali}{\mathcal{L}}^{\infty}_{\dif(\gamma,\alpha_i)}((0,T-\theta];E^{\alpha_i})$
and for $t\in\calt_{\tau-\theta}$
\begin{equation*}
\begin{split}
v(t)=u(t+\theta)&=S(t+\theta)u_0+\sum_{i\in\cali}\int_{0}^{t+\theta}S(t+\theta-s)\calf_{i}(u(s))ds\\
&=S(t)u(\theta)+\sum_{i\in\cali}\int_{0}^{t}S(t-s)\calf_i(v(s))ds,
\end{split}
\end{equation*}
where we used the fact that $S(t)\in\mathcal{L}(E^\gamma;E^\gamma)$ for $t>0$.
Thus $u(\cdot+\theta)$ is a $\gamma$-solution of \eqref{e:VCF} on $\calt_{\tau-\theta}$ with $u(\theta)$ in the role of $u_0$.
\end{proof}

\section{Existence and uniqueness of $\gamma$-solutions}\label{sec:EXIST}

Following the approach of \cite{Q15,CHQRB17}, in the vein of \cite[Theorem 2.2]{ACRB1999} we prove the local existence and uniqueness of $\gamma$-solutions of \eqref{e:VCF} announced in Theorem~\ref{thm:MAIN1}.
Given the initial condition $u_0\in E^\gamma$, the proof will be based on the fixed point argument for the map $H(\cdot,u_0)$ from \eqref{e:DEFH} in a suitable closed ball
$K(\tau,\mu)=\{u\in K(\tau)\colon \norm{u}_{K(\tau)}\leq\mu\}$
of the Banach space $K(\tau)=\bigcap\limits_{i\in\cali}\mathcal{L}^{\infty}_{\dif(\gamma,\alpha_i)}((0,\tau];E^{\alpha_i})$
endowed with the norm
$$\norm{u}_{K(\tau)}=\max_{i\in\cali}\|u\|_{\alpha_i,\tau}=\max_{i\in\cali}\sup_{t\in(0,\tau]}t^{\dif(\gamma,\alpha_i)}\|u(t)\|_{E^{\alpha_i}}\ \text{ for }u\in K(\tau).$$

\begin{thm}\label{thm:EXIST}
Let $\calf_{i}$ satisfy \eqref{e:EPSREG}, \eqref{e:EPSREG2} with $c_0>0$, $\rho_i\geq 1$, $\alpha_i\in\J$, $\beta_i\in\J_0$ for $i\in\cali$ such that \eqref{e:SETUP1} and \eqref{e:CONDALPHAJ} hold. Assume that $\gamma\in\J_0$ satisfies \eqref{e:SETUP2}, \eqref{e:POSITIVEMUI} and \eqref{e:POSITIVEOMEGA} with $\mu_i>0$ and $\omega_i>0$ for all $i\in\cali$.
Then, given $R>0$, $t_0>0$ and
\begin{equation}\label{e:MU}
\mu\geq 3R\max_{j\in\cali}M(\gamma,\alpha_j,t_0)>0,
\end{equation} 
there exists $\tau\in(0,t_0]$ such that for any $u_0\in E^\gamma$ with $\norm{u_0}_{E^\gamma}\leq R$ there exists a $\gamma$-solution $u$ to \eqref{e:VCF} on $(0,\tau]$, which belongs to $K(\tau,\mu)$. The solution is unique in $K(\tau,\mu)$ and locally unique in time. 
Moreover, there exists $\kappa\in(0,1)$ such that if $\norm{u_0}_{E^\gamma}\leq R$, $\norm{v_0}_{E^\gamma}\leq R$ then we have
\begin{equation}\label{e:LIPINK}
\norm{u-v}_{K(\tau)}\leq\tfrac{1}{1-\kappa}\max_{i\in\cali}M(\gamma,\alpha_i,\tau)\norm{u_0-v_0}_{E^{\gamma}},
\end{equation}
where $u,v\in K(\tau,\mu)$ are $\gamma$-solutions on $(0,\tau]$ corresponding to $u_0,v_0$, respectively. We also have
\begin{equation}\label{e:CONVGAMMATHETANEW}
\norm{u}_{K(t)}\to 0\ \text{ as }t\to 0^{+},
\end{equation}
provided that  
\begin{equation}\label{e:CONVSNEW}
\norm{S(\cdot)u_0}_{K(t)}\to 0\ \text{ as }t\to 0^{+}.
\end{equation}
Furthermore, for $\delta\in\J$ satisfying \eqref{e:TODELTA1} and \eqref{e:THETA} with $\nu_i(\delta)>0$ for all $i\in\cali$, 
\begin{itemize}
\item[(a)] we have
\begin{equation}\label{e:LIPSCHITZ}
t^{\dif(\gamma,\delta)}\norm{u(t)-v(t)}_{E^{\delta}}\leq C_{\delta,\tau}\norm{u_0-v_0}_{E^{\gamma}},\ t\in(0,\tau],
\end{equation}
with 
$$C_{\delta,\tau}=M(\gamma,\delta,\tau)+\tfrac{c_0}{1-\kappa}\max_{j\in\cali}M(\gamma,\alpha_j,\tau)\sum_{i\in\cali}M(\beta_i,\delta,\tau)\kappa_i(\tau,\delta),$$
where
\begin{equation}\label{e:KAPPAI}
\kappa_i(\tau,\delta):=[\tau^{1-\dif(\beta_i,\alpha_i)}+(\|u\|_{\alpha_i,\tau}^{\rho_i-1}+\|v\|_{\alpha_i,\tau}^{\rho_i-1})
\tau^{\omega_i}]B(\nu_i(\delta),\mu_i),
\end{equation}
\item[(b)] we have
\begin{equation}\label{e:CONVGAMMATHETA3}
t^{\dif(\gamma,\delta)}\norm{u(t)}_{E^{\delta}}\to 0\ \text{ as }t\to 0^{+},
\end{equation}
provided that  
\begin{equation}\label{e:CONVS2}
t^{\dif(\gamma,\delta)}\norm{S(t)u_0}_{E^{\delta}}\to 0\ \text{ as }t\to 0^{+}.
\end{equation}
\end{itemize}
If, additionally, $\{S(t)\}_{t>0}$ is a semigroup on $E^{\gamma}$ and on each $E^{\beta_i}$, $i\in\cali$, then
\begin{itemize}
\item[(i)] $u\in\bigcap\limits_{i\in\cali}C((0,\tau];E^{\alpha_i})$,
\item[(ii)] $u\in C((0,\tau];E^\delta)$ provided that $\delta\in\J$ satisfies \eqref{e:TODELTA1} and \eqref{e:THETA},
\item[(iii)] $u\in C((0,\tau];E^\gamma)$ provided that \eqref{e:TOGAMMA} holds,
\item[(iv)] $u\in C([0,\tau];E^\gamma)$ with $u(0)=u_0$ provided that \eqref{e:TOGAMMA} and \eqref{e:STC0} hold.
\end{itemize}
\end{thm}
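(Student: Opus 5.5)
The plan is a Banach fixed point argument for $H(\cdot,u_0)$ from \eqref{e:DEFH} on the closed ball $K(\tau,\mu)$ of $K(\tau)$, where $\tau\in(0,t_0]$ is chosen small and uniform for $\norm{u_0}_{E^\gamma}\leq R$.

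\emph{Self-mapping.} Apply Lemma~\ref{lem:REGH}(i) with $\delta=\alpha_j$ for each $j\in\cali$. This is allowed because \eqref{e:SETUP2} gives \eqref{e:TODELTA1}, and \eqref{e:SETUP1} with \eqref{e:CONDALPHAJ} gives \eqref{e:THETA} with $\nu_i(\alpha_j)=1-\dif(\beta_i,\alpha_j)>0$. Since $\nu_i(\alpha_j)+\dif(\gamma,\alpha_j)=1+\reg(\beta_i)-\reg(\gamma)\geq\omega_i>0$ and $M$ is nondecreasing in $T$, \eqref{e:OSZHINL} yields for $u\in K(\tau,\mu)$
$$\norm{H(u,u_0)}_{\alpha_j,\tau}\leq M(\gamma,\alpha_j,t_0)R+c_0\sum_{i\in\cali}M(\beta_i,\alpha_j,t_0)\Big[\tfrac{\tau^{1+\reg(\beta_i)-\reg(\gamma)}}{\nu_i(\alpha_j)}+\mu^{\rho_i}\tau^{\omega_i}B(\nu_i(\alpha_j),\mu_i)\Big].$$
By \eqref{e:MU} the first term is at most $\mu/3$. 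Because all exponents of $\tau$ are positive, choosing $\tau$ small makes the bracket sum at most $\mu/3$. Hence $H(\cdot,u_0)$ maps $K(\tau,\mu)$ into itself.

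\emph{Contraction.} By Lemma~\ref{lem:BASIC1} with $\delta=\alpha_j$, and noting $\reg(\alpha_j)-\reg(\gamma)=\dif(\gamma,\alpha_j)$,
$$\norm{H(u,u_0)-H(v,v_0)}_{\alpha_j,\tau}\leq M(\gamma,\alpha_j,\tau)\norm{u_0-v_0}_{E^\gamma}+c_0\sum_{i}M(\beta_i,\alpha_j,t_0)\kappa_i(\tau,\alpha_j)\norm{u-v}_{K(\tau)},$$
where $\kappa_i$ is as in \eqref{e:KAPPAI} and $\norm{u}_{\alpha_i,\tau},\norm{v}_{\alpha_i,\tau}\leq\mu$. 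The exponents $1-\dif(\beta_i,\alpha_i)$ and $\omega_i$ are positive, so shrinking $\tau$ further makes $\kappa:=\max_j c_0\sum_iM(\beta_i,\alpha_j,t_0)\kappa_i(\tau,\alpha_j)<1$. Banach's theorem then gives a unique fixed point in $K(\tau,\mu)$, and taking the maximum over $j$ in the displayed difference estimate gives \eqref{e:LIPINK}.

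\emph{Local uniqueness in time.} Suppose $w$ is another $\gamma$-solution on $(0,\tau]$. Then $\norm{w}_{K(t)}<\infty$, but it need not be $\leq\mu$. I expect this to be the main obstacle. I would first try the following: run the same estimates on $(0,t]$ with the ball radius $\mu'=\max(\mu,\norm{w}_{K(\tau)})$. For $t$ small the contraction constant is below $1$, so $u=w$ on $(0,t]$. This is exactly the "locally unique" statement; global uniqueness is deferred to the semigroup case.

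\emph{Remaining assertions.} For \eqref{e:CONVGAMMATHETANEW}, use the self-map bound on $(0,t]$ and absorb: $\norm{u}_{K(t)}\leq\norm{S(\cdot)u_0}_{K(t)}+o(1)$, since the nonlinear terms carry positive powers of $t$ times the bounded quantity $\mu^{\rho_i}$. Part (b) follows the same way from \eqref{e:OSZHINL}, or directly from Proposition~\ref{prop:PROPERTIES}(i). For part (a), apply Lemma~\ref{lem:BASIC1} with general $\delta$, together with \eqref{e:LINEAREST} for $S(t)(u_0-v_0)$ and \eqref{e:LIPINK}; this gives $C_{\delta,\tau}$. Finally, (i)–(iii) follow from Corollary~\ref{cor:CONTINUITY} and Proposition~\ref{prop:PROPERTIES}(iii). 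For (iii) one checks $\dif(\beta_i,\gamma)<1$ as in the proof of Lemma~\ref{lem:SHIFT}. Item (iv) follows from Corollary~\ref{cor:CONTINUITY}(b) using \eqref{e:POSITIVEOMEGA}.
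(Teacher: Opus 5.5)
Your proposal is correct and follows essentially the paper's proof. Both arguments run a Banach fixed point for $H(\cdot,u_0)$ on $K(\tau,\mu)$ via Lemma~\ref{lem:REGH} and Lemma~\ref{lem:BASIC1} with $\delta=\alpha_j$, get local uniqueness by redoing the contraction estimate with an enlarged radius on a shorter interval (the paper's Step~3 with $\bar\mu,\bar\tau$), and obtain the remaining items from Proposition~\ref{prop:PROPERTIES} and Corollary~\ref{cor:CONTINUITY}; your uniqueness step is slightly leaner, since comparing two given fixed points needs only the contraction bound, not the self-mapping condition.
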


\begin{proof} 
\textit{Step 1. Complete metric space.}   
Fix $R>0$, $t_0>0$ and, recalling \eqref{e:SETUP2}, let $\mu>0$ satisfy \eqref{e:MU}. By \eqref{e:CONDALPHAJ} we have $\nu_i(\alpha_j)=1-\dif(\beta_i,\alpha_j)>0$ for $i,j\in\cali$.
Since $\mu_i, \omega_i>0$ for all $i\in\cali$ by \eqref{e:POSITIVEMUI} and \eqref{e:POSITIVEOMEGA}, recalling \eqref{e:SETUP1}, we choose $\tau\in(0,t_0]$ so small that 
\begin{equation}\label{e:TAU1}
3\abs{\cali}c_0\max_{i,j\in\cali}M(\beta_i,\alpha_j,\tau)\mu^{\rho_i-1}\tau^{\omega_i}B(\nu_i(\alpha_j),\mu_i)\leq1,
\end{equation}
\begin{equation}\label{e:TAU0B}
3\abs{\cali}c_0\max_{i,j\in\cali}\tau^{\nu_i(\alpha_j)+\dif(\gamma,\alpha_j)}\tfrac{M(\beta_i,\alpha_j,\tau)}{\nu_i(\alpha_j)}\leq\mu
\end{equation}
and
\begin{equation}\label{e:TAUCONTR}
3\abs{\cali}c_0\max_{i,j\in\cali}M(\beta_i,\alpha_j,\tau)\tau^{1-\dif(\beta_i,\alpha_i)}B(\nu_i(\alpha_j),\mu_i)<1.
\end{equation}
Using this $\mu$, we define the closed ball $K(\tau,\mu)$ in the Banach space $K(\tau)$.

\noindent 
\textit{Step 2. Transformation and its fixed point.}  
Let $u_0\in E^\gamma$ be such that $\|u_0\|_{E^\gamma}\leq R$. 
Recalling \eqref{e:DEFH}, we consider
\begin{equation}\label{e:MAP}
H(u,u_0)(t)=S(t) u_0+\sum_{i\in\cali}\int_{0}^{t}S(t-s)\calf_i(u(s))ds,\ t\in(0,\tau],\ u\in K(\tau,\mu).
\end{equation}
Note that $H(\cdot,u_0)\colon K(\tau,\mu)\to K(\tau,\mu)$ is well defined. Indeed, by Lemma~\ref{lem:REGH}~(ii) for $u\in K(\tau,\mu)$ we know that $H(u,u_0)\in K(\tau)$ and, by \eqref{e:SETUP2} and \eqref{e:OSZHINL} with $\delta=\alpha_j$, we get
\begin{equation*}
\begin{split}
t^{\dif(\gamma,\alpha_j)}\norm{H(u,u_0)(t)}_{E^{\alpha_j}}&\leq M(\gamma,\alpha_j,t_0)R+c_0\sum_{i\in\cali}\tau^{\nu_i(\alpha_j)+\dif(\gamma,\alpha_j)}\tfrac{M(\beta_i,\alpha_j,\tau)}{\nu_i(\alpha_j)}\\
&+c_0\sum_{i\in\cali}M(\beta_i,\alpha_j,\tau)\mu^{\rho_i}\tau^{\omega_i}B(\nu_i(\alpha_j),\mu_i),\ t\in(0,\tau],\ j\in\cali.
\end{split}
\end{equation*}
Then by the choice of $\mu$ and $\tau$ in \eqref{e:MU}, \eqref{e:TAU1} and \eqref{e:TAU0B} we obtain $\norm{H(u,u_0)}_{K(\tau)}\leq\mu$.

The map $H(\cdot,u_0)$ is a strict contraction on $K(\tau,\mu)$. Indeed, using Lemma~\ref{lem:BASIC1} with $\delta=\alpha_j$, $j\in\cali$, we get
\begin{equation*}
\norm{H(u,u_0)-H(v,u_0)}_{K(\tau)}\leq\kappa\norm{u-v}_{K(\tau)},\ u,v\in  K(\tau,\mu),  
\end{equation*}
with
\begin{equation}\label{e:KAPPA}
\begin{split}
\kappa=c_0\sum_{i\in\cali}\max_{j\in\cali}M(\beta_i,\alpha_j,\tau)&\bigl[\tau^{1-\dif(\beta_i,\alpha_i)}+2\mu^{\rho_i-1}\tau^{\omega_i}\bigr]B(\nu_i(\alpha_j),\mu_i).
\end{split}
\end{equation}
By \eqref{e:TAU1} and \eqref{e:TAUCONTR} we know that $\kappa<1$. Therefore, by the Banach fixed point theorem there exists a unique fixed point $u\in K(\tau,\mu)$ of $H(\cdot,u_0)$, i.e., 
\begin{equation*}
u(t)=H(u,u_0)(t)=S(t)u_0+\sum_{i\in\cali}\int_{0}^{t}S(t-s)\calf_i(u(s))ds,\ t\in (0,\tau].
\end{equation*}
Thus $u$ is a $\gamma$-solution of $\eqref{e:VCF}$ on $(0,\tau]$, which is unique in $K(\tau,\mu)$.

\noindent
\textit{Step 3. Local uniqueness in time.} Suppose that $u_1,u_2$ are two $\gamma$-solutions of \eqref{e:VCF} with $u_0\in E^{\gamma}$ on $(0,\tau]$. Taking in Step 1 $t_0=\tau$, $R=\norm{u_0}_{E^{\gamma}}$ and     
$$\mu=\bar\mu=\max\{3\max_{j\in\cali}M(\gamma,\alpha_j,\tau)\|u_0\|_{E^\gamma}, \max_{k=1,2}\max_{i\in\cali}\norm{u_k}_{\alpha_i,\tau}\},$$
we choose $0<\bar\tau\leq\tau$ so small that \eqref{e:TAU1}--\eqref{e:TAUCONTR} hold with $\bar\tau$ and $\bar{\mu}$ in the roles of $\tau$ and $\mu$.  Since $u_1,u_2\in K(\bar\tau,\bar\mu)$ are $\gamma$-solutions of \eqref{e:VCF} on $(0,\bar\tau]$, they are fixed points in $K(\bar\tau,\bar\mu)$ of $H(\cdot,u_0)$ given in \eqref{e:MAP} and thus must coincide on $(0,\bar\tau]$, i.e., $u_1(t)=u_2(t)$ for $t\in(0,\bar\tau]$.

\noindent 
\textit{Step 4. Lipschitz dependence of $\gamma$-solutions on $u_0$ in bounded subsets of $E^\gamma$.} Let $u_0, v_0\in E^{\gamma}$ be such that $\norm{u_0}_{E^\gamma},\norm{v_0}_{E^\gamma}\leq R$ and let $u,v\in K(\tau,\mu)$ be the corresponding $\gamma$-solutions on $(0,\tau]$ constructed in Steps 1--2.

If $\delta\in\J$ satisfies \eqref{e:TODELTA1} and \eqref{e:THETA}, then again by Lemma~\ref{lem:BASIC1} we have for $t\in(0,\tau]$
\begin{equation*}
\begin{split}
t^{\dif(\gamma,\delta)}\norm{u(t)-v(t)}_{E^{\delta}}&\leq M(\gamma,\delta,\tau)\norm{u_0-v_0}_{E^{\gamma}}+c_0\sum_{i\in\cali}M(\beta_i,\delta,\tau)\kappa_i(\tau,\delta)\norm{u-v}_{K(\tau)}
\end{split}
\end{equation*} 
with $\kappa_i(\tau,\delta)$ given in \eqref{e:KAPPAI}. By \eqref{e:SETUP1}, \eqref{e:CONDALPHAJ} we can take $\delta=\alpha_j$ for $j\in\cali$ and, thanks to \eqref{e:TAU1} and \eqref{e:TAUCONTR}, we obtain
\begin{equation*}
\norm{u-v}_{K(\tau)}\leq\max_{j\in\cali}M(\gamma,\alpha_j,\tau)\norm{u_0-v_0}_{E^{\gamma}}+\kappa\norm{u-v}_{K(\tau)}
\end{equation*} 
with $\kappa\in(0,1)$ given in \eqref{e:KAPPA}. This yields \eqref{e:LIPINK} and in consequence \eqref{e:LIPSCHITZ}. 

\noindent 
\textit{Step 5. Profile at $t=0$.} Note that \eqref{e:CONVS2} implies \eqref{e:CONVGAMMATHETA3} by Proposition~\ref{prop:PROPERTIES}. In fact, taking $\delta=\alpha_j$ for $j\in\cali$ in \eqref{e:CONV1} we obtain \eqref{e:CONVGAMMATHETANEW} if \eqref{e:CONVSNEW} holds.

\noindent 
\textit{Step 6. Continuity of $u$.}
Assume now that $\{S(t)\}_{t>0}$ is a semigroup on $E^{\gamma}$ and on each $E^{\beta_i}$ for $i\in\cali$.
Then $(ii)$ follows directly from Proposition~\ref{prop:PROPERTIES}, whereas $(i)$, $(iii)$ and $(iv)$ are consequences of Corollary~\ref{cor:CONTINUITY}, since $\dif(\beta_i,\gamma)=\dif(\beta_i,\alpha_i)-\dif(\gamma,\alpha_i)<1$  for $i\in\cali$ 
by \eqref{e:SETUP2} and \eqref{e:CONDALPHAJ}.
\end{proof}

Under the additional assumption of Theorem~\ref{thm:EXIST} the $\gamma$-solution is in fact unique there.

\begin{thm}\label{thm:UNIQUENESS}
Let \eqref{e:EPSREG}, \eqref{e:EPSREG2}, \eqref{e:SETUP1} and \eqref{e:CONDALPHAJ} hold and let $\gamma\in\J_0$ satisfy \eqref{e:SETUP2} and \eqref{e:POSITIVEMUI}. 
Assume that $\{S(t)\}_{t>0}$ is a semigroup on $E^{\gamma}$ and on each $E^{\beta_i}$ for $i\in\cali$.
If $u_1,u_2$ are two $\gamma$-solutions of \eqref{e:VCF} with $u_0\in E^{\gamma}$ on $(0,T]$ with some $T\in(0,\infty)$ 
and they coincide on $(0,\bar\tau]$ for some $\bar{\tau}\in(0,T)$, i.e., $u_1(t)=u_2(t)$ for $t\in(0,\bar\tau]$,
then they coincide on $(0,T]$.	
\end{thm}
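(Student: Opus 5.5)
The plan is a standard continuation argument. Let
$$t^*:=\sup\{t\in(0,T]\colon u_1=u_2\ \text{on}\ (0,t]\}\geq\bar\tau.$$
We suppose, for contradiction, that $t^*<T$.

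First, by Corollary~\ref{cor:CONTINUITY}~(a) (using \eqref{e:SETUP1}, \eqref{e:SETUP2}, \eqref{e:CONDALPHAJ} and the semigroup assumption), both $u_1$ and $u_2$ lie in $\bigcap_{i\in\cali}C((0,T];E^{\alpha_i})$. Hence $u_1(t^*)=u_2(t^*)$, so the two solutions agree on $(0,t^*]$.

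Next we restrict to times after $t^*$. We do not shift to a new initial datum $u(t^*)$; that would need \eqref{e:TOGAMMA}, which is not assumed here. Instead we work directly with the difference $w=u_1-u_2$ on $[t^*,t^*+h]$. For $t\in(t^*,T]$, both satisfy \eqref{e:VCF} with the same $u_0$, and the integrands coincide on $(0,t^*)$. Therefore
$$w(t)=\sum_{i\in\cali}\int_{t^*}^{t}S(t-s)\bigl(\calf_i(u_1(s))-\calf_i(u_2(s))\bigr)ds .$$
On $[t^*,T]$ the functions $u_k$ are bounded in each $E^{\alpha_i}$, since $t^*>0$ and $u_k\in K(T)$. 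Let $N$ bound $\norm{u_k(s)}_{E^{\alpha_i}}$ there. By \eqref{e:EPSREG} and \eqref{e:LINEAREST} with $\beta_i\leadsto\alpha_j$, for $t\in(t^*,t^*+h]$ and every $j$,
$$\norm{w(t)}_{E^{\alpha_j}}\leq c_0(1+2N^{\rho_i-1})\sum_{i\in\cali}M(\beta_i,\alpha_j,T)\int_{t^*}^{t}(t-s)^{\nu_i(\alpha_j)-1}\max_{k}\norm{w(s)}_{E^{\alpha_k}}ds.$$
Set $W(h)=\sup_{s\in(t^*,t^*+h]}\max_k\norm{w(s)}_{E^{\alpha_k}}$. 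This is finite because of continuity and boundedness of $u_k$ on compact subintervals. The estimate gives
$$W(h)\leq C\sum_{i,j}\frac{h^{\nu_i(\alpha_j)}}{\nu_i(\alpha_j)}\,W(h),$$
and $\nu_i(\alpha_j)>0$ by \eqref{e:CONDALPHAJ}. For $h$ small the coefficient is below $1$, which forces $W(h)=0$. Then $u_1=u_2$ on $(0,t^*+h]$, contradicting the definition of $t^*$. Hence $t^*=T$, and the solutions coincide on $(0,T]$.

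The main subtlety is justifying measurability and finiteness of the integrals and of $W$, together with the consistency of the values of $S(t-s)$ across different spaces (assumptions \ref{a:B1}, \ref{a:B2}). Both follow from the continuity supplied by Corollary~\ref{cor:CONTINUITY}. Note also that \eqref{e:EPSREG} is used with the sup bound $N$, which avoids the singular weights near $0$. This is why we localize away from $t=0$, and it is why the hypothesis of agreement on $(0,\bar\tau]$ is needed at all.
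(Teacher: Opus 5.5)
Your proof is correct. It shares the paper's core ingredients. After the agreement time the term $S(t)u_0$ cancels, so the difference satisfies $w(t)=\sum_{i}\int_{t^*}^{t}S(t-s)\bigl(\calf_i(u_1(s))-\calf_i(u_2(s))\bigr)ds$, and \eqref{e:TOGAMMA} is never needed. The solutions are bounded in each $E^{\alpha_i}$ away from $t=0$. Finally, \eqref{e:EPSREG} together with \eqref{e:SETUP1} and \eqref{e:CONDALPHAJ} gives integrable kernels $(t-s)^{-\dif(\beta_i,\alpha_j)}$.

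The genuine difference is the closing step. The paper works on the whole interval at once:
\begin{itemize}
\item it shifts once by $\bar\tau$ and takes boundedness of $u_k(\cdot+\bar\tau)$ on $[0,T-\bar\tau]$ from the continuity in Corollary~\ref{cor:CONTINUITY}~(a);
\item it collapses all kernels into the single worst one $(t-s)^{-\dif(\beta_*,\alpha^*)}$, acting on $\sum_i\norm{z(t)}_{E^{\alpha_i}}$, where $\reg(\beta_*)=\min_i\reg(\beta_i)$ and $\reg(\alpha^*)=\max_i\reg(\alpha_i)$;
\item it applies the singular Volterra-type inequality \cite[Lemma~1.2.9]{C-D} to get $z\equiv 0$ on all of $[0,T-\bar\tau]$.
\end{itemize}

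You instead run a continuation argument on $t^*$. You close with a smallness estimate on a short interval $(t^*,t^*+h]$, where the coefficient $\sum_{i,j}h^{\nu_i(\alpha_j)}/\nu_i(\alpha_j)$ drops below $1$. Your route is self-contained: it needs neither an external Gronwall-type lemma nor any unification of the exponents. The price is the sup/continuation bookkeeping. The paper's route is shorter once the lemma is available.

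Your version also barely needs the semigroup hypothesis. Boundedness on $[t^*,T]$ already follows from the $K(T)$ weights, $\norm{u_k(s)}_{E^{\alpha_i}}\leq (t^*)^{-\dif(\gamma,\alpha_i)}\norm{u_k}_{\alpha_i,T}$. The identity $u_1(t^*)=u_2(t^*)$ can be read off Duhamel's formula directly, because the integrands agree on $(0,t^*)$.

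One cosmetic slip: the factor $c_0(1+2N^{\rho_i-1})$ depends on $i$. It should sit inside $\sum_i$, or be replaced by its maximum over $i\in\cali$.
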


\begin{proof} 
Suppose that $u_1,u_2$ are two $\gamma$-solutions of \eqref{e:VCF} with $u_0\in E^{\gamma}$ on $(0,T]$ with some $T\in(0,\infty)$,
which coincide on $(0,\bar\tau]$ for some $\bar{\tau}\in(0,T)$.
To show that they also coincide on $[\bar\tau,T]$, we define the shifts $\bar{u}_1(t):=u_1(t+\bar\tau)$ and  $\bar{u}_2(t):=u_2(t+\bar\tau)$ for $t\in[0,T-\bar\tau]$. Note that  $\bar{u}_1(0)=u_1(\bar\tau)=u_2(\bar\tau)=\bar{u}_2(0)$ and $\bar{u}_1,\bar{u}_2\in\bigcap\limits_{i\in\cali}C([0,T-\bar\tau];E^{\alpha_i})$ by Corollary~\ref{cor:CONTINUITY}~(a).
Setting $z:=\bar{u}_1-\bar{u}_2$, we have $z(0)=0$ and
\begin{equation*}
z(t)=\sum_{i\in\cali}\int_{0}^{t}S(t-s)\left(\calf_i(\bar{u}_1(s))-\calf_i(\bar{u}_2(s))\right)ds\ \text{ for }\ t\in[0,T-\bar\tau]. 	
\end{equation*}
Using \eqref{e:EPSREG} and \eqref{e:SETUP1}, for $j\in\cali$ and $t\in[0,T-\bar\tau]$ we get
\begin{equation*}
\|z(t)\|_{E^{\alpha_j}}\leq
\sum_{i\in\cali}\int_{0}^{t}\tfrac{c_0M(\beta_i,\alpha_j,T-\bar\tau)}{(t-s)^{\dif(\beta_i,\alpha_j)}}\Bigl(1+\|\bar{u}_1(s)\|_{E^{\alpha_i}}^{\rho_i-1}+\|\bar{u}_2(s)\|^{\rho_i-1}_{E^{\alpha_i}}\Bigr)\|z(s)\|_{E^{\alpha_i}}ds.
\end{equation*}
Since $\bar{u}_1,\bar{u}_2$ are bounded in each $E^{\alpha_i}$ on $[0,T-\bar\tau]$, we obtain 
\begin{equation*}
\|z(t)\|_{E^{\alpha_j}}\leq 
\sum_{i\in\cali}\int_{0}^{t}\tfrac{C_iM(\beta_i,\alpha_j,T-\bar\tau)}{(t-s)^{\dif(\beta_i,\alpha_j)}}\|z(s)\|_{E^{\alpha_i}}ds,\ t\in[0,T-\bar\tau],
\end{equation*}
with some $C_i>0$, $i\in\cali$. Let $\alpha^{*}\in\{\alpha_i\colon i\in\cali\}$, $\beta_{*}\in\{\beta_i\colon i\in\cali\}$ be such that 
$$\reg(\alpha^{*})=\max\{\reg(\alpha_i)\colon i\in\cali\}\ \text{ and }\ \reg(\beta_{*})=\min\{\reg(\beta_i)\colon i\in\cali\}.$$ 
Thus we conclude that
\begin{equation*}	
\sum_{i\in\cali}\|z(t)\|_{E^{\alpha_i}}\leq \abs{\cali}C_0\int_{0}^{t}\tfrac{1}{(t-s)^{\dif(\beta_{*},\alpha^{*})}}\sum_{i\in\cali}\|z(s)\|_{E^{\alpha_i}}ds, \ t\in[0,T-\bar\tau],
\end{equation*}
where $C_0=\max\limits_{i,j\in\cali}C_i M(\beta_i,\alpha_j,T-\bar\tau)(T-\bar\tau)^{\reg(\alpha^{*})-\reg(\alpha_j)}(T-\bar\tau)^{\reg(\beta_i)-\reg(\beta_{*})}$. 
Since $\dif(\beta_{*},\alpha^{*})<1$, by the Volterra type inequality (see \cite[Lemma~1.2.9]{C-D}) we obtain $z(t)=0$ in $[0,T-\bar{\tau}]$, i.e., $u_1=u_2$  on $(0,T]$.
\end{proof}

\begin{rem}
Regarding a relation of \eqref{e:POSITIVEOMEGA} with \eqref{e:CONDALPHAJ} and \eqref{e:POSITIVEMUI}, we observe that, given $i\in\cali$, we have $\omega_i=1-\rho_i\dif(\gamma,\alpha_i)+\reg(\beta_i)-\reg(\gamma)>0$ in each of the following cases:
\begin{itemize}
\item[(i)] $\rho_i>1$, $\rho_i\dif(\gamma,\alpha_i)<1$ and $\rho_i\dif(\beta_i,\alpha_i)\leq 1$,
\item[(ii)] $\rho_i=1$, $\dif(\gamma,\alpha_i)<1$ and $\dif(\beta_i,\alpha_i)<1$.
\end{itemize}
Thus if \eqref{e:CONDALPHAJ} and \eqref{e:POSITIVEMUI} hold then some $\omega_i$ can be zero only if $\rho_i>1$ and $\rho_i\dif(\beta_i,\alpha_i)>1$.
\end{rem}

The existence of a local $\gamma$-solution can be also proved if we replace assumption \eqref{e:POSITIVEOMEGA} in Theorem~\ref{thm:EXIST} by its weaker counterpart \eqref{e:NONNEGATIVEOMEGA}, hence admitting some of $\omega_i$'s to be zero.

\begin{thm}\label{thm:EXISTCRITICAL}
Under assumptions \eqref{e:EPSREG}, \eqref{e:EPSREG2}, \eqref{e:SETUP1} and \eqref{e:CONDALPHAJ} with $c_0>0$, $\rho_i>1$, $\alpha_i\in\J$ and $\beta_i\in\J_0$ for $i\in\cali$, assume that $\gamma\in\J_0$ satisfies \eqref{e:SETUP2}, \eqref{e:POSITIVEMUI} and \eqref{e:NONNEGATIVEOMEGA} with $\mu_i>0$ and $\omega_i\geq0$ for all $i\in\cali$.
Let $w_0\in E^\gamma$ be such that
\begin{equation}\label{e:SMALLW}
\max_{j\in\cali}t^{\dif(\gamma,\alpha_j)}\norm{S(t)w_0}_{E^{\alpha_j}}\to 0\text{ as }t\to 0^{+}.
\end{equation}
Then there exist $r,\tau,\mu\in(0,\infty)$ such that for any $u_0\in E^\gamma$ with $\norm{u_0-w_0}_{E^\gamma}\leq r$ there exists a $\gamma$-solution of \eqref{e:VCF} on $(0,\tau]$, which is unique in $K(\tau,\mu)$. We also have \eqref{e:CONVUTOZERO}.
If $\{S(t)\}_{t>0}$ is a semigroup on $E^{\gamma}$ and on each $E^{\beta_i}$ for $i\in\cali$, then $u$ satisfies assertions $(i)$--$(iv)$ from Theorem~\ref{thm:EXIST} and $u$ is unique if $u_0\in E^\gamma$ satisfies \eqref{e:CONVSNEW}.
\end{thm}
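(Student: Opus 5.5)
We run the same Banach fixed point argument for $H(\cdot,u_0)$ on a ball $K(\tau,\mu)$ as in the proof of Theorem~\ref{thm:EXIST}. The difference is that when some $\omega_i=0$, the factors $\tau^{\omega_i}$ in \eqref{e:TAU1} and \eqref{e:KAPPA} no longer become small as $\tau\to0^+$. The smallness must therefore come from the radius $\mu$ itself, using $\rho_i>1$. So, instead of taking $\mu\geq 3R\max_j M(\gamma,\alpha_j,t_0)$, we take $\mu$ small and control the linear part $S(\cdot)u_0$ in $K(\tau)$ by this small $\mu$. This is possible because of \eqref{e:SMALLW} and because $u_0$ is close to $w_0$.

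\emph{Choice of constants.} Fix $t_0=1$. First choose $\mu>0$ so small that
$$3\abs{\cali}c_0\max_{i,j\in\cali}M(\beta_i,\alpha_j,1)\mu^{\rho_i-1}B(\nu_i(\alpha_j),\mu_i)\leq\tfrac13 .$$
This is possible since $\rho_i>1$, and it gives \eqref{e:TAU1} with a margin for every $\tau\le1$, using $\tau^{\omega_i}\le1$ and $M$ nondecreasing in $T$. Next set $r:=\mu/(3\max_jM(\gamma,\alpha_j,1))$. By \eqref{e:SMALLW}, choose $\tau\in(0,1]$ such that $\max_j\sup_{t\le\tau}t^{\dif(\gamma,\alpha_j)}\norm{S(t)w_0}_{E^{\alpha_j}}\le\mu/3$, and such that \eqref{e:TAU0B} and \eqref{e:TAUCONTR} hold. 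These two only involve the positive exponents $\nu_i(\alpha_j)+\dif(\gamma,\alpha_j)$ and $1-\dif(\beta_i,\alpha_i)$, the latter positive by \eqref{e:CONDALPHAJ}. Then for $\norm{u_0-w_0}_{E^\gamma}\le r$ we get $\norm{S(\cdot)u_0}_{K(\tau)}\le\norm{S(\cdot)w_0}_{K(\tau)}+M r\le 2\mu/3$.

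\emph{Fixed point.} Apply Lemma~\ref{lem:REGH}, which needs only \eqref{e:NONNEGATIVEOMEGA}. The estimate \eqref{e:OSZHINL} with $\delta=\alpha_j$ gives
$$\norm{H(u,u_0)}_{K(\tau)}\le \tfrac{2\mu}{3}+\text{(inhomogeneous term)}+c_0\sum_i M\mu^{\rho_i}B.$$
The main hurdle is that the inhomogeneous term $c_0\sum_iM\tau^{\nu_i+\dif}/\nu_i$ must also be absorbed into the remaining $\mu/3$. To arrange this, shrink $\tau$ further so that this term is at most $\mu/6$, and make the $\mu^{\rho_i}$ term at most $\mu/6$ by the choice of $\mu$; the constants need to be redistributed accordingly. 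Then $H(\cdot,u_0)$ maps $K(\tau,\mu)$ into itself. Lemma~\ref{lem:BASIC1} gives the contraction constant \eqref{e:KAPPA}, whose $\tau^{1-\dif(\beta_i,\alpha_i)}$ part is small by the choice of $\tau$ and whose $2\mu^{\rho_i-1}\tau^{\omega_i}$ part is small by the choice of $\mu$, so $\kappa<1$. Banach's theorem yields a unique fixed point in $K(\tau,\mu)$. Property \eqref{e:CONVUTOZERO} then follows exactly as in Step 5, via \eqref{e:CONV1} with $\delta=\alpha_j$. Here the term $\norm{u}_{\alpha_i,t}^{\rho_i}t^{\omega_i}B\le \mu^{\rho_i-1}B\,\norm{u}_{K(t)}$ is absorbed into the left-hand side, because $\mu^{\rho_i-1}B$ is small, even though $\omega_i$ may be zero.

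\emph{Continuity and uniqueness.} Under the semigroup hypothesis, assertions (i)--(iv) follow verbatim from Corollary~\ref{cor:CONTINUITY} and Proposition~\ref{prop:PROPERTIES}(iii), since these only need \eqref{e:GAMMAALPHAI}, \eqref{e:POSITIVEMUI} and $\dif(\beta_i,\gamma)<1$. For uniqueness with $u_0$ satisfying \eqref{e:CONVSNEW}, let $u_1,u_2$ be $\gamma$-solutions. We cannot argue as in Step 3 of the proof of Theorem~\ref{thm:EXIST}, because shrinking $\tau$ no longer beats a large $\mu$. Instead, by \eqref{e:CONVUTOZERO} applied to each solution (using \eqref{e:CONV1} and \eqref{e:CONVSNEW}), we expect $\norm{u_k}_{K(t)}\to0$ as $t\to0^+$. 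Proving this for an arbitrary $\gamma$-solution is the second delicate point, and it again needs the absorption argument, which only works if $\norm{u_k}_{K(t)}$ is already small. I would instead apply the contraction directly: take $w_0:=u_0$ and a small $\bar\mu$, and note that $\norm{u_k}_{K(\bar\tau)}\le\bar\mu$ for small $\bar\tau$, which is exactly the $K(t)\to0$ property. Then both $u_k$ are fixed points in $K(\bar\tau,\bar\mu)$ and coincide on $(0,\bar\tau]$, and Theorem~\ref{thm:UNIQUENESS} extends the coincidence to the whole interval.
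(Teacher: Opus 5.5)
Your construction of the solution is the paper's: $\mu$ is made small using $\rho_i>1$, then $\tau$ is chosen from \eqref{e:SMALLW}, \eqref{e:TAU0B} and \eqref{e:TAUCONTR}, then $r$ is chosen, and \eqref{e:CONVUTOZERO} follows by absorbing the term $c_0\sum_i M\mu^{\rho_i-1}B\norm{u}_{K(t)}$ into the left-hand side. Only the split of $\mu$ into fractions differs, and your split closes once the inhomogeneous term is made at most $\mu/6$.

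The genuine gap is the uniqueness step, and it is circular. You correctly note that absorption only works for a solution already known to have small $\norm{u_k}_{K(t)}$. Your ``direct'' alternative then assumes $\norm{u_k}_{K(\bar\tau)}\leq\bar\mu$ for small $\bar\tau$, which is that very property. For an arbitrary $\gamma$-solution $v$, set $\ell=\lim_{t\to0^+}\norm{v}_{K(t)}$. Then \eqref{e:CONV1} with $\delta=\alpha_j$ only gives $\ell\leq C\sum_{i\colon\omega_i=0}\ell^{\rho_i}$, and this inequality allows $\ell$ to be positive. The paper's proof makes the same leap: it asserts $\norm{u_k}_{K(t)}\to0$ ``due to \eqref{e:CONVUTOZERO}'', but \eqref{e:CONVUTOZERO} was derived only for the fixed point in $K(\tau,\mu)$.

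This gap cannot be closed. Work in the Lebesgue scale with $\reg(p)=-\frac{N}{2p}$ and the heat semigroup. Take $\calf(u)=|u|^{\rho-1}u\colon L^a(\R^N)\to L^{a/\rho}(\R^N)$ with $\rho>1+\frac2N$ (and $\rho<\frac{N+2}{N-2}$ if $N\geq3$), $\gamma=\frac{N(\rho-1)}{2}$ and $a\in(\max\{\gamma,\rho\},\rho\gamma)$. All hypotheses, including the semigroup ones, hold with $\omega=0$, and $u_0=w_0=0$ satisfies \eqref{e:CONVSNEW}. Yet the Haraux--Weissler self-similar solution $u(t,x)=t^{-1/(\rho-1)}U(x/\sqrt{t})$, with rapidly decaying profile $U$, solves Duhamel's formula with zero data. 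It satisfies $t^{\dif(\gamma,a)}\norm{u(t)}_{L^a(\R^N)}=\norm{U}_{L^a(\R^N)}>0$, so it is a second $\gamma$-solution besides $u\equiv0$. What your argument (and the paper's) actually proves is uniqueness among $\gamma$-solutions $v$ with $\norm{v}_{K(t)}\to0$ as $t\to0^+$, combined with Theorem~\ref{thm:UNIQUENESS}.

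A similar slip affects (iv). Corollary~\ref{cor:CONTINUITY} gives continuity at $t=0$ only under \eqref{e:POSITIVEOMEGA}, which fails here. In the critical case, Lemma~\ref{lem:BASIC0}(ii) with $\delta=\gamma$ bounds the $\omega_i=0$ contributions to $\norm{u(t)-u_0}_{E^\gamma}$ by $C\norm{u}_{\alpha_i,t}^{\rho_i}$. This vanishes as $t\to0^+$ under \eqref{e:CONVSNEW}, but is otherwise only $O(\mu^{\rho_i})$.
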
 

\begin{proof}
Fix $t_0>0$ and choose $\mu>0$ so small that
\begin{equation}\label{e:NEWMU}
3|\cali|c_0\max_{i,j\in\cali}M(\beta_i,\alpha_j,t_0)\mu^{\rho_i-1}t_0^{\omega_i}B(\nu_i(\alpha_j),\mu_i)\leq 1.
\end{equation} 
Using \eqref{e:SMALLW} we choose $\tau\in(0,t_0]$ so small that \eqref{e:TAU0B}, \eqref{e:TAUCONTR} and
\begin{equation}\label{e:TAUW0}
6\max_{j\in\cali}\sup_{t\in(0,\tau]}t^{\dif(\gamma,\alpha_j)}\norm{S(t)w_0}_{E^{\alpha_j}}\leq\mu
\end{equation}
hold. Finally, let $r>0$ be so small that 
\begin{equation}\label{e:DEFR}
6\max_{j\in\cali}M(\gamma,\alpha_j,t_0)r\leq\mu.
\end{equation}
Recalling \eqref{e:MAP}, we have $H(\cdot,u_0)\in K(\tau,\mu)$  for $u_0\in K(\tau,\mu)$, since by Lemma~\ref{lem:REGH} we get for $t\in(0,\tau]$
and $j\in\cali$
\begin{equation*}
\begin{split}
t^{\dif(\gamma,\alpha_j)}\norm{H(u,u_0)(t)}_{E^{\alpha_j}}&\leq M(\gamma,\alpha_j,t_0)\norm{u_0-w_0}_{E^\gamma}+t^{\dif(\gamma,\alpha_j)}\norm{S(t)w_0}_{E^{\alpha_j}}\\
&+c_0\sum_{i\in\cali}M(\beta_i,\alpha_j,\tau)\Bigl[\tfrac{\tau^{\nu_i(\alpha_j)+\dif(\gamma,\alpha_j)}}{\nu_i(\alpha_j)}+\mu^{\rho_i}\tau^{\omega_i}B(\nu_i(\alpha_j),\mu_i)\Bigr]
\end{split}
\end{equation*}
and  \eqref{e:TAU0B}, \eqref{e:NEWMU}, \eqref{e:TAUW0}, and \eqref{e:DEFR} yield the desired estimate. Moreover, due to \eqref{e:TAUCONTR},  $H(\cdot,u_0)$ is a $\kappa$-contraction on $K(\tau,\mu)$ with $\kappa<1$ given in \eqref{e:KAPPA}. For the fixed point $u$ of $H(\cdot,u_0)$ by \eqref{e:OSZHINL} we have for $0<t\leq\tau$
\begin{equation*}
\begin{split}
\|u\|_{K(t)}\leq\|S(\cdot)u_0\|_{K(t)}&+c_0\sum_{i\in\cali}\max_{j\in\cali}t^{\nu_i(\alpha_j)+\dif(\gamma,\alpha_j)}\tfrac{M(\beta_i,\alpha_j,t_0)}{\nu_i(\alpha_j)}\\
&+c_0\sum_{i\in\cali}\max_{j\in\cali}M(\beta_i,\alpha_j,t_0)\mu^{\rho_i-1}t_0^{\omega_i}B(\nu_i(\alpha_j),\mu_i)\|u\|_{K(t)},
\end{split}
\end{equation*}
which by \eqref{e:NEWMU} gives
$$\tfrac{2}{3}\|u\|_{K(t)}\leq\|S(\cdot)u_0\|_{K(t)}+c_0\sum_{i\in\cali}\max_{j\in\cali}t^{\nu_i(\alpha_j)+\dif(\gamma,\alpha_j)}\tfrac{M(\beta_i,\alpha_j,t_0)}{\nu_i(\alpha_j)}.$$
Thus we have \eqref{e:CONVUTOZERO}. 

If $\{S(t)\}_{t>0}$ is a semigroup on $E^{\gamma}$ and on each $E^{\beta_i}$ for $i\in\cali$, Step 6 of the proof of Theorem~\ref{thm:EXIST} carries over in the present situation. Moreover, if $u_1, u_2$ are two $\gamma$-solutions of \eqref{e:VCF} on $(0,\tau]$ corresponding to $u_0\in E^\gamma$, which satisfies $\|u_0-w_0\|_{E^\gamma}\leq r$ and \eqref{e:CONVSNEW}, then $\|u_k\|_{K(t)}$ tends to zero as $t\to 0^{+}$ for $k=1,2$ due to \eqref{e:CONVUTOZERO}. Taking $\bar\tau\in(0,\tau]$ small enough, we see that $u_k\in K(\bar\tau,\mu)$, $k=1,2$, are fixed points of $H(\cdot,u_0)$ in $K(\bar\tau,\mu)$. However, the above argument implies that they need to coincide. This shows the local uniqueness, whereas their uniqueness on $(0,\tau]$ follows from Theorem~\ref{thm:UNIQUENESS}.
\end{proof}

In Theorem~\ref{thm:EXIST}, the existence time $\tau$ was found as a common time of existence of $\gamma$-solutions for $u_0$ taken from a given ball in $E^\gamma$. In the corollary below, we find a time $\tau(u_0)>0$ of existence for an individual $\gamma$-solution of \eqref{e:VCF} with $u_0\in E^\gamma$. 

\begin{cor}\label{cor:TAUU0}
Under the assumptions of Theorem~\ref{thm:EXIST}, if $u_0\in E^\gamma$ then in Theorem~\ref{thm:EXIST} we can take
$\tau=\tau(u_0)\in(0,1]$ given by
\begin{equation}\label{e:time}
\tau(u_0):=\Bigl(\tfrac{1}{6\abs{\cali}\lambda(1+\norm{u_0}_{E^\gamma})^{\rho-1}}\Bigr)^{\tfrac{1}{\omega}},
\end{equation}
with $\omega:=\min\limits_{i\in\cali}\omega_i>0$, $\rho:=\max\limits_{i\in\cali}\rho_i\geq 1$ and 
\begin{equation*}
\begin{split}
\lambda:=\max_{i,j\in\cali}\Big\{&c_0M_{ij}(1)[(1+3\max_{k\in\cali}M(\gamma,\alpha_k,1))^{\rho_i}B_{ij}+1],\tfrac{c_0M_{ij}(1)}{\nu_i(\alpha_j)}, c_0M_{ij}(1)B_{ij},\tfrac{1}{6}\Big\},
\end{split}	
\end{equation*}
where
$$M_{ij}(s)=M(\beta_i,\alpha_j,s)\ \text{ and }\ B_{ij}=B(\nu_i(\alpha_j),\mu_i)\ \text{ for }i,j\in\cali,\ s>0$$ 
with $\mu_i=1-\rho_i\dif(\gamma,\alpha_i)$ and $\nu_i(\alpha_j)=1-\dif(\beta_i,\alpha_j)$.
\end{cor}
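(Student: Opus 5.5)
The plan is to rerun Steps 1--2 of the proof of Theorem~\ref{thm:EXIST} with the concrete choices $R=\norm{u_0}_{E^\gamma}$, $t_0=1$ and
$$\mu=(1+3\max_{k\in\cali}M(\gamma,\alpha_k,1))(1+\norm{u_0}_{E^\gamma}).$$
This $\mu$ clearly satisfies \eqref{e:MU}, and it also satisfies $\mu\geq1$. The task then reduces to checking that $\tau=\tau(u_0)$ defined in \eqref{e:time} satisfies $\tau\in(0,1]$ together with \eqref{e:TAU1}, \eqref{e:TAU0B} and \eqref{e:TAUCONTR}. After that, all conclusions of Theorem~\ref{thm:EXIST} follow verbatim.

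First, since $\lambda\geq\frac16$ and $|\cali|\geq1$, the base in \eqref{e:time} is at most $1$, so $\tau\leq1$. Because $M$ is nondecreasing in $T$, we may replace $M(\beta_i,\alpha_j,\tau)$ by $M_{ij}(1)$ throughout. Since $\tau\leq1$, every power $\tau^{a}$ with $a\geq\omega$ is bounded by $\tau^{\omega}$, and $\tau^{\omega}=\bigl(6|\cali|\lambda(1+\norm{u_0}_{E^\gamma})^{\rho-1}\bigr)^{-1}$. The exponents that appear are:
\begin{itemize}
\item $\omega_i\geq\omega$;
\item $1-\dif(\beta_i,\alpha_i)=\omega_i+(\rho_i-1)\dif(\gamma,\alpha_i)\geq\omega_i\geq\omega$;
\item $\nu_i(\alpha_j)+\dif(\gamma,\alpha_j)=1+\reg(\beta_i)-\reg(\gamma)\geq\omega_i\geq\omega$.
\end{itemize}
So each of the three smallness conditions only needs to be verified with $\tau^{\omega}$ in place of the relevant power.

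Next come the checks. For \eqref{e:TAUCONTR}, we need $3|\cali|c_0M_{ij}(1)B_{ij}\tau^\omega<1$. Using $\lambda\geq c_0M_{ij}(1)B_{ij}$, the left side is at most $\frac12$. For \eqref{e:TAU0B}, we have $3|\cali|\frac{c_0M_{ij}(1)}{\nu_i(\alpha_j)}\tau^\omega\leq\frac12\leq\mu$. The main obstacle is \eqref{e:TAU1}, where the factor $\mu^{\rho_i-1}$ must be absorbed. Write $\mu^{\rho_i-1}\leq(1+3\max_kM(\gamma,\alpha_k,1))^{\rho_i}(1+\norm{u_0}_{E^\gamma})^{\rho_i-1}$ and use $(1+\norm{u_0})^{\rho_i-1}\leq(1+\norm{u_0})^{\rho-1}$. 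The first entry in the definition of $\lambda$ then gives
$$3|\cali|c_0M_{ij}(1)\mu^{\rho_i-1}B_{ij}\tau^\omega\leq\tfrac{3|\cali|\lambda(1+\norm{u_0})^{\rho-1}}{6|\cali|\lambda(1+\norm{u_0})^{\rho-1}}=\tfrac12\leq1.$$
The only bookkeeping care needed is the factor $2\mu^{\rho_i-1}$ in $\kappa$ from \eqref{e:KAPPA}. Summing the two contributions bounded above shows $\kappa\leq\frac13(\frac12\cdot2+\frac12)<1$, since each summand over $i$ carries a factor $1/(3|\cali|)$ and the sum over $i\in\cali$ cancels $|\cali|$.

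With these inequalities established, Steps 2--6 of the proof of Theorem~\ref{thm:EXIST} apply unchanged with $\tau=\tau(u_0)$. This gives the $\gamma$-solution on $(0,\tau(u_0)]$ together with all the accompanying assertions.
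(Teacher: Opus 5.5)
Your proposal is correct and follows the paper's proof. Both rerun Steps 1--2 of Theorem~\ref{thm:EXIST} with $R=\norm{u_0}_{E^\gamma}$ and $t_0=1$, bound $M(\beta_i,\alpha_j,\tau)$ by $M_{ij}(1)$ and every relevant power of $\tau$ by $\tau^{\omega}$, and then check \eqref{e:TAU1}, \eqref{e:TAU0B} and \eqref{e:TAUCONTR} against the entries of $\lambda$. The differences are only cosmetic. You take $\mu=(1+3\max_{k}M(\gamma,\alpha_k,1))(1+\norm{u_0}_{E^\gamma})$ instead of $3mR+R+1$, you bound $\mu^{\rho_i-1}$ directly rather than bounding $\mu^{\rho_i}$ and dividing by $\mu$, and you state explicitly that $\tau(u_0)\leq 1$ follows from $\lambda\geq\frac16$.
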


\begin{proof}
Setting $R=\|u_0\|_{E^\gamma}$, $t_0=1$, $m=\max\limits_{j\in\cali}M(\gamma,\alpha_j,1)$ and $\mu=3mR+R+1$ in Step~1 of the proof of Theorem~\ref{thm:EXIST}, we need to verify \eqref{e:TAU1}, \eqref{e:TAU0B} and \eqref{e:TAUCONTR} with $\tau=\tau(u_0)$.
Recalling $\omega_i$, $i\in\cali$, from \eqref{e:POSITIVEOMEGA}, we have
\begin{equation}\label{e:TAU1BETTER}
\begin{split}
c_0\tau(u_0)^{\omega_i}M_{ij}(\tau(u_0))\mu^{\rho_i}B_{ij}&\leq c_0\tau(u_0)^{\omega}M_{ij}(1)[(1+3m)^{\rho_i}B_{ij}+1](1+R)^{\rho_i}\\
&\leq\tau(u_0)^{\omega}\lambda(1+R)^{\rho}=\tfrac{1+R}{6\abs{\cali}}\leq\tfrac{\mu}{6\abs{\cali}}\ \text{ for }i,j\in\cali.
\end{split}
\end{equation}
Consequently, we obtain \eqref{e:TAU1} with $\tau=\tau(u_0)$.

By \eqref{e:POSITIVEOMEGA} we have $\omega_i=\nu_i(\alpha_j)+\dif(\gamma,\alpha_j)-\rho_i\dif(\gamma,\alpha_i)$ for $i\in\cali$. Thus we get for $i,j\in\cali$ 
\begin{equation*}
\begin{split}
&c_0\tau(u_0)^{\nu_i(\alpha_j)+\dif(\gamma,\alpha_j)}\tfrac{M_{ij}(\tau(u_0))}{\nu_i(\alpha_j)}\leq c_0\tau(u_0)^{{\omega_i}}\tfrac{M_{ij}(1)}{\nu_i(\alpha_j)}\leq\tau(u_0)^{\omega}\lambda(1+R)^{\rho}=\tfrac{1+R}{6\abs{\cali}}\leq\tfrac{\mu}{6\abs{\cali}},
\end{split}
\end{equation*}
which implies \eqref{e:TAU0B} with $\tau=\tau(u_0)$.

Since $\omega_i\leq 1-\dif(\beta_i,\alpha_i)$ for $i\in\cali$, we have 
\begin{equation}\label{e:TAUCONTRBETTER}
6\abs{\cali}c_0M_{ij}(\tau(u_0))\tau(u_0)^{1-\dif(\beta_i,\alpha_i)}B_{ij}
\leq6\abs{\cali}\lambda \tau(u_0)^{\omega}\leq 1\ \text{ for }i,j\in\cali, 
\end{equation}
which yields \eqref{e:TAUCONTR} with $\tau=\tau(u_0)$. Having verified \eqref{e:TAU1}, \eqref{e:TAU0B} and \eqref{e:TAUCONTR} with $\tau=\tau(u_0)$, we conclude that
$H(\cdot,u_0)\colon K(\tau(u_0),\mu)\to K(\tau(u_0),\mu)$ in \eqref{e:MAP} is a well-defined contraction and by \eqref{e:TAU1BETTER} and \eqref{e:TAUCONTRBETTER} we have
\begin{equation*}
\norm{H(u,u_0)-H(v,u_0)}_{K(\tau(u_0))}\leq \frac{1}{2}\norm{u-v}_{K(\tau(u_0))},\ u,v\in  K(\tau(u_0),\mu).
\end{equation*}	
The proof is complete.
\end{proof}

We will now prove Corollary~\ref{cor:EXIST} announced in the Introduction. It states that each $\gamma$-solution of \eqref{e:VCF} with $u_0\in E^\gamma$, constructed via Theorem~\ref{thm:EXIST} and Corollary~\ref{cor:TAUU0}, which belongs to $C((0,\tau(u_0)];E^\gamma)$, has a unique extension to a~$\gamma$-solution $u(\cdot,u_0)$ of \eqref{e:VCF}  defined on the interval $(0,\tau_{u_0})$ with maximal time of existence $\tau_{u_0}>\tau(u_0)$.

\begin{proof}[Proof of Corollary~\ref{cor:EXIST}]
By Theorems~\ref{thm:EXIST},~\ref{thm:UNIQUENESS} and Corollary~\ref{cor:TAUU0} we know that a $\gamma$-solution $u$ of \eqref{e:VCF} with $u_0\in E^\gamma$ exists on $(0,\tau(u_0)]$ and is unique. Suppose that we extend $u$ to a~$\gamma$-solution of \eqref{e:VCF} on some interval $(0,T]$. Denoting the extension again by $u$, due to \eqref{e:TOGAMMA} we have $u\in C((0,T];E^\gamma)$.
We choose $0<\epsilon<T$ and let $R>0$ be such that
$$\sup_{t\in[\epsilon,T]}\|u(t)\|_{E^\gamma}\leq R.$$
By Theorem~\ref{thm:EXIST} we find a common time of existence $\tau>0$ for all $u(s)$, $s\in[\epsilon,T]$, and construct $\gamma$-solutions $v_s$ on $(0,\tau]$ of \eqref{e:VCF} with $u(s)\in E^\gamma$ in the role of $u_0$, that is,
$$v_s(t)=S(t)u(s)+\sum_{i\in\cali}\int_0^t S(t-r)\calf_i(v_{s}(r))dr,\ t\in(0,\tau],$$
and
$$v_s\in \bigcap_{i\in\cali}C((0,\tau];E^\gamma\cap E^{\alpha_i})\cap\mathcal{L}^\infty_{\dif(\gamma,\alpha_i)}((0,\tau];E^{\alpha_i}).$$ 
We take $\theta\in[\epsilon,T)$ such that  $0<T-\theta<\tau$. We consider the shift $u_{\theta}=u(\cdot+\theta)$ which by Lemma~\ref{lem:SHIFT} is a $\gamma$-solution on $[0,T-\theta]$ of \eqref{e:VCF} with $u(\theta)$ in the role of $u_0$ and 
$$u_{\theta}\in C([0,T-\theta];E^\gamma\cap\bigcap_{i\in\cali}E^{\alpha_i}).$$
By Theorem~\ref{thm:UNIQUENESS} we know that $u_{\theta}=v_{\theta}$ on the interval $(0,T-\theta]$. Thus $v_{\theta}$ extends continuously in $E^\gamma\cap\bigcap\limits_{i\in\cali}E^{\alpha_i}$ at time zero by setting $v_{\theta}(0)=u(\theta)$. We have
$$\sup_{t\in[0,\tau]}\|v_{\theta}(t)\|_{E^{\gamma}}+\max_{i\in\cali}\sup_{t\in[0,\tau]}\|v_{\theta}(t)\|_{E^{\alpha_i}}\leq M$$
for some $M>0$. 
We consider the function $z\colon(0,\tau+\theta]\to E^\gamma\cap\bigcap\limits_{i\in\cali}E^{\alpha_i}$ given by 
$$z(t)=u(t)\ \text{ for }t\in(0,\theta]\quad \text{ and }\quad z(t)=v_{\theta}(t-\theta)\ \text{ for }t\in(\theta,\tau+\theta].$$
Note that $z$ belongs to $K(\tau+\theta)$, 
since for $i\in\cali$ we have
\begin{equation*}
\begin{split}
\sup_{t\in(0,\tau+\theta]}t^{\dif(\gamma,\alpha_i)}\|z(t)\|_{E^{\alpha_i}}&\leq\sup_{t\in(0,\theta]}t^{\dif(\gamma,\alpha_i)}\|u(t)\|_{E^{\alpha_i}}+\sup_{t\in[\theta,\tau+\theta]}t^{\dif(\gamma,\alpha_i)}\|v_{\theta}(t-\theta)\|_{E^{\alpha_i}}\\
&\leq\sup_{t\in(0,T]}t^{\dif(\gamma,\alpha_i)}\|u(t)\|_{E^{\alpha_i}}+\sup_{t\in[0,\tau]}(t+\theta)^{\dif(\gamma,\alpha_i)}\|v_{\theta}(t)\|_{E^{\alpha_i}}\\
&\leq\norm{u}_{\alpha_i,T}+(\tau+T)^{\dif(\gamma,\alpha_i)}M<\infty.
\end{split}
\end{equation*}
Moreover, we know that $z\in C((0,\tau+\theta];E^\gamma\cap\bigcap\limits_{i\in\cali}E^{\alpha_i})$ and $z$ satisfies \eqref{e:VCF} with $u_0$ on $(0,\tau+\theta]$, since for $t\in[\theta,\tau+\theta]$ we have 
$$z(t)=S(t-\theta)u(\theta)+\sum_{i\in\cali}\int_{\theta}^{t}S(t-s)\calf_i(z(s))ds=S(t)u_0+\sum_{i\in\cali}\int_{0}^{t}S(t-s)\calf_i(z(s))ds.$$
Hence $z$ is a $\gamma$-solution of \eqref{e:VCF} on $(0,\tau+\theta]$, which extends $u$ beyond $(0,T]$. 

We define the maximal time of existence $\tau_{u_0}\in(0,\infty]$ by
$$\tau_{u_0}:=\sup\{T>0\colon z\colon(0,T]\to E^\gamma\cap\bigcap_{i\in\cali}E^{\alpha_i}\ \text{is a $\gamma$-solution of }\eqref{e:VCF} \text{ which extends }\ u\}.$$ 
Since Theorem~\ref{thm:UNIQUENESS} guarantees that each extension of $u$ is unique, we can define the function
$u(\cdot,u_0)\colon(0,\tau_{u_0})\to E^\gamma\cap\bigcap\limits_{i\in\cali}E^{\alpha_i}$ 
by 
$$u(t,u_0)=z(t),\ t\in(0,T]\text{ for any }T<\tau_{u_0},$$
where $z$ is a $\gamma$-solution of \eqref{e:VCF} on $(0,T]$. 
Note that $u(\cdot,u_0)$ is a $\gamma$-solution of \eqref{e:VCF} with $u_0\in E^\gamma$ in the sense of Definition~\ref{defn:GAMMASOL}
and has regularity as specified in \eqref{e:HOWREGULAR}.

Suppose now that $\tau_{u_0}<\infty$.
By Lemma~\ref{lem:SHIFT} and Theorem~\ref{thm:UNIQUENESS} for any $0<t<\tau_{u_0}$ we see by uniqueness of $\gamma$-solutions that
$u(\cdot+t,u_0)=u(\cdot,u(t,u_0))$ is a $\gamma$-solution with the maximal time of existence $\tau_{u(t,u_0)}=\tau_{u_0}-t$. From Corollary~\ref{cor:TAUU0} we know that $\tau_{u(t,u_0)}>\tau(u(t,u_0))$. Moreover, by \eqref{e:time} we have
$$(1+\norm{u(t,u_0)}_{E^\gamma})^{\rho-1}>\tfrac{1}{6\abs{\cali}\lambda(\tau_{u_0}-t)^\omega}$$
and thus, given $\eps>0$, we get
\begin{equation}\label{e:timelife}
\|u(t,u_0)\|_{E^{\gamma}}>\tfrac{1}{\bigl(6\abs{\cali}\lambda(\tau_{u_0}-t)^{\omega}\bigr)^\frac{1}{\max\{\rho-1,\eps\}}}-1\ \text{ for all }\ 0<t<\tau_{u_0},
\end{equation}
which implies \eqref{e:LIMSUP}.
\end{proof}

The result below shows that if a $\gamma$-solution extended to the maximal time of existence $\tau_{u_0}$ is suitably integrable then it must be global.

\begin{cor}
Assume that $u(\cdot,u_0)$ is a $\gamma$-solution of \eqref{e:VCF} on $(0,\tau_{u_0})$ with the maximal time of existence $\tau_{u_0}$. Let $p\in[1,\infty]$ be such that $p\geq\frac{\rho-1}{\omega}$,
where $\rho=\max\limits_{i\in\cali}\rho_i\geq 1$ and $\omega=\min\limits_{i\in\cali}\omega_i>0$.
If for any finite time interval $[\tau_1,\tau_2)\subset(0,\tau_{u_0})$ we have
\begin{equation}\label{e:INTEGRABILITY}
u(\cdot,u_0)\in L^p ((\tau_1,\tau_2);E^{\gamma}),
\end{equation}  
then $t_{u_0}=\infty$, that is, $u(t,u_0)$ is defined for all $t>0$.
\end{cor}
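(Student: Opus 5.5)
The plan is to argue by contradiction. Suppose $\tau_{u_0}<\infty$. By the blow-up estimate \eqref{e:timelife} established in the proof of Corollary~\ref{cor:EXIST}, the norm $\|u(t,u_0)\|_{E^\gamma}$ is bounded below near $\tau_{u_0}$ by an explicit power of $(\tau_{u_0}-t)^{-1}$. I will then show that this lower bound cannot be $p$-integrable up to $\tau_{u_0}$ when $p\geq\frac{\rho-1}{\omega}$. This contradicts \eqref{e:INTEGRABILITY} applied to the finite interval $[\tau_1,\tau_{u_0})$ for any fixed $\tau_1\in(0,\tau_{u_0})$.

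First I need a sharp form of the lower bound. The case $\rho=1$ is immediate and needs no integrability: then the definition \eqref{e:time} of $\tau(u_0)$ does not depend on $u_0$. Since $\tau_{u(t,u_0)}=\tau_{u_0}-t>\tau(u(t,u_0))$ for every $t<\tau_{u_0}$, letting $t\to\tau_{u_0}^-$ gives $0\geq\tau(\cdot)>0$, a contradiction. So assume $\rho>1$. Then \eqref{e:timelife} holds with $\max\{\rho-1,\eps\}=\rho-1$, and we get
\[
1+\|u(t,u_0)\|_{E^\gamma}>c\,(\tau_{u_0}-t)^{-\frac{\omega}{\rho-1}},\quad 0<t<\tau_{u_0},
\]
with $c=(6|\cali|\lambda)^{-1/(\rho-1)}>0$.

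Now take $p<\infty$. Raising to the power $p$ and integrating over $(\tau_1,\tau_{u_0})$ gives
\[
\int_{\tau_1}^{\tau_{u_0}}(1+\|u(t,u_0)\|_{E^\gamma})^p\,dt\geq c^p\int_{\tau_1}^{\tau_{u_0}}(\tau_{u_0}-t)^{-\frac{p\omega}{\rho-1}}dt=\infty,
\]
because $\frac{p\omega}{\rho-1}\geq 1$ by hypothesis. The left-hand side is finite, since the interval is bounded and \eqref{e:INTEGRABILITY} holds on $[\tau_1,\tau_{u_0})$, which is a finite interval in $(0,\tau_{u_0})$. Here I use $(1+a)^p\leq 2^{p-1}(1+a^p)$. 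This is the contradiction.

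For $p=\infty$ the hypothesis $p\geq\frac{\rho-1}{\omega}$ holds trivially. The solution would then be bounded in $E^\gamma$ on $[\tau_1,\tau_{u_0})$, which contradicts \eqref{e:LIMSUP} directly.

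The main subtlety is the interval in \eqref{e:INTEGRABILITY}. The hypothesis has to be read as allowing $\tau_2=\tau_{u_0}$ when $\tau_{u_0}<\infty$, so that the interval is finite and half-open inside $(0,\tau_{u_0})$. If only compactly contained intervals were meant, the statement would be vacuous, because continuity already gives local boundedness there. I will state this interpretation explicitly. Measurability of $t\mapsto\|u(t,u_0)\|_{E^\gamma}$ follows from $u\in C((0,\tau_{u_0});E^\gamma)$, by \eqref{e:HOWREGULAR}. Finally, the conclusion in the statement, written $t_{u_0}=\infty$, should read $\tau_{u_0}=\infty$.
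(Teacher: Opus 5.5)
Your proposal is correct and follows the paper's argument. Assuming $\tau_{u_0}<\infty$, you use \eqref{e:timelife} to show that $\|u(t,u_0)\|_{E^\gamma}^p$ is not integrable up to $\tau_{u_0}$ because $p\omega\geq\rho-1$, which contradicts \eqref{e:INTEGRABILITY} on $[\tau_1,\tau_{u_0})$. The only difference is cosmetic: the paper takes $\eps=\omega$ in \eqref{e:timelife}, so that $p\omega\geq\max\{\rho-1,\eps\}$ covers $\rho=1$ and $\rho>1$ at once, whereas you treat $\rho=1$ separately using that $\tau(u_0)$ does not then depend on $u_0$.
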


\begin{proof}
Suppose contrary to the claim that $\tau_{u_0}<\infty$. Take $\eps=\omega>0$ and note that then $p\omega\geq\max\{\rho-1,\eps\}$ and by \eqref{e:timelife} for all $t$ close enough to $\tau_{u_0}$ we have
$$\|u(t,u_0)\|_{E^{\gamma}}>c(\tau_{u_0}-t)^{-\frac{\omega}{\max\{\rho-1,\eps\}}}$$
with some $c>0$. Then with $\tau$ close enough to $\tau_{u_0}$  we have $\norm{u}_{L^p((\tau,\tau_{u_0});E^\gamma)}=\infty$,
which contradicts \eqref{e:INTEGRABILITY}.
\end{proof}

An important case of our setting is when $\J_0\subset\J$ are intervals and the regularity index $\reg\colon\J\to\R$ associated with the family $\{E^\sigma\}_{\sigma\in\J}$ is a continuous increasing function. Using $\reg^{-1}$ we can reparametrize the family so that its regularity index becomes an identity on an interval. Therefore, its worth 
rephrasing Corollary~\ref{cor:EXIST} with additional properties from Proposition~\ref{prop:PROPERTIES} in this situation.

\begin{cor}\label{cor:INTERVAL}
Assume that $\J_0\subset\J$ are intervals and $\{E^\sigma\}_{\sigma\in\J}$ is a family of Banach spaces with regularity index being an identity. Assume \ref{a:B1}, \ref{a:B2} and let $\calf_{i}\colon E^{\alpha_i}\to E^{\beta_i}$ satisfy \eqref{e:EPSREG}, \eqref{e:EPSREG2}
with $c_0>0$, $\rho_i\geq 1$, $\beta_i\in\J_0$, $\alpha_i\in\J$ for $i\in\cali$ such that \eqref{e:SETUP1} and
\begin{equation}\label{e:WAR2}
0\leq\alpha_j-\beta_i<1,\ i,j\in\cali.
\end{equation}
Let $\gamma\in\J_0$ satisfy \eqref{e:SETUP2} and \eqref{e:TOGAMMA} with
\begin{equation}\label{e:WAR1EQUIV}
\omega_i=1+\beta_i-\alpha_i\rho_i+(\rho_i-1)\gamma>0\ \text{ for }i\in\cali,
\end{equation}
and
\begin{equation}\label{e:WAR4}
\max_{i\in\cali}\beta_i\leq\gamma\leq\min_{i\in\cali}\alpha_i.
\end{equation}
Assume that $\{S(t)\}_{t>0}$ is a~semigroup on $E^{\gamma}$ and on each $E^{\beta_i}$ for $i\in\cali$. Then for any $u_0\in E^\gamma$ there exists a~$\gamma$-solution $u(\cdot,u_0)$ of \eqref{e:VCF} defined on $(0,\tau_{u_0})$ with the maximal time of existence $\tau_{u_0}$, having regularity as in \eqref{e:HOWREGULAR} and satisfying \eqref{e:LIMSUP} provided that $\tau_{u_0}<\infty$. 
Moreover, it extends continuously in $E^\gamma$ at $t=0$ with $u(0)=u_0$ if \eqref{e:STC0} holds.
Furthermore, if $\delta\in\J$ is such that $\gamma\leadsto\delta$, $\beta_i\leadsto\delta$ for all $i\in\cali$ 
and
$$\gamma\leq\delta<\min_{i\in\cali}\beta_i+1,$$
then
\begin{itemize}
\item[(i)] $u\in C((0,\tau_{u_0});E^\delta)$,
\item[(ii)] $t^{\delta-\gamma}\norm{u(t)}_{E^{\delta}}\to 0$  as $t\to 0^{+}$  provided that 
\begin{equation}\label{e:CONVGAMMATHETA1}
t^{\delta-\gamma}\norm{S(t)u_0}_{E^{\delta}}\to 0\ \text{ as }t\to 0^{+},
\end{equation}
\item[(iii)] $u(t)\to u_0$ in $E^{\delta}$ as $t\to 0^{+}$
if $\delta<\gamma+\min\limits_{i\in\cali}\omega_i$, $u_0\in E^{\delta}$ and $S(t)u_0\to u_0$ in $E^{\delta}$ as $t\to 0^{+}$.
\end{itemize}
\end{cor}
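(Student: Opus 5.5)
The plan is to reduce Corollary~\ref{cor:INTERVAL} to Corollary~\ref{cor:EXIST} and Proposition~\ref{prop:PROPERTIES} by translating every hypothesis into the interval setting, where $\reg$ is the identity. In that setting $\dif(\sigma,\xi)=\xi-\sigma$ whenever $\xi\geq\sigma$. The hypotheses of Corollary~\ref{cor:EXIST} are \eqref{e:SETUP1}, \eqref{e:SETUP2}, \eqref{e:TOGAMMA}, the semigroup property, \eqref{e:CONDALPHAJ} and \eqref{e:POSITIVEOMEGA}. Of these, \eqref{e:SETUP1}, \eqref{e:SETUP2}, \eqref{e:TOGAMMA} and the semigroup property are assumed verbatim. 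Condition \eqref{e:CONDALPHAJ} reads $\alpha_j-\beta_i<1$, which is \eqref{e:WAR2}. For \eqref{e:POSITIVEOMEGA}, a short computation gives
\[
1-\rho_i(\alpha_i-\gamma)+\beta_i-\gamma=1+\beta_i-\alpha_i\rho_i+(\rho_i-1)\gamma,
\]
so it is exactly \eqref{e:WAR1EQUIV}. Condition \eqref{e:WAR4} guarantees that all the differences $\alpha_i-\gamma$ and $\gamma-\beta_i$ are nonnegative, so the $\dif$ expressions are meaningful. (These are also implied by the $\leadsto$ relations.)

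Once these translations are checked, Corollary~\ref{cor:EXIST} directly gives the following: the maximal $\gamma$-solution on $(0,\tau_{u_0})$, the regularity \eqref{e:HOWREGULAR}, the blow-up \eqref{e:LIMSUP} when $\tau_{u_0}<\infty$, and continuity at $t=0$ in $E^\gamma$ under \eqref{e:STC0}. I would also note that $\mu_i=1-\rho_i(\alpha_i-\gamma)>0$. This follows from $\omega_i>0$ together with $\gamma\geq\beta_i$, since $\mu_i=\omega_i+\gamma-\beta_i$. So \eqref{e:POSITIVEMUI}, which Proposition~\ref{prop:PROPERTIES} needs, holds.

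For the statements about $\delta$, I would verify \eqref{e:THETA} and \eqref{e:TODELTA1}. \eqref{e:TODELTA1} is $\gamma\leadsto\delta$, which is assumed. \eqref{e:THETA} requires $\beta_i\leadsto\delta$, which is assumed, and $\nu_i(\delta)=1-(\delta-\beta_i)>0$, which is $\delta<\beta_i+1$ and follows from $\delta<\min_i\beta_i+1$. The condition $\gamma\leq\delta$ makes $\dif(\gamma,\delta)=\delta-\gamma$ meaningful. The three parts then follow as below.

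\emph{Part (i).} Apply Proposition~\ref{prop:PROPERTIES}(iii) on each $\calt_T=(0,T]$ with $T<\tau_{u_0}$. The restriction of $u(\cdot,u_0)$ is a $\gamma$-solution there by \eqref{e:HOWREGULAR}, and continuity on $(0,\tau_{u_0})$ follows.

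\emph{Part (ii).} This is Proposition~\ref{prop:PROPERTIES}(i), using $\omega_i>0$ and \eqref{e:CONVGAMMATHETA1} in the role of \eqref{e:CONVS}.

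\emph{Part (iii).} This is Proposition~\ref{prop:PROPERTIES}(ii), with $\reg(\delta)-\reg(\gamma)=\delta-\gamma<\min_i\omega_i$.

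There is no genuine obstacle here; the proof is essentially bookkeeping. The only points needing care are the algebraic identity for $\omega_i$, and the derivation of $\mu_i>0$ from $\omega_i>0$ and $\beta_i\leq\gamma$, because Corollary~\ref{cor:EXIST} does not list \eqref{e:POSITIVEMUI} explicitly but Proposition~\ref{prop:PROPERTIES} requires it.
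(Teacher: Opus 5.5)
Your proposal is correct and takes the paper's route. The paper gives no separate proof: it presents Corollary~\ref{cor:INTERVAL} as Corollary~\ref{cor:EXIST} rewritten with $\reg$ the identity, plus Proposition~\ref{prop:PROPERTIES}(i)--(iii) for the $\delta$-statements, which is your translation. Your explicit check that $\mu_i=\omega_i+\gamma-\beta_i>0$ fills in something the paper leaves tacit: Proposition~\ref{prop:PROPERTIES} and the underlying Theorem~\ref{thm:EXIST} both need it.
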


Note that by Proposition~\ref{prop:CONV2} condition \eqref{e:CONVGAMMATHETA1} holds if $\delta\in(\gamma,\min\limits_{i\in\cali}\beta_i+1)$ and there exists $\sigma\in(\gamma,\delta]\cap\J_0$ such that $\sigma\leadsto\delta$ and $E^\sigma\cap E^{\gamma}$ is dense in $E^\gamma$.

Conditions \eqref{e:WAR2}--\eqref{e:WAR4} of Corollary~\ref{cor:INTERVAL} are consistent with the restrictions in the subcritical case considered in \cite{Q15,CHQRB17}  where the nonlinearity was just a~\emph{single} map, i.e., $|\cali|=1$, and the family $\{S(t)\}_{t>0}$ was a smoothing semigroup on the whole Banach scale $\{E^\gamma\}_{\gamma\in\J}$. A~range of admissible spaces from which one can take the initial condition $u_0$ in order to solve \eqref{e:VCF} was derived in \cite[(2.20)]{CHQRB17}, whereas the continuity of solutions in $E^\gamma$ was discussed in \cite[Proposition 2.10]{CHQRB17}. 

\section{Regularization of $\gamma$-solutions}\label{sec:REGULARIZATION}

Under the assumptions of Corollary~\ref{cor:EXIST}, 
the $\gamma$-solution constructed there satisfies
\begin{equation}\label{e:NEWFORMULA}
u(t)=S\left(\tfrac{t}{2}\right)u\left(\tfrac{t}{2}\right)+\sum_{i\in\cali}\int_{\frac{t}{2}}^{t}S(t-s)\calf_i(u(s))ds,\ t\in(0,\tau_{u_0}).
\end{equation}
Indeed, since the family is a semigroup on $E^\gamma$ and each $E^{\beta_i}$ and $S(\frac{t}{2})\in\mathcal{L}(E^{\gamma};E^{\gamma}\cap\bigcap\limits_{i\in\cali}E^{\alpha_i})$, we have for $t\in(0,\tau_{u_0})$
\begin{equation*}
u(t)=S\left(\tfrac{t}{2}\right)S\left(\tfrac{t}{2}\right)u_0+\sum_{i\in\cali}S\left(\tfrac{t}{2}\right)\int_{0}^{\frac{t}{2}}S\left(\tfrac{t}{2}-s\right)\calf_i(u(s))ds+\sum_{i\in\cali}\int_{\frac{t}{2}}^{t}S(t-s)\calf_i(u(s))ds.
\end{equation*}
Using the linearity of $S(\frac{t}{2})$ on $E^{\gamma}$, we get \eqref{e:NEWFORMULA}.

Having \eqref{e:NEWFORMULA}, in certain situations we can improve regularity of such solution under relatively mild assumptions in comparison to Lemmas~\ref{lem:REGH} and \ref{lem:REGH2}. 

\begin{lem}\label{lem:BETTERREGULARITY}
Let $\calf_i$, $i\in\cali$, satisfy \eqref{e:EPSREG2}, i.e.,
\begin{equation*}
\norm{\calf_i(\phi)}_{E^{\beta_i}}\leq c_0(\norm{\phi}_{E^{\alpha_i}}^{\rho_i}+1),\ \phi\in E^{\alpha_i},
\end{equation*}
with $c_0>0$, $\rho_i\geq 1$, $\alpha_i\in\J$, $\beta_i\in\J_0$ for $i\in\cali$ and let $\gamma\in\J_0$ be such that
\begin{equation}\label{e:WEAKERSETUP2}
\reg(\gamma)\leq\reg(\alpha_i)\ \text{ for all }i\in\cali.
\end{equation}
Assume that
\begin{equation*}
u\in \bigcap_{T\in\calt_\tau}\bigcap_{i\in\cali}\mathcal{L}^\infty_{\dif(\gamma,\alpha_i)}((0,T];E^{\alpha_i})
\end{equation*}
satisfies 
\begin{equation}\label{e:NEWFORMULAGENERAL}
u(t)=S\left(\tfrac{t}{2}\right)u\left(\tfrac{t}{2}\right)+\sum_{i\in\cali}\int_{\frac{t}{2}}^{t}S(t-s)\calf_i(u(s))ds,\ t\in\calt_\tau.
\end{equation}
Let $\delta\in\J$ be such that \eqref{e:THETA} holds, i.e.,
\begin{equation*}
\beta_i\leadsto\delta\ \text{ and }\ \dif(\beta_i,\delta)<1\text{ for all  }i\in\cali,
\end{equation*}
and 
\begin{equation}\label{e:ALPHAJDELTA}
\J_0\ni\alpha_j\leadsto\delta\text{ for some }j\in\cali.
\end{equation}
(i) If \eqref{e:NONNEGATIVEOMEGA} holds, i.e., 
$$\omega_i=1-\rho_i\dif(\gamma,\alpha_i)+\reg(\beta_i)-\reg(\gamma)\geq 0\text{ for all }i\in\cali,$$
then for any $T\in\calt_\tau$ we have $u\in\mathcal{L}^\infty_{\dif(\gamma,\delta)}((0,T];E^{\delta})$ and
\begin{equation}\label{e:BETTERREGULARITY}
\begin{split}
\norm{u}_{\delta,T}&=\sup_{t\in(0,T]}t^{\dif(\gamma,\delta)}\|u(t)\|_{E^{\delta}}\leq 2^{\dif(\gamma,\delta)} M(\alpha_j,\delta,T)\norm{u}_{\alpha_j,T}\\
&+c_0\sum_{i\in\cali}\tfrac{M(\beta_i,\delta,T)}{2^{1-\dif(\beta_i,\delta)}(1-\dif(\beta_i,\delta))}\left(T^{1-\dif(\beta_i,\delta)+\dif(\gamma,\delta)}+2^{\rho_i\dif(\gamma,\alpha_i)}\norm{u}_{\alpha_i,T}^{\rho_i}T^{\omega_i}\right).
\end{split}
\end{equation}
(ii) If $u\in C(\calt_\tau;E^{\alpha_j})$ then $u\in C(\calt_\tau;E^\delta)$.
\end{lem}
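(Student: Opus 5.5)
The plan is to work directly from \eqref{e:NEWFORMULAGENERAL} and estimate its two terms separately in $E^\delta$, always evaluating the semigroup at a time of order $t/2$. This keeps us away from the singularity at $s=0$: the Duhamel integral only sees $s\in[\frac t2,t]$, where $s^{-\rho_i\dif(\gamma,\alpha_i)}\leq 2^{\rho_i\dif(\gamma,\alpha_i)}t^{-\rho_i\dif(\gamma,\alpha_i)}$. For this reason no positivity of $\mu_i$ is needed, only $\nu_i(\delta)>0$.

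\textbf{Linear term.} Fix $T\in\calt_\tau$ and $t\in(0,T]$. Since $u(\frac t2)\in E^{\alpha_j}$ and $\alpha_j\in\J_0$ with $\alpha_j\leadsto\delta$, the bound \eqref{e:LINEAREST} gives
\begin{equation*}
\norm{S(\tfrac t2)u(\tfrac t2)}_{E^\delta}\leq M(\alpha_j,\delta,T)(\tfrac t2)^{-\dif(\alpha_j,\delta)}(\tfrac t2)^{-\dif(\gamma,\alpha_j)}\norm{u}_{\alpha_j,T}.
\end{equation*}
Because $\dif(\gamma,\alpha_j)+\dif(\alpha_j,\delta)=\dif(\gamma,\delta)$, which is nonnegative by \eqref{e:WEAKERSETUP2}, multiplying by $t^{\dif(\gamma,\delta)}$ yields the first term of \eqref{e:BETTERREGULARITY}.

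\textbf{Integral term.} For each $i\in\cali$, apply \eqref{e:LINEAREST} for $\beta_i\leadsto\delta$ together with \eqref{e:EPSREG2}. This bounds the integral by
\begin{equation*}
c_0M(\beta_i,\delta,T)\int_{t/2}^t(t-s)^{-\dif(\beta_i,\delta)}\bigl(1+s^{-\rho_i\dif(\gamma,\alpha_i)}\norm{u}^{\rho_i}_{\alpha_i,T}\bigr)ds.
\end{equation*}
Using $\int_{t/2}^t(t-s)^{-\dif(\beta_i,\delta)}ds=\frac{(t/2)^{1-\dif(\beta_i,\delta)}}{1-\dif(\beta_i,\delta)}$, multiplying by $t^{\dif(\gamma,\delta)}$, and checking exponents, we get two contributions:
\begin{itemize}
\item the constant part gives $t^{1-\dif(\beta_i,\delta)+\dif(\gamma,\delta)}$;
\item the nonlinear part gives $t^{1-\dif(\beta_i,\delta)+\dif(\gamma,\delta)-\rho_i\dif(\gamma,\alpha_i)}=t^{\omega_i}$, since $\dif(\gamma,\delta)-\dif(\beta_i,\delta)=\reg(\beta_i)-\reg(\gamma)$.
\end{itemize}
Both exponents are nonnegative: the first because $\omega_i\geq0$ and $\rho_i\dif(\gamma,\alpha_i)\geq0$, the second by \eqref{e:NONNEGATIVEOMEGA}. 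Hence we may bound $t$ by $T$ in each. Summing over $i$ gives \eqref{e:BETTERREGULARITY}. A minor measurability/local boundedness check is needed for membership in $\mathcal{L}^\infty$, but the sup bound is the substance.

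\textbf{Part (ii).} Fix $t_0\in\calt_\tau$ and write, for $t$ near $t_0$, the decomposition \eqref{e:NEWFORMULAGENERAL} with the split point frozen at $\frac{t_0}{2}$ rather than $\frac t2$. The analogous identity $u(t)=S(t-\frac{t_0}{2})u(\frac{t_0}{2})+\sum_i\int_{t_0/2}^tS(t-s)\calf_i(u(s))ds$ for $t$ near $t_0$ is the point I expect to be the main obstacle, since only the $\frac t2$-formula is assumed.

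My first attempt is to apply \eqref{e:NEWFORMULAGENERAL} directly and show each piece is continuous in $t$:
\begin{itemize}
\item $t\mapsto S(\frac t2)u(\frac t2)$ is continuous into $E^\delta$ by the joint continuity in Definition~\ref{def:SMOOTHS}(ii) for $\alpha_j\leadsto\delta$, combined with $u\in C(\calt_\tau;E^{\alpha_j})$.
\item For the integral, substitute $s=t-r$ to get $\int_0^{t/2}S(r)\calf_i(u(t-r))dr$. Continuity of $u$ into $E^{\alpha_j}$ alone does not give continuity of $\calf_i(u)$ in $E^{\beta_i}$; that would need $u\in C(E^{\alpha_i})$, which I do not have.
\end{itemize}
So I would instead mimic the proof of Lemma~\ref{lem:REGH2}, restricted to the window $[\frac{t_0}{4},T]$. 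There $u$ is bounded in each $E^{\alpha_i}$, so $\calf_i(u(s))$ is bounded in $E^{\beta_i}$. Small-interval contributions are then controlled exactly as in \eqref{e:INTEGRALBASIC0}, and the differences $(S(t-s)-S(t_0-s))\calf_i(u(s))$ are handled by dominated convergence using Definition~\ref{def:SMOOTHS}(ii). This uses only pointwise continuity of $S$ on $E^{\beta_i}$, not of $u$, and avoids needing a semigroup property beyond what the frozen-split identity requires.
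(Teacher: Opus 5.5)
Your proposal is correct and follows the paper's proof. Part (i) is exactly the paper's estimate of the two terms of \eqref{e:NEWFORMULAGENERAL}: the paper applies \eqref{e:INTEGRALBASIC0} with $t_1=\frac t2$, $t_2=t_3=t$, which is your computation. In part (ii) the paper likewise works with the $\frac t2$-formula directly, handling $S(\frac t2)u(\frac t2)$ via $\alpha_j\leadsto\delta$ and the integral term by the scheme of Lemma~\ref{lem:REGH2}, so your final plan matches it. The frozen-split identity you flag is never needed, and it is in fact not available here because no semigroup property is assumed, so you should drop it rather than leave it as an unresolved step. The moving endpoint $\frac t2$ only produces one extra integral over an interval of length $\frac{|t-t_0|}{2}$, away from the singularity, and \eqref{e:INTEGRALBASIC0} controls it.
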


\begin{proof}
$(i)$ Note that $\reg(\delta)-\reg(\gamma)=\dif(\alpha_j,\delta)+\dif(\gamma,\alpha_j)\geq 0$. For any $T\in\calt_\tau$ 
we estimate \eqref{e:NEWFORMULAGENERAL} applying \eqref{e:INTEGRALBASIC0} and using \eqref{e:EPSREG2}, \eqref{e:THETA} and \eqref{e:ALPHAJDELTA} to obtain
\begin{equation*}
\begin{split}
t^{\dif(\gamma,\delta)}\|u(t)\|_{E^{\delta}}&\leq 2^{\dif(\gamma,\delta)}M(\alpha_j,\delta,T)\bigl(\tfrac{t}{2}\bigr)^{\dif(\gamma,\alpha_j)}\|u\bigl(\tfrac{t}{2}\bigr)\|_{E^{\alpha_j}}\\
&+c_0\sum_{i\in\cali}\tfrac{M(\beta_i,\delta,T)}{2^{1-\dif(\beta_i,\delta)}(1-\dif(\beta_i,\delta))}\Bigl(t^{1+\reg(\beta_i)-\reg(\gamma)}+2^{\rho_i\dif(\gamma,\alpha_i)}\norm{u}_{\alpha_i,T}^{\rho_i}t^{\omega_i}\Bigr).
\end{split}
\end{equation*}
Since $\reg(\gamma)-\reg(\beta_i)=\dif(\beta_i,\delta)-\dif(\gamma,\delta)<1$, by \eqref{e:NONNEGATIVEOMEGA} we get \eqref{e:BETTERREGULARITY}.

\noindent 
$(ii)$ Observe that for $0<t_0<T\in\calt_\tau$ and $t\in[\frac{t_0}{2},T]$ from \eqref{e:NEWFORMULAGENERAL} we get
\begin{equation}\label{e:INDELTA1}
\begin{split}
&\|u(t)-u(t_0)\|_{E^{\delta}}\leq\|S(\tfrac{t}{2})[u(\tfrac{t}{2})-u(\tfrac{t_0}{2})]\|_{E^{\delta}}+\|(S(\tfrac{t}{2})-S(\tfrac{t_0}{2}))u(\tfrac{t_0}{2})\|_{E^{\delta}}\\
&+\Bigl\|\sum_{i\in\cali}\int_{\frac{t}{2}}^{t}S(t-s)\calf_i(u(s))ds-\sum_{i\in\cali}\int_{\frac{t_0}{2}}^{t_0}S(t_0-s)\calf_i(u(s))ds\Bigr\|_{E^{\delta}}.
\end{split}
\end{equation}
Since $\alpha_j\leadsto\delta$ and $u\in C(\calt_\tau;E^{\alpha_j})$ for some $j\in\cali$, we have
$$\|(S(\tfrac{t}{2})-S(\tfrac{t_0}{2}))u(\tfrac{t_0}{2})\|_{E^{\delta}}\to 0\text{ as }t\to t_0,$$
$$\|S(\tfrac{t}{2})[u(\tfrac{t}{2})-u(\tfrac{t_0}{2})]\|_{E^{\delta}}\leq M(\alpha_j,\delta,T)(\tfrac{2}{t})^{\dif(\alpha_j,\delta)}\|u(\tfrac{t}{2})-u(\tfrac{t_0}{2})]\|_{E^{\alpha_j}}\to 0\text{ as }t\to t_0.$$
Thus, to prove the claim, it suffices to show that the last term in the right-hand side of \eqref{e:INDELTA1}, denoted $I_t$, tends to zero as $t\to t_0$.

We first show that $I_t$ converges to zero as $t\to t_0^{+}$. For $t_0<t<\min\{2t_0,T\}$, by \eqref{e:THETA} and \eqref{e:INTEGRALBASIC0} with $\nu_i(\delta)=1-\dif(\beta_i,\delta)\in(0,1]$ and $\mu_i=1-\rho_i\dif(\gamma,\alpha_i)$, we get
\begin{equation*}
\begin{split}
&I_t\leq 2c_0\sum_{i\in\cali}M(\beta_i,\delta,T)\tfrac{(t-t_0)^{\nu_i(\delta)}}{\nu_i(\delta)}
\left(1+\|u\|_{\alpha_i,T}^{\rho_i}\bigl(\tfrac{t_0}{2}\bigr)^{\mu_i-1}\right)+\int_{\frac{t_0}{2}}^{t_0}g_t(s)ds
\end{split}
\end{equation*}
with 
$$g_t(s):=\Bigl\|\sum_{i\in\cali}(S(t-s)-S(t_0-s))\calf_i(u(s))\Bigr\|_{E^{\delta}}\ \text{ for }\ s\in(\tfrac{t_0}{2},t_0).$$
Hence it remains to show that
\begin{equation}\label{e:INTGTGOESTOZERO}
\int_{\frac{t_0}{2}}^{t_0}g_t(s)ds\to 0\ \text{ as }\ t\to t_0^{+}.
\end{equation} 
Using \eqref{e:EPSREG2} and \eqref{e:THETA} with $0<\nu_i(\delta)\leq 1$, we obtain for $s\in(\frac{t_0}{2},t_0)$ 
\begin{equation*}
g_t(s)
\leq 2c_0\sum_{i\in\cali}M(\beta_i,\delta,\tau)(t_0-s)^{\nu_i(\delta)-1}\left(1+\norm{u}_{\alpha_i,T}^{\rho_i}\bigl(\tfrac{t_0}{2}\bigr)^{\mu_i-1}\right)=:g(s).
\end{equation*}
Hence $g_t$ for $t\in(t_0,\min\{2t_0,T\})$ is bounded by the integrable function $g$ on $(\tfrac{t_0}{2},t_0)$. 
Moreover, given $s\in(\frac{t_0}{2},t_0)$, we have
\begin{equation*}
0\leq g_t(s)\leq
\sum_{i\in\cali}\norm{(S(t-s)-S(t_0-s))\calf_i(u(s))}_{E^{\delta}}\to 0\ \text{ as }\ t\to t_0^+,
\end{equation*}
since $\calf_i(u(s))\in E^{\beta_i}$ and $\beta_i\leadsto\delta$.
Consequently, Lebesgue's dominated convergence theorem yields \eqref{e:INTGTGOESTOZERO}.
This shows that $\norm{u(t)-u(t_0)}_{E^{\delta}}\to 0$ as $t\to t_0^{+}$.

We will now prove that $I_t\to 0$ as $t\to t_0^{-}$. Given $\eps>0$, we choose $0<\eta\leq\frac{t_0}{2}$ so small that 
$$2c_0|\cali|\max_{i\in\cali}M(\beta_i,\delta,T)\tfrac{\eta^{\nu_i(\delta)}}{\nu_i(\delta)}	\Bigl(1+\|u\|_{\alpha_i,T}^{\rho_i}\bigl(\tfrac{t_0}{2}\bigr)^{\mu_i-1}\Bigr)\leq \eps.$$
Let  
$t\in[\frac{t_0}{2},T]$ be such that $0<t_0-t<\eta$. Then we have $t\in(\frac{t_0}{2},t_0)$ and by  \eqref{e:INTEGRALBASIC0} we get
\begin{equation*}
\begin{split}
I_t&\leq 2c_0\sum_{i\in\cali}M(\beta_i,\delta,T)\tfrac{(t_0-t)^{\nu_i(\delta)}}{\nu_i(\delta)}
\Bigl(1+\|u\|_{\alpha_i,T}^{\rho_i}\bigl(\tfrac{t}{2}\bigr)^{\mu_i-1}\Bigr)\\
&+2c_0\sum_{i\in\cali}M(\beta_i,\delta,T)\tfrac{\eta^{\nu_i(\delta)}}{\nu_i(\delta)}
\Bigl(1+\|u\|_{\alpha_i,T}^{\rho_i}\bigl(\tfrac{t_0}{2}\bigr)^{\mu_i-1}\Bigr)+\int_{\frac{t_0}{2}}^{t_0-\eta}h_{t}(s)ds,\\
\end{split}
\end{equation*}
where 
\begin{equation*}
\begin{split}
h_t(s)&:=\Bigl\|\sum_{i\in\cali}(S(t-s)-S(t_0-s))\calf_i(u(s))\Bigr\|_{E^{\delta}}\\ &\leq 2c_0\sum_{i\in\cali}M(\beta_i,\delta,T)(t_0-\eta-s)^{\nu_i(\delta)-1}\Bigl(1+\|u\|_{\alpha_i,T}^{\rho_i}\bigl(\tfrac{t_0}{2}\bigr)^{\mu_i-1}\Bigr)=:h(s).
\end{split}
\end{equation*}
 Note that $h_t$ is bounded by the integrable function $h$ on $(\frac{t_0}{2},t_0-\eta)$ and for  $s\in(\frac{t_0}{2},t_0-\eta)$
\begin{equation*}
\begin{split}	
&0\leq h_t(s)\leq\sum_{i\in\cali}\norm{(S(t-s)-S(t_0-s))\calf_i(u(s))}_{E^{\delta}}\to 0\ \text{ as }\ t\to t_0^-.
\end{split}
\end{equation*}
Hence Lebesgue's dominated convergence theorem implies
\begin{equation*}
\int_{\frac{t_0}{2}}^{t_0-\eta}h_t(s)ds\to 0\ \text{ as }t\to t_0^{-}.
\end{equation*}
This shows that $\norm{u(t)-u(t_0)}_{E^{\delta}}\to 0$  as $t\to t_0^{-}$,
which ends the proof.
\end{proof}

\begin{rem}\label{rem:ITERATION}
(i) If instead of \eqref{e:EPSREG2}, we have
\begin{equation*}
\norm{\calf_i(\phi)}_{E^{\beta_i}}\leq c_0\norm{\phi}_{E^{\alpha_i}}^{\rho_i},\ \phi\in E^{\alpha_i},\ i\in\cali,
\end{equation*}
then the estimate in \eqref{e:BETTERREGULARITY} simplifies to
\begin{equation*}
\|u\|_{\delta,T}\leq 2^{\dif(\gamma,\delta)} M(\alpha_j,\delta,T)\norm{u}_{\alpha_j,T}+c_0\sum_{i\in\cali}\tfrac{M(\beta_i,\delta,T)}{2^{1-\dif(\beta_i,\delta)-\rho_i\dif(\gamma,\alpha_i)}(1-\dif(\beta_i,\delta))}\norm{u}_{\alpha_i,T}^{\rho_i}T^{\omega_i}.
\end{equation*}
(ii) If we apply the above lemma with some $\delta=\alpha_j'\in\J_0$ for $j\in\cali$, we will be able to reiterate the procedure of Lemma~\ref{lem:BETTERREGULARITY} with $\alpha_i'$ in the role of $\alpha_i$ provided that suitable estimates \eqref{e:EPSREG2} of $\calf_i$ between $E^{\alpha_i'}$ and some $E^{\beta_i'}$ are available and the counterparts of  \eqref{e:WEAKERSETUP2} and \eqref{e:NONNEGATIVEOMEGA} hold.
\end{rem} 

\section{Extrapolated fractional power scales}\label{sec:FRACTIONAL}

As an example of the setting used in previous sections serves the theory of extrapolated fractional power scales, see e.g. \cite{AM,Las,Q15,CzD}. 

Let $A\colon X\supset\dom(A)\to X$ be a positive sectorial operator in a reflexive Banach space $(X,\norm{\cdot}_{X})$. We denote by
$\iota$ the dense and continuous embedding of $D(A^*)$ into $X^*$, where $D(A^*)$ is the domain $\dom(A^*)$ of the adjoint $A^*$ of $A$ endowed with the graph norm.  
If $j\in\mathcal{L}(X;[D(A^*)]^{*})$ is a composition of the canonical isometry between $X$ and $X^{**}$ with the adjoint map of $\iota$, then $j$ becomes an isometric embedding of the space $X$ endowed with the norm $\|A^{-1}\cdot\|_{X}$ into $[D(A^*)]^{*}$ with dense image $j(X)$. 
Hence $E^{-1}=[D(A^*)]^{*}$ with $\|\cdot\|_{E^{-1}}=\|\cdot\|_{[D(A^*)]^{*}}$ is a completion of the space $X$ endowed with the norm $\|A^{-1}\cdot\|_{X}$ via $j$. We call $E^{-1}$ the extrapolated Banach space corresponding to $A$ and set $E^0=j(X)$.

Let $A_{-1}\colon E^{-1}\supset E^0\to E^{-1}$ be the corresponding extrapolated operator, which is a~unique linear extension of $A$ such that $A_{-1}\circ j\in\mathcal{L}iso(X;E^{-1})$ is an isometric linear isomorphism from $X$ onto $E^{-1}$. 
The extrapolated operator $A_{-1}$ is also a positive sectorial operator in $E^{-1}$. In particular, $-A_{-1}$ generates an analytic $C^0$ semigroup $\{S(t)=e^{-A_{-1}t}\}_{t\geq 0}$ in $E^{-1}$ with $S(0)$ being the identity in $E^{-1}$. Using fractional power spaces for $A_{-1}$, we obtain a scale of reflexive Banach spaces $E^\sigma=D((A_{-1})^{\sigma+1})$ for $\sigma\geq -1$ endowed with the norms
$$\|x\|_{E^\sigma}=\|(A_{-1})^{\sigma+1}x\|_{E^{-1}},\ x\in E^\sigma,\ \sigma\geq -1.$$
The scale is dense and nested, that is, $E^\xi$ is densely and continuously embedded into $E^\sigma$ for $\xi\geq\sigma\geq -1$.
Moreover, we get operators $A_{\sigma}\colon E^\sigma\supset E^{\sigma+1}\to E^\sigma$ such that $A_\sigma x=A_{-1}x$, $x\in E^{\sigma+1}$
and $A_\sigma\in\mathcal{L}iso(E^{\sigma+1};E^{\sigma})$. In fact, we have
\begin{equation}\label{e:ISOMETRY}
(A_\sigma)^\theta\in\mathcal{L}iso(E^{\sigma+\theta};E^\sigma),\ \sigma\geq -1,\ \theta\geq 0.
\end{equation}
The operators $A_\sigma$, $\sigma\geq -1$, are positive sectorial operators in $E^\sigma$ and each $-A_\sigma$ generates an analytic $C^0$ semigroup $\{e^{-A_\sigma t}\}_{t\geq 0}$ in $E^\sigma$ such that $e^{-A_\sigma t}$ is a restriction of $S(t)$ to $E^\sigma$ and for some $a>0$
\begin{equation}\label{e:SMOOTHEXT}
t^{\xi-\sigma}\|S(t)x\|_{E^\xi}\leq C_{\sigma,\xi}e^{-a t}\norm{x}_{E^\sigma},\ x\in E^\sigma,\ \xi\geq\sigma\geq -1,\ t>0,
\end{equation}
with some constants $C_{\sigma,\xi}>0$. Thus we see that assumptions \ref{a:B1}, \ref{a:B2} 
are satisfied with $\{S(t)\}_{t\geq 0}$ and $\{E^\sigma\}_{\sigma\in\J}$, $\J=[-1,\infty)$, with regularity index being an identity. We choose an interval $\J_0\subset\J=[-1,\infty)$ and note that for any $\xi\geq\sigma\in\J_0$ we have $\sigma\leadsto\xi$ in the sense of Definition~\ref{def:SMOOTHS}.

\begin{ass}\label{e:ASS}
Let $\calf_{i}\colon E^{\alpha_i}\to E^{\beta_i}$, $i\in\cali$, satisfy \eqref{e:EPSREG}, \eqref{e:EPSREG2} with $c_0>0$, $\rho_i\geq 1$ and $\alpha_i\geq\beta_i\in\J_0$ for $i\in\cali$ such that \eqref{e:WAR2} holds. Let $\gamma\in\J_0$ satisfy \eqref{e:WAR1EQUIV} and \eqref{e:WAR4} and set $\beta:=\min\limits_{i\in\cali}\beta_i$.
\end{ass}

Under Assumption~\ref{e:ASS}, in the current setting Corollary~\ref{cor:INTERVAL} guarantees
that for any $u_0\in E^\gamma$ there exists a unique $\gamma$-solution $u$ of \eqref{e:VCF} defined on the maximal interval of existence $[0,\tau_{u_0})$, that is, $u(0)=u_0$,  
$$u\in C([0,\tau_{u_0});E^\gamma)\cap C((0,\tau_{u_0});E^{(\beta+1)_{-}})\cap\bigcap_{T\in(0,\tau_{u_0})}\bigcap_{i\in\cali} \mathcal{L}^\infty_{\alpha_i-\gamma}((0,T];E^{\alpha_i}),$$
$u$ satisfies 
the Duhamel formula
\begin{equation}\label{e:EXTVCF1}
u(t)=S(t)u_0+\sum_{i\in\cali}\int_{0}^{t}S(t-s)\calf_i(u(s))ds,\ t\in[0,\tau_{u_0}),
\end{equation} 
and if $\tau_{u_0}<\infty$ then
\begin{equation*}
\lim_{t\to\tau_{u_0}^{-}}\norm{u(t)}_{E^{\gamma}}=\infty.
\end{equation*}	
Moreover, by the density of the scale, \eqref{e:SMOOTHEXT} and Corollary~\ref{cor:INTERVAL}~(ii), if $\theta\in(0,\beta-\gamma+1)$ 
then
\begin{equation*}
t^{\theta}\norm{u(t)}_{E^{\gamma+\theta}}\to 0\ \text{ as }\ t\to 0^{+}.
\end{equation*}

We now show that the $\gamma$-solution exhibits further time regularity properties in this special situation. 

\begin{thm}\label{thm:EXTRAPOLATED}
In the setting of extrapolated fractional power scale, under Assumption~\ref{e:ASS} the $\gamma$-solution $u$ of \eqref{e:EXTVCF1} fulfills  
$$u\in C((0,\tau_{u_0});E^{\beta+1}),\ \dot{u}\in C((0,\tau_{u_0});E^\xi)\ \text{ for }\ \xi<\beta+1,$$ 
and satisfies in $E^\beta$ the differential equation
\begin{equation}\label{e:DIFFEQ}
\dot{u}(t)+A_\beta u(t)=\sum_{i\in\cali}\calf_i(u(t)),\ t\in(0,\tau_{u_0}).
\end{equation}
\end{thm}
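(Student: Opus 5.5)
The plan is to reduce to the classical Henry-type regularity theory for a semilinear equation with Hölder continuous forcing. Set $f(t):=\sum_{i\in\cali}\calf_i(u(t))$ for $t\in(0,\tau_{u_0})$. Since $\beta\leq\beta_i$ and the scale is nested, $f$ takes values in $E^\beta$. The first step is to show that $f$ is locally Hölder continuous from $(0,\tau_{u_0})$ into $E^\beta$. Given this, the shift Lemma~\ref{lem:SHIFT} and the semigroup property let us write, for fixed $\theta\in(0,\tau_{u_0})$ and $t\geq\theta$,
\[
u(t)=S(t-\theta)u(\theta)+\int_\theta^t S(t-s)f(s)\,ds .
\]
Here $u(\theta)\in E^{\xi}$ for every $\xi<\beta+1$. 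We then apply the standard result for analytic semigroups (e.g. \cite[Lemma 3.2.1/Theorem 3.2.2]{HE} applied to $A_\beta$ in $E^\beta$): if $f$ is Hölder continuous on $[\theta,T]$ with values in $E^\beta$, then the convolution term is $C^1$ into $E^\beta$ for $t>\theta$, takes values in $D(A_\beta)=E^{\beta+1}$, and satisfies the equation. The term $S(t-\theta)u(\theta)$ is smooth for $t>\theta$ with values in every $E^\xi$. This yields \eqref{e:DIFFEQ} in $E^\beta$ together with $u\in C((\theta,T];E^{\beta+1})$. Since $\theta$ is arbitrary, the statement holds on $(0,\tau_{u_0})$.

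For the Hölder step we use \eqref{e:EPSREG}. On $[\theta,T]$ the solution is bounded in each $E^{\alpha_i}$ by continuity (Corollary~\ref{cor:CONTINUITY}). Hence $\norm{\calf_i(u(t))-\calf_i(u(s))}_{E^{\beta_i}}\leq C\norm{u(t)-u(s)}_{E^{\alpha_i}}$, and it suffices to show that $u$ is locally Hölder into each $E^{\alpha_i}$, where $\alpha_i<\beta+1$ by \eqref{e:WAR2}. Choose $\xi$ with $\max_i\alpha_i<\xi<\beta+1$. From the shifted Duhamel formula, for $\theta\leq s<t\leq T$ we write
\[
u(t)-u(s)=(S(t-s)-I)u(s)+\int_s^t S(t-r)f(r)\,dr .
\]
We then use $\norm{(S(h)-I)x}_{E^{\alpha_i}}\leq Ch^{\xi-\alpha_i}\norm{x}_{E^\xi}$ (a standard consequence of \eqref{e:ISOMETRY} and analyticity). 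This requires $u$ to be bounded in $E^\xi$ on $[\theta,T]$, which follows from Corollary~\ref{cor:INTERVAL}~(i) (continuity in $E^\xi$) since $\gamma\leq\xi<\beta+1\leq\min_i\beta_i+1$. The integral term is bounded in $E^{\alpha_i}$ by $C\int_s^t(t-r)^{-(\alpha_i-\beta)}dr\leq C(t-s)^{1-\alpha_i+\beta}$ by \eqref{e:SMOOTHEXT}, using boundedness of $f$ in $E^\beta$ on $[\theta,T]$. This gives Hölder continuity of $u$ in $E^{\alpha_i}$, hence of $f$ in $E^\beta$.

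It remains to obtain $\dot u\in C((0,\tau_{u_0});E^\xi)$ for all $\xi<\beta+1$. From the equation, $\dot u=-A_\beta u+f$. Continuity into $E^{\beta}$ is immediate. For the higher $\xi$, the plan is to use the Henry-type estimate that $\dot u$ is locally Hölder into $E^\beta$. Then we differentiate the variation of constants formula via the standard representation $\dot u(t)=-A S(t-\theta)u(\theta)+S(t-\theta)f(\theta)+\ldots$. Alternatively, we apply the same Hölder argument to the difference quotients $\frac{u(t+h)-u(t)}{h}$, which solve a linear Duhamel equation with forcing $\frac{f(t+h)-f(t)}{h}$, and obtain continuity in $E^{\xi}$ with $\xi-\beta<1$ by the smoothing estimate \eqref{e:SMOOTHEXT}.

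I expect this last upgrade of $\dot u$ from $E^\beta$ to every $E^\xi$ with $\xi<\beta+1$ to be the main obstacle. It needs Hölder continuity of $\dot u$, or of $f$ with an extra derivative, in order to use smoothing of order strictly less than $1$. I would first try the classical route of \cite[Theorem 3.5.2]{HE}: establish that $t\mapsto f(t)$ is locally Lipschitz into $E^\beta$, using $u$ locally Lipschitz into $E^{\alpha_i}$, which follows from $\dot u\in C(E^{\beta})$ only if $\alpha_i\leq\beta$. If that fails, I would bootstrap through intermediate exponents.
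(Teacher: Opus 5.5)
\textbf{Where the proposal stands.} Your argument for the first two assertions is correct, and it is more economical than the paper's. The paper redoes by hand the improper-integral and right-derivative argument (Steps~3 and~5); you instead invoke Henry's linear lemma for $A_\beta$ in $E^\beta$. For H\"older continuity of $u$ in $E^{\alpha_i}$ you use the already known continuity in $E^\xi$, $\xi<\beta+1$ (Corollary~\ref{cor:INTERVAL}~(i)), in place of the paper's Volterra argument of Step~2. This gives $u\in C^1((0,\tau_{u_0});E^\beta)$ and \eqref{e:DIFFEQ}, and then $u\in C((0,\tau_{u_0});E^{\beta+1})$ via $A_\beta u=f-\dot u$.

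\textbf{The gap.} The claim $\dot u\in C((0,\tau_{u_0});E^\xi)$ for every $\xi<\beta+1$ is left open, and your H\"older step is too weak to give it. Since $(S(h)-I)u(s)$ costs $h^{\xi-\alpha_i}$ with $\xi<\beta+1$, you only obtain $t\mapsto\calf_i(u(t))$ locally H\"older of order $\vartheta<1+\beta-\alpha_i$. Now consider the representation
\[
\dot u(t)=-A_\beta S(t-\theta)u(\theta)+\sum_{i\in\cali}S(t-\theta)\calf_i(u(t))-\sum_{i\in\cali}\int_\theta^t A_{\beta_i}S(t-s)\bigl(\calf_i(u(s))-\calf_i(u(t))\bigr)\,ds .
\]
The integrand has $E^\xi$-norm of order $(t-s)^{\vartheta-1-(\xi-\beta_i)}$. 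So for an index with $\beta_i=\beta$ you reach only $\xi<\beta+\vartheta<\beta+1-(\alpha_i-\beta)$, which falls short of $\beta+1$ whenever $\alpha_i>\beta$.

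Your fallbacks do not close this. Lipschitz continuity from $\dot u\in C(E^\beta)$ needs $\alpha_i\le\beta$, as you note yourself. A bootstrap is not automatic either. Interpolating between $u\in C^1$ into $E^{\xi}$, $\xi<\beta+\vartheta$, and $u\in C$ into $E^{\beta+1}$ updates the order by $\vartheta\mapsto(1+\beta-\alpha_i)/(1-\vartheta)$. This iteration stalls below a fixed point $\le\tfrac12$ once $\alpha_i-\beta\ge\tfrac34$.

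\textbf{The missing idea.} Estimate the increment of $u$ through the equation it satisfies, not through the regularity of $u(s)$.
\begin{itemize}
\item In Step~2 the paper splits $u(t+h+\eps)-u(t+\eps)$ into three parts: $(S(h)-I)S(t)u(\eps)$; the integral of $S(t+h-s)\calf_i(u(s+\eps))$ over $(0,h)$; and the convolution of $S(t-s)$ with $\calf_i(u(s+h+\eps))-\calf_i(u(s+\eps))$.
\item The first two parts are bounded by $c\,h^\vartheta$ times a negative power of $t$, for every $\vartheta<1$, because $S(t)u(\eps)$ is smooth for $t>0$.
\item The third part is controlled by the increment itself through \eqref{e:EPSREG}. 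The singular Volterra inequality then yields \eqref{e:HOLDER} for every $\vartheta<1$.
\item Only with this can Step~4 take $\vartheta\in(\eta+\xi-\beta_i,1)$ and show that $A_{\beta_i}h_i$ is H\"older continuous into $E^\xi$ for all $\xi<\beta_i+1$. That gives continuity of $\dot u$ in $E^\xi$, not merely membership.
\end{itemize}

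\textbf{Repairing your difference-quotient idea.} It can be fixed along the same lines. Since $\norm{(u(\theta+h)-u(\theta))/h}_{E^\beta}$ is bounded, a Volterra inequality for $w_h=(u(\cdot+h)-u)/h$ in $E^{\max_i\alpha_i}$ makes $u$ locally Lipschitz into each $E^{\alpha_i}$. That step is a Gronwall-type argument, not ``the same H\"older argument''. Continuity of $\dot u$ in $E^\xi$ would still have to be proved afterwards.
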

   
\begin{proof}
We adapt the argument presented in the proof of \cite[Lemma 2.2.1]{C-D}.

\noindent 
\textit{Step 1.} Given $\sigma\in\J=[-1,\infty)$ and $\theta\in(0,1]$ we have for $v\in D(A_\sigma^\theta)=E^{\sigma+\theta}$ and $h>0$
$$(e^{-A_\sigma h}-I)v=\int_{0}^{h}\tfrac{d}{ds}(e^{-A_\sigma s}v)ds=-\int_{0}^{h}A_\sigma e^{-A_\sigma s}vds=-\int_{0}^{h}A_\sigma^{1-\theta}e^{-A_\sigma s}A_\sigma^\theta vds.$$
Thus, by \eqref{e:SMOOTHEXT} we get  for $v\in E^{\sigma+\theta}$ and $h>0$
\begin{equation}\label{e:DIFFTOZERO}
\begin{split}
&\|(e^{-A_\sigma h}-I)v\|_{E^\sigma}\leq
\int_{0}^{h}\norm{e^{-A_\sigma s}A_\sigma^\theta v}_{E^{\sigma+1-\theta}}ds
\leq C_{\sigma,\sigma+1-\theta}\theta^{-1}\norm{v}_{E^{\sigma+\theta}}h^\theta.
\end{split}
\end{equation}

\noindent 
\textit{Step 2.} Observe that for $0<\eps<\tau_{u_0}$ and $0\leq t<\tau_{u_0}-\eps$ we have
\begin{equation}\label{e:SHIFTEPS}
u(t+\eps)=S(t)u(\eps)+\sum_{i\in\cali}\int_0^t S(t-s)\calf_i(u(s+\eps))ds.
\end{equation} 
Let $0<T_1<T_2<\tau_{u_0}$ and choose $0<\eps<T_1$. For $0<h<T_2-\eps$ and $t\in(0,T_2-h-\eps]$, we get 
$$u(t+h+\eps)-u(t+\eps)=(S(h)-I)S(t)u(\eps)+\sum_{i\in\cali}\int_0^h S(t+h-s)\calf_i(u(s+\eps))ds$$
$$+\sum_{i\in\cali}\int_0^{t} S(t-s)[\calf_i(u(s+h+\eps))-\calf_i(u(s+\eps))]ds=\calj_1+\calj_2+\calj_3.$$
Let $\theta\in(0,1)$. Set $\alpha:=\max\limits_{i\in\cali}\alpha_i$ and note that $0\leq\alpha-\beta<1$ by \eqref{e:WAR2}. 
Using \eqref{e:EPSREG}, \eqref{e:EPSREG2}, \eqref{e:SMOOTHEXT} and \eqref{e:DIFFTOZERO}, we estimate each term in $E^{\alpha}$,
$$\norm{\calj_1}_{E^{\alpha}}=\norm{(S(h)-I)S(t)u(\eps)}_{E^{\alpha}}\leq C_{\alpha,\alpha+1-\theta}C_{\alpha,\alpha+\theta}\theta^{-1}\norm{u(\eps)}_{E^{\alpha}}\tfrac{h^\theta}{t^{\theta}}=\tfrac{c_1h^\theta}{t^\theta},$$
$$\norm{\calj_2}_{E^{\alpha}}
\leq c_0\sum_{i\in\cali}C_{\beta_i,\alpha}\big(\norm{u}_{C([\eps,T_2];E^{\alpha_i})}^{\rho_i}+1
\big)\int_0^h\tfrac{1}{(t+h-s)^{\alpha-\beta_i}}ds\leq\tfrac{c_2 h}{t^{\alpha-\beta}},$$
$$\norm{\calj_3}_{E^{\alpha}}\leq\int_0^{t} \tfrac{c_3}{(t-s)^{\alpha-\beta}}\norm{u(s+h+\eps)-u(s+\eps)}_{E^{\alpha}}ds.$$
Combining these estimates, we obtain with $\eta=\max\{\theta,\alpha-\beta\}\in(0,1)$
$$\norm{u(t+h+\eps)-u(t+\eps)}_{E^{\alpha}}\leq \tfrac{c_4 h^\theta}{t^\eta}+\int_0^{t}\tfrac{c_3}{(t-s)^{\alpha-\beta}}\norm{u(s+h+\eps)-u(s+\eps)}_{E^{\alpha}}ds.$$
By the Volterra type inequality (cf. \cite[Lemma 1.2.9]{C-D}) we see that with some $\bar{c}>0$
$$\sup_{t\in[\eps,T_2-h]}(t-\eps)^\eta\norm{u(t+h)-u(t)}_{E^{\alpha}}\leq\bar{c}h^\theta.$$
Consequently, for any $0<T_1<T_2<\tau_{u_0}$ and $\theta\in(0,1)$ there exists $c_{T_1,T_2,\theta}>0$ such that
\begin{equation}\label{e:HOLDER}
\max_{i\in\cali}\norm{u(t_2)-u(t_1)}_{E^{\alpha_i}}\leq c_{T_1,T_2,\theta}|t_2-t_1|^\theta,\ t_1,t_2\in[T_1,T_2].
\end{equation}

\noindent 
\textit{Step 3.} We will now show that $u(t)\in D(A_\beta)=E^{\beta+1}$ for $t\in(0,\tau_{u_0})$. It suffices to prove that for any $0<\eps<\tau_{u_0}$ we have $u(t+\eps)\in D(A_\beta)$ for $t\in(0,\tau_{u_0}-\eps)$. Taking \eqref{e:SHIFTEPS} into account,  
we will first show that for any $i\in\cali$
\begin{equation}\label{e:INDOMAIN}
\int_0^t S(t-s)\calf_i(u(s+\eps))ds\in D(A_{\beta_i})=E^{\beta_i+1}
\end{equation}
and
\begin{equation}\label{e:UNDERINTEGRAL}
A_{\beta_i}\int_0^t S(t-s)\calf_i(u(s+\eps))ds=\int_0^t A_{\beta_i}S(t-s)\calf_i(u(s+\eps))ds,\ t\in(0,\tau_{u_0}-\eps).
\end{equation}
Indeed, since the semigroup is $C^0$ in $E^{\beta_i}$, the function $[0,t]\ni s\mapsto S(t-s)\calf_i(u(s+\eps))ds\in E^{\beta_i}$ is continuous, hence integrable on $[0,t]$. Moreover, we have
$$S(t-s)\calf_{i}(u(r))
\in D(A_{\beta_i})=E^{\beta_i+1}\ \text{ for }\ s\in[0,t),\ r\in(0,\tau_{u_0})$$
and the function $[0,t)\ni s\mapsto A_{\beta_i}S(t-s)\calf_i(u(s+\eps))ds\in E^{\beta_i}$ is continuous. Then, given $\theta\in(0,1)$,
for any $0<t\leq T<\tau_{u_0}-\eps$ by \eqref{e:EPSREG}, \eqref{e:SMOOTHEXT} and \eqref{e:HOLDER} we have 
$$\int_{0}^{t}\|A_{\beta_i}S(t-s)(\calf_i(u(s+\eps))-\calf_i(u(t+\eps)))\|_{E^{\beta_i}}ds$$
$$\leq c_0C_{\beta_i,\beta_i+1}\int_{0}^{t}\tfrac{1}{t-s}\norm{u(t+\eps)-u(s+\eps)}_{E^{\alpha_i}}(1+\norm{u(t+\eps)}_{E^{\alpha_i}}^{\rho_i-1}+\norm{u(s+\eps)}_{E^{\alpha_i}}^{\rho_i-1})ds$$
$$\leq c_0  C_{\beta_i,\beta_i+1}c_{\eps,T+\eps,\theta}\big(1+2\norm{u}_{C([\eps,T+\eps];E^{\alpha_i})}^{\rho_i-1}\big)\theta^{-1}t^{\theta}.$$
Since
$$\int_{0}^{t}A_{\beta_i}S(t-s)\calf_i(u(t+\eps))ds=\int_{0}^{t}\tfrac{d}{ds}\left(S(t-s)\calf_i(u(t+\eps))\right)ds=(I-S(t))\calf_i(u(t+\eps)),$$
the improper integral
\begin{equation}\label{e:IMPROPER}
\begin{split}
\int_{0}^{t}A_{\beta_i}S(t-s)\calf_i(u(s+\eps))ds&=\int_{0}^{t}A_{\beta_i}S(t-s)(\calf_i(u(s+\eps))-\calf_i(u(t+\eps)))ds\\
&+(I-S(t))\calf_i(u(t+\eps))
\end{split}
\end{equation}
exists in $E^{\beta_i}$. This implies \eqref{e:INDOMAIN} and \eqref{e:UNDERINTEGRAL}.

Since the scale is nested and $S(t)u(\eps)\in D(A_\beta)$, we get $u(t+\eps)\in D(A_\beta)=E^{\beta+1}$ and
$$A_\beta u(t+\eps)=A_\beta S(t)u(\eps)+\sum_{i\in\cali}\int_0^t A_\beta S(t-s)\calf_i(u(s+\eps))ds,\ t\in (0,\tau_{u_0}-\eps).$$ 
We have also shown that
\begin{equation*}
h_i(t)=\int_0^t S(t-s)(\calf_i(u(s+\eps))-\calf_i(u(t+\eps)))ds,\ t\in(0,\tau_{u_0}-\eps),
\end{equation*}
has values in $D(A_{\beta_i})=E^{\beta_i+1}$ and 
\begin{equation}\label{e:AHI}
A_{\beta_i}h_i(t)=\int_0^t A_{\beta_i}S(t-s)(\calf_i(u(s+\eps))-\calf_i(u(t+\eps)))ds.
\end{equation}

\noindent 
\textit{Step 4.} We will prove that for any $0<T<\tau_{u_0}-\eps$,  $\xi<\beta_i+1$ and $\eta\in(0,\beta_i+1-\xi)$ there exists $\bar{c}_{T,\xi,\eta}>0$ such that
\begin{equation}\label{e:HOLDER2}
\norm{A_{\beta_i}h_i(t_2)-A_{\beta_i}h_i(t_1)}_{E^{\xi}}\leq \bar{c}_{T,\xi,\eta}\abs{t_2-t_1}^{\eta},\ t_1,t_2\in(0,T].
\end{equation}
Since the scale is nested, it suffices to show \eqref{e:HOLDER2} for $\beta_i\leq\xi<\beta_i+1$.
We have for $0<t_1<t_2\leq T$
\begin{equation*}
\begin{split}
A_{\beta_i}h_i(t_2)-A_{\beta_i}h_i(t_1)&=\int_0^{t_1}A_{\beta_i}[S(t_2-s)-S(t_1-s)](\calf_i(u(s+\eps))-\calf_i(u(t_1+\eps)))ds\\
&+S(t_2-t_1)[I-S(t_1)](\calf_i(u(t_1+\eps))-\calf_i(u(t_2+\eps)))\\
&+\int_{t_1}^{t_2}A_{\beta_i}S(t_2-s)(\calf_i(u(s+\eps))-\calf_i(u(t_2+\eps))ds=\calj_1+\calj_2+\calj_3.
\end{split}
\end{equation*}
We estimate each of these terms using \eqref{e:ISOMETRY}, \eqref{e:DIFFTOZERO} and \eqref{e:HOLDER}. Taking $\theta\in(\eta+\xi-\beta_i,1)$, we get
\begin{equation*}
\begin{split}
\norm{\calj_1}_{E^\xi}&\leq\int_{0}^{t_1}\|[S(t_2-t_1)-I](A_{-1})^{\xi+2}S(t_1-s)(\calf_i(u(s+\eps))-\calf_i(u(t_1+\eps)))\|_{E^{-1}}ds\\
&\leq c_0C_{-1,-\eta}C_{\beta_i,\eta+\xi+1}\eta^{-1}(t_2-t_1)^{\eta}\int_0^{t_1}\tfrac{1}{(t_1-s)^{\eta+\xi+1-\beta_i}}\norm{u(s+\eps)-u(t_1+\eps)}_{E^{\alpha_i}}\\
&\qquad\qquad\qquad\qquad\qquad\qquad\qquad\qquad\cdot(1+\norm{u(s+\eps)}_{E^{\alpha_i}}^{\rho_i-1}+\norm{u(t_1+\eps)}_{E^{\alpha_i}}^{\rho_i-1})ds\\
&\leq c_0 C_{-1,-\eta}C_{\beta_i,\eta+\xi+1}\eta^{-1}c_{\eps,T+\eps,\theta}\big(1+2\norm{u}_{C([\eps,T+\eps];E^{\alpha_i})}^{\rho_i-1}\big)\tfrac{T^{\theta-\eta-\xi+\beta_i}}{\theta-\eta-\xi+\beta_i}(t_2-t_1)^{\eta}.
\end{split} 
\end{equation*}
Moreover, $\calj_2$ and $\calj_3$ are estimated as follows:
\begin{equation*}
\begin{split}
\|\calj_2\|_{E^\xi}&\leq C_{\beta_i,\xi}(t_2-t_1)^{\beta_i-\xi}\|[S(t_1)-I](\calf_i(u(t_1+\eps))-\calf_i(u(t_2+\eps)))\|_{E^{\beta_i}}\\
&\leq c_0C_{\beta_i,\xi} (C_{\beta_i,\beta_i}+1)\big(1+2\norm{u}_{C([\eps,T+\eps];E^{\alpha_i})}^{\rho_i-1}\big)c_{\eps,T+\eps,\eta+\xi-\beta_i}(t_2-t_1)^\eta,
\end{split}
\end{equation*}
\begin{equation*}
\begin{split}
\|\calj_3\|_{E^\xi}&\leq\int_{t_1}^{t_2}\|S(t_2-s)(\calf_i(u(s+\eps))-\calf_i(u(t_2+\eps)))\|_{E^{\xi+1}}ds\\
&\leq c_0C_{\beta_i,\xi+1}\big(1+2\norm{u}_{C([\eps,T+\eps];E^{\alpha_i})}^{\rho_i-1}\big)c_{\eps,T+\eps,\eta+\xi-\beta_i}\eta^{-1}(t_2-t_1)^\eta.
\end{split}
\end{equation*}
Combining these bounds, we obtain \eqref{e:HOLDER2}.

\noindent 
\textit{Step 5.} Let $0<t<T<\tau_{u_0}$. For $0<h<T-t$ we have
\begin{equation}\label{e:ILORAZ}
\tfrac{u(t+h)-u(t)}{h}-\tfrac{1}{h}(S(h)-I)u(t)=\sum_{i\in\cali}\tfrac{1}{h}\int_{t}^{t+h}S(t+h-s)\calf_i(u(s))ds.
\end{equation}
Since $u(t)\in D(A_\beta)=E^{\beta+1}$, we see that $\frac{1}{h}(S(h)-I)u(t)\to -A_\beta u(t)$ in $E^\beta$ as $h\to 0^{+}$. Moreover, 
each $E^{\beta_i}$ is embedded into $E^\beta$ and we have
$$\Big\|\tfrac{1}{h}\int_{t}^{t+h}S(t+h-s)\calf_i(u(s))ds-\calf_i(u(t))\Big\|_{E^{\beta_i}}
\leq \sup_{s\in[t,t+h]}\|S(t+h-s)\calf_i(u(s))-\calf_i(u(t))\|_{E^{\beta_i}}$$
$$\leq \sup_{r\in[0,h]}\|S(r)\calf_i(u(t+h-r))-\calf_i(u(t+h-r))\|_{E^{\beta_i}}+\sup_{s\in[t,t+h]}\|\calf_i(u(s))-\calf_i(u(t))\|_{E^{\beta_i}}.$$
We conclude that the right-hand side tends to zero as $h\to 0^{+}$, since $s\mapsto \calf_i(u(s))\in E^{\beta_i}$ is continuous at $t$ and for the compact subset $K=\calf_i(u([t,T]))$ of $E^{\beta_i}$ the function $[0,T-t]\times K\ni (r,u)\to S(r)u\in E^{\beta_i}$
is uniformly continuous. In consequence, passing to the limit in \eqref{e:ILORAZ} with $h\to 0^{+}$, we obtain in $E^\beta$ 
$$\dot{u}_{+}(t)+A_\beta u(t)=\sum_{i\in\cali}\calf_i(u(t)),\ t\in(0,\tau_{u_0}).$$
Let $0<2\eps<\tau_{u_0}$. Since $u(\eps)\in D(A_\beta)$, by \eqref{e:SHIFTEPS}, \eqref{e:IMPROPER} and \eqref{e:AHI} we have
\begin{equation}\label{e:PRAWOSTRONNAPOCHODNA}
\dot{u}_{+}(t+\eps)
=-S(t)A_\beta u(\eps)-\sum_{i\in\cali}A_{\beta_i}h_i(t)+\sum_{i\in\cali}S(t)\calf_i(u(t+\eps)),\ t\in(0,\tau_{u_0}-\eps).
\end{equation}
Recall that $S(\cdot)A_\beta u(\eps)\in C([0,\infty);E^\beta)$ and $A_{\beta_i}h_i(\cdot), S(\cdot)\calf_i(u(\cdot+\eps))\in C((0,\tau_{u_0}-\eps);E^{\beta_i})$. Hence we infer that $\dot{u}_{+}(\cdot+\eps)\in C((0,\tau_{u_0}-\eps);E^\beta)$.
We define $E^\beta$-valued function
$$w(t)=\int_\eps^t \dot{u}_{+}(s+\eps)ds+u(2\eps),\ t\in[\eps,\tau_{u_0}-\eps),$$
and observe that $w\in C^1([\eps,\tau_{u_0}-\eps);E^\beta)$ with $\dot{w}(t)=\dot{u}_{+}(t+\eps)$ for $t\in[\eps,\tau_{u_0}-\eps)$ and $w(\eps)=u(2\eps)$. Thus for any linear continuous functional $x^{*}\in (E^\beta)^{*}$ the function $\phi\colon[\eps,\tau_{u_0}-\eps)\to\R$ given by
$$\phi(t)=x^{*}(w(t)-u(t+\eps)),\ t\in [\eps,\tau_{u_0}-\eps),$$
is continuous, right-hand side differentiable with $\dot{\phi}_{+}\equiv 0$ and $\phi(\eps)=0$. Therefore, we obtain $\phi\equiv 0$ and, in consequence, $w(t)=u(t+\eps)$ for $t\in[\eps,\tau_{u_0}-\eps)$. This shows that $u\in C^1([2\eps,\tau_{u_0});E^\beta)$.
As $\eps$ can be arbitrarily small, we finally see that $u\in C^1((0,\tau_{u_0});E^\beta)$ and \eqref{e:DIFFEQ} holds.
Moreover, by \eqref{e:PRAWOSTRONNAPOCHODNA} we have
$$\dot{u}(t+\eps)=-S(t)A_\beta u(\eps)-\sum_{i\in\cali}A_{\beta_i}h_i(t)+\sum_{i\in\cali}S(t)\calf_i(u(t+\eps)),\ t\in(0,\tau_{u_0}-\eps).$$
By \eqref{e:HOLDER2} the right-hand side has values and is continuous in $E^\xi$ for $\xi<\beta+1$, so it follows that 
$\dot{u}\in C((0,\tau_{u_0});E^\xi)$, which ends the proof.
\end{proof}

\begin{rem}
(i) The setting of this section allows to consider $\calf=\sum\limits_{i\in\cali}\calf_i$ as a single map acting from $E^\alpha$ to $E^\beta$ with $\alpha=\max\limits_{i\in\cali}\alpha_i$ and $\beta=\min\limits_{i\in\cali}\beta_i$. 
However, our condition \eqref{e:WAR1EQUIV} is less restrictive than its counterpart for the single map $\calf$ satisfying
\eqref{e:EPSREG}, \eqref{e:EPSREG2} with common $\alpha, \beta$ and $\rho=\max\limits_{i\in\cali}\rho_i$.       

\noindent 
(ii) The results of Theorem~\ref{thm:EXTRAPOLATED} agree with the subcritical case of \cite[Theorem 2.2]{ACRB1999} for $\calf_i$ being $\eps_i$-regular maps with $\eps_i>0$, $i\in\cali$. In this case, $\calf_i\colon X^{1+\eps_i}\to X^{\gamma_i}$ is an $\eps_i$-regular map relative to the pair $(X^1,X^0)$ if there exist $\rho_i>1$, $\gamma_i\in(\rho_i\eps_i,1)$, $c_0>0$ and
$$\|\calf_i(\phi)-\calf_i(\psi)\|_{X^{\gamma_i}}\leq c_0\|\phi-\psi\|_{X^{1+\eps_i}}(1+\|\phi\|_{X^{1+\eps_i}}^{\rho_i-1}+\|\psi\|_{X^{1+\eps_i}}^{\rho_i-1}),\ \phi,\psi\in X^{1+\eps_i},$$
where $X^\sigma=E^{\sigma-1}$, $\sigma\geq 0$, are the fractional power spaces. 
Comparing $\gamma$-solutions of Theorem~\ref{thm:EXTRAPOLATED} with $\eps$-regular solutions of \cite{ACRB1999} starting from $u_0\in X^1$, it suffices to take $\gamma=0$, $\beta_i=\gamma_i-1$, $\alpha_i=\eps_i$. Then \eqref{e:WAR1EQUIV} and \eqref{e:WAR4} hold, whereas  
\eqref{e:WAR2} is a consequence of the assumption \cite[(2.5)]{ACRB1999}, which means that
$\alpha-\beta<1$ with $\alpha=\max\limits_{i\in\cali}\alpha_i$ and $\beta=\min\limits_{i\in\cali}\beta_i$. Thus the assumptions used in the subcritical case of \cite{ACRB1999} are a special case of our setting.
\end{rem}

\section{Applications}\label{sec:APPLICATIONS}

\subsection{Modified viscous Cahn-Hilliard equation}

To illustrate the results of Section~\ref{sec:FRACTIONAL}, we consider the Cauchy problem for the modified viscous Cahn-Hilliard equation
\begin{equation}\label{e:mvCH}
\dot{u}=(\delta-\Delta)(\Delta u+f(x,u) -\mu \dot{u}),\ t>0,\ x\in\R^{N},
\end{equation}
subject to the initial condition
\begin{equation}\label{e:INIT}
u(0)=u_0,
\end{equation}
where $\delta,\mu>0$ are positive constants. Note that \eqref{e:mvCH} is a modification of the viscous Cahn-Hilliard equation
\begin{equation*}
\dot{u}+\Delta^2u+\Delta f(x,u) -\mu \Delta \dot{u}=0,\ t>0,\ x\in\R^{N},
\end{equation*}
if we take formally $\delta=0$ in \eqref{e:mvCH}. We also remark that the long-time dynamics of \eqref{e:mvCH} with $\mu=0$
was studied in \cite{CHRB2012c}.    

We assume that $f$ takes the form
\begin{equation}\label{e:F}
f(x,s)=g(x)+m(x)s+\sum_{i=1}^{n}f_i(x,s),\ x\in\R^{N},\ s\in\R,
\end{equation}
with mildly integrable potential $m\colon \R^N\to\R$ belonging to the locally uniform Lebesgue space $L^{r}_{U}(\R^{N})$ with finite norm
\begin{equation}\label{e:MLRU}
\norm{m}_{L^{r}_{U}(\R^{N})}=\sup_{y\in \R^N}\norm{m}_{L^r(B(y,1))}<\infty\ \text{ for some }\, r>\tfrac{N}{2},\ r\geq 2,
\end{equation}
\begin{equation}\label{e:CONDG}
g\in L^{2}(\R^{N}),
\end{equation}
and  $f_i\colon\R^N\times \R\to \R$, $i=1,\ldots,n$, such that
\begin{equation}\label{e:CONDF0}
f_i(x,0) = 0,\ x\in \R^N,
\end{equation}
\begin{equation}\label{e:CONDLIPF0}
|f_{i}(x,s_1)-f_{i}(x,s_2)|\leq c_0 |s_1-s_2|(1+|s_1|^{\rho_i-1} +|s_2|^{\rho_i-1}), \ x\in\R^{N},\ s_1, s_2\in \R,
\end{equation}
where $c_0$ is a certain positive constant and each exponent 
\begin{equation}\label{e:CONDRHO}
\rho_i\geq 1\text{ is arbitrarily large for }N=1,2\text{ and }1\leq \rho_i<\tfrac{N+2}{N-2}\text{ for }N\geq 3.
\end{equation}

Rescalling the time in \eqref{e:mvCH} 
by factor $\mu$ we formally obtain
\begin{equation*}
[1+\kappa(\delta-\Delta)^{-1}]\dot{u}=\Delta u+f(x,u),
\end{equation*}
with $\kappa=\frac{1}{\mu}>0$. 
Since 
$$[1+\kappa(\delta-\Delta)^{-1}]^{-1}=(\delta-\Delta)(\delta+\kappa-\Delta)^{-1}=I-\kappa(\delta+\kappa-\Delta)^{-1}=:B_\kappa,$$
the modified viscous Cahn-Hilliard equation \eqref{e:mvCH} leads to the evolution equation
\begin{equation}\label{e:PROB}
\dot{u}=B_\kappa\Delta u +B_\kappa f(x,u),\ t>0,\ x\in\R^{N}.
\end{equation}
Note that $B_\kappa\Delta u=\Delta u+\kappa u-\kappa(\delta+\kappa)(\delta+\kappa-\Delta)^{-1}u$,
so \eqref{e:PROB} becomes the reaction-diffusion equation
\begin{equation}\label{e:MAIN}
\dot{u}=\Delta u+\kappa u-\kappa(\delta+\kappa)(\delta+\kappa-\Delta)^{-1}u+f(x,u)-\kappa(\delta+\kappa-\Delta)^{-1}f(x,u).
\end{equation}

\begin{rem}
Instead of \eqref{e:mvCH}, we could alternatively consider
\begin{equation}\label{e:m2vCH}
(1-\mu)\dot{u}=(\delta-\Delta)(\Delta u+f(x,u) -\mu \dot{u}),\ t>0,\ x\in\R^{N},
\end{equation}
with $0<\mu\leq 1$ and $\delta>0$. This is a modification of the viscous Cahn-Hilliard equation (for $\delta=0$) in the form
\begin{equation*}
(1-\mu)\dot{u}+\Delta^2u+\Delta f(x,u) -\mu \Delta \dot{u}=0,\ t>0,\ x\in\R^{N}.
\end{equation*}
For $\kappa=\frac{1}{\mu}-1\geq 0$, \eqref{e:m2vCH} again leads to the reaction-diffusion equation \eqref{e:MAIN}.
Note that the asymptotics of \eqref{e:m2vCH} was investigated in \cite{DS10} with more regular $f$ than assumed here. 
\end{rem}

Setting $\cali=\{0,\ldots,2n+1\}$, we rewrite \eqref{e:MAIN} with $\delta>0$, $\kappa\geq 0$, subject to \eqref{e:INIT}, as
\begin{equation}\label{e:ACPVCH}
\begin{cases}
\dot{u}+Au=\sum\limits_{i\in\cali}\calf_i(u),\ t>0,\\
u(0)=u_0,
\end{cases}
\end{equation}
where $Au=(1-\Delta)u$, $\calf_0(u)= m(\cdot)u+(\kappa+1)u+g$, 
$$\calf_i(u)=f_{i}(\cdot,u)\text{ and }\calf_{n+1+i}(u)=-\kappa(\delta+\kappa-\Delta)^{-1}(f_i(\cdot,u))\text{ for }i=1,\ldots,n,$$
$$\calf_{n+1}(u)=-\kappa(\delta+\kappa-\Delta)^{-1}(m(\cdot)u+(\delta+\kappa)u+g).$$
Note that $A$ is a positive sectorial operator in $L^2(\R^{N})$ with $\dom(A)=H^2(\R^{N})$. Thus, as described in Section~\ref{sec:FRACTIONAL}, it generates an extrapolated fractional power scale of Banach spaces $E^\sigma$, $\sigma\in\J=[-1,\infty)$, with corresponding operators $A_\sigma$. In particular, $A$ extends uniquely to a sectorial operator $A_{-1}\colon E^{-1}\supset E^0=L^2(\R^{N})\to E^{-1}$ and $-A_{-1}$ generates an~analytic $C^0$ semigroup $\{S(t)\}_{t\geq 0}$ in $E^{-1}$ satisfying \eqref{e:SMOOTHEXT}. 
Moreover, we have the following characterization of $E^\sigma$ for $\sigma\in\J_0=[-1,1]$:
$$E^{\sigma}=\begin{cases}
H^{2\sigma}(\R^{N}),&\ \sigma\in[0,1],\\
H^{2\sigma}(\R^{N})=(H^{-2\sigma}(\R^{N}))^*,&\ \sigma\in[-1,0).
\end{cases}$$

We decompose $\calf_0=\calf_{01}+\calf_{02}$ into $\calf_{01}u=m(\cdot)u$ and $\calf_{02}u=(\kappa+1)u+g$.
In order to examine its behavior on the scale, we recall from \cite[Appendix A]{CHRB2012c} a property of the multiplication operator by a potential $m\in L^r_{U}(\R^N)$. Below $\|\cdot\|$ denotes the standard norm in $L^2(\R^N)$.

\begin{lem}\label{lem:PROPERTY}
Let $m\in L^{r}_{U}(\R^{N})$ with $r>\frac{N}{2}$, $r\geq 2$.
Then, given 
$\sigma\in[\frac{N}{2r},1)$ such that $\sigma>\frac{N}{2r}$ if $r=2$,
for any $\phi\in H^{2\sigma}(\R^{N})$ the multiplication $m\phi$ uniquely defines an element of $L^2(\R^{N})$ such that
\begin{equation}\label{e:A0}
\|m\phi\|\leq c\|m\|_{L^{r}_{U}(\R^{N})}\|\phi\|_{H^{2\sigma}(\R^{N})},\ \phi\in H^{2\sigma}(\R^{N}).
\end{equation}
\end{lem}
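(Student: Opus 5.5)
The plan is to localize: cover $\R^N$ by unit cubes (or balls $B(y,1)$ centred at lattice points $y\in\Z^N$), apply Hölder's inequality on each cube, and use a Sobolev embedding of $H^{2\sigma}$ into a suitable Lebesgue space. On a unit cube $Q_k$ we write
\[
\|m\phi\|_{L^2(Q_k)}\leq\|m\|_{L^r(Q_k)}\|\phi\|_{L^{q}(Q_k)},\qquad \tfrac12=\tfrac1r+\tfrac1q,
\]
so $q=\frac{2r}{r-2}$ (with $q=\infty$ when $r=2$). Since $\|m\|_{L^r(Q_k)}\leq c\|m\|_{L^r_U(\R^N)}$ uniformly in $k$ (each cube is covered by a bounded number of unit balls), summing the squares gives
\[
\|m\phi\|^2\leq c\|m\|_{L^r_U(\R^N)}^2\sum_{k}\|\phi\|_{L^q(Q_k)}^2 .
\]

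The key step is then the \emph{local} Sobolev estimate $\|\phi\|_{L^q(Q_k)}\leq c\|\phi\|_{H^{2\sigma}(Q_k^*)}$ with a constant independent of $k$, where $Q_k^*$ is a slightly enlarged cube (or even $Q_k$ itself via a bounded extension operator), together with the almost-orthogonality $\sum_k\|\phi\|_{H^{2\sigma}(Q_k^*)}^2\leq c\|\phi\|_{H^{2\sigma}(\R^N)}^2$. The embedding $H^{2\sigma}\emb L^q$ holds if $2\sigma-\frac N2\geq-\frac Nq=\frac Nr-\frac N2$, i.e.\ $\sigma\geq\frac N{2r}$, and for $q<\infty$ (i.e.\ $r>2$) the borderline $\sigma=\frac N{2r}$ is allowed. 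For $r=2$ one needs $L^\infty$, requiring $2\sigma>\frac N2$, whence the strict inequality $\sigma>\frac N{2r}$ in that case. Translation invariance gives uniformity in $k$.

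To avoid fractional norms on cubes, I would alternatively use a smooth partition of unity $\{\chi_k\}$ with $\sum\chi_k^2\equiv1$ subordinate to the covering by balls $B(k,1)$, $k\in\Z^N$, and write $\|m\phi\|^2=\sum_k\|m\chi_k\phi\|^2\leq c\|m\|_{L^r_U}^2\sum_k\|\chi_k\phi\|_{L^q}^2\leq c\|m\|^2_{L^r_U}\sum_k\|\chi_k\phi\|^2_{H^{2\sigma}(\R^N)}$. The main obstacle is the last step, namely the bound $\sum_k\|\chi_k\phi\|^2_{H^{2\sigma}}\leq c\|\phi\|^2_{H^{2\sigma}}$ for non-integer $2\sigma\in(0,2)$. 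I would prove it at the endpoints $2\sigma=0$ (trivial) and $2\sigma=2$ (Leibniz rule with uniformly bounded derivatives of $\chi_k$ and finite overlap), viewing $\phi\mapsto(\chi_k\phi)_k$ as a bounded map $H^s\to\ell^2(H^s)$ for $s=0,2$, and then interpolate using $[\ell^2(H^0),\ell^2(H^2)]_{\sigma}=\ell^2(H^{2\sigma})$. Finally, uniqueness of the element $m\phi\in L^2$ follows since for $\phi\in H^{2\sigma}$ the product is a pointwise a.e.\ defined measurable function whose $L^2$ norm is finite by the estimate (equivalently, by density of $C_c^\infty$ and continuity of the bound \eqref{e:A0}).
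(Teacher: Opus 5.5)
Your proposal is correct and follows essentially the same route as the paper: decompose $\R^N$ into unit cubes $Q_k$, apply H\"older with $q=\frac{2r}{r-2}$ on each cube, use the uniform local embedding $H^{2\sigma}(Q_k)\emb L^{q}(Q_k)$, and sum using $\sum_k\|\phi\|^2_{H^{2\sigma}(Q_k)}\leq c\|\phi\|^2_{H^{2\sigma}(\R^N)}$. The differences are cosmetic. The paper bounds the pairing $\psi\mapsto\int m\phi\psi\,dx$ and invokes the Riesz theorem instead of estimating $\|m\phi\|$ directly, and it cites \cite[Lemma 2.4]{ACDRB04} for the summation estimate where you sketch a self-contained interpolation proof. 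In that sketch, note that the balls $B(k,1)$, $k\in\Z^N$, do not cover $\R^N$ when $N\geq 4$, so you should take a larger radius; this only changes the constant.
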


\begin{proof}
Given $\phi\in H^{2\sigma}(\R^{N})$, the functional $L^2(\R^{N})\ni \psi\mapsto \int_{\R^{N}}m\phi\psi dx$
is linear and continuous, since for $p=\frac{2r}{r-2}$, $r\in[2,\infty]$, we have
$$\abs{\int_{\R^{N}}m\phi\psi dx}\leq \sum_{k\in\Z^{N}}\int_{Q_k}|m||\phi||\psi|dx\leq \|m\|_{L^r_U(\R^{N})}\sum_{k\in\Z^{N}}\|\psi\|_{L^2(Q_k)}\|\phi\|_{L^{p}(Q_k)}$$
$$\leq
\|m\|_{L^r_U(\R^{N})}\|\psi\|\Big(\sum_{k\in\Z^{N}}\|\phi\|_{L^{p}(Q_k)}^2\Big)^{1/2},$$
where $\R^{N}$ is covered by cubes $Q_k$, centered at $k\in\Z^{N}$ and having unitary edges parallel to the axes so that $\R^{N}=\bigcup_{k\in\Z^{N}}\overline{Q_k}$ and $Q_k\cap Q_l=\emptyset$ for $k\neq l$. 
Using the embedding of $H^{2\sigma}(Q_k)$ into $L^{p}(Q_k)$, we obtain by \cite[Lemma 2.4]{ACDRB04}
$$\abs{\int_{\R^{N}}m\phi\psi dx}\leq c\|m\|_{L^r_U(\R^{N})}\|\psi\|\Big(\sum_{k\in\Z^{N}}\|\phi\|_{H^{2\sigma}(Q_k)}^2\Big)^{1/2}\leq c\|m\|_{L^{r}_{U}(\R^{N})}\|\phi\|_{H^{2\sigma}(\R^{N})}\|\psi\|.$$
By the Riesz theorem there exists a unique $\phi^{*}\in L^2(\R^{N})$ such that $m\phi=\phi^*$ for $\phi\in H^{2\sigma}(\R^{N})$ and 
$$\|m\phi\|=\|\phi^{*}\|=\sup_{\|\psi\|=1}\abs{\int_{\R^{N}}\phi^{*}\psi dx}=\sup_{\|\psi\|=1}\abs{\int_{\R^{N}}m\phi\psi dx}\leq c\|m\|_{L^{r}_{U}(\R^{N})}\|\phi\|_{H^{2\sigma}(\R^{N})},$$
which proves \eqref{e:A0}.
\end{proof} 

Therefore, taking $\beta_0=0$ and $\alpha_0\in[\max\{\frac{N}{2r},\frac{1}{2}\},1)$ such that $\alpha_0>\frac{N}{2r}$ if $r=2$, by Lemma~\ref{lem:PROPERTY} 
and the fact that $\calf_{02}$ is a bounded linear operator from $L^2(\R^{N})$ into itself, we get
\begin{equation*}
\|\calf_{0}(\phi)-\calf_{0}(\psi)\|_{E^{\beta_0}}\leq c(\|m\|_{L^r_U(\R^N)}+1)\|\phi-\psi\|_{E^{\alpha_0}},\ \phi,\psi\in E^{\alpha_0}=H^{2\alpha_0}(\R^{N}).
\end{equation*}
Hence \eqref{e:EPSREG} holds with $\rho_0=1$ and in \eqref{e:WAR1EQUIV} we have $\omega_0=1-\alpha_0>0$.
 
Following \cite[Lemma B.1]{CHRB2012c}, \cite[Lemma 3.1]{CHRB2012a} 
by \eqref{e:CONDF0}, \eqref{e:CONDLIPF0}
there exists a decomposition 
\begin{equation}\label{e:DECOMPOSITION}
f_i(x,s)=f_{i1}(x,s)+f_{i2}(x,s),\ x\in \R^N,\ s\in\R,
\end{equation} 
such that $f_{i1}(x,0)=f_{i2}(x,0)=0$,
$f_{i1}\colon\R^N\times\R\to\R$ is globally Lipschitz w.r.t. the second variable and for some $c>0$
\begin{equation}\label{e:GROWTHF02}
|f_{i2}(x,s_1)-f_{i2}(x,s_2)|\leq c|s_1-s_2|(|s_1|^{\rho_i-1}+|s_2|^{\rho_i-1}),\ x\in\R^{N},\ s_1, s_2\in\R.
\end{equation}

Since our goal is to apply Theorem~\ref{thm:EXTRAPOLATED} and find $\gamma$-solutions of 
\eqref{e:ACPVCH} with $u_0\in H^1(\R^{N})$, in the lemma below we examine the admissible growth rate $\rho\geq 1$ such that
\begin{equation}\label{e:GROWTHH}
|h(x,s_1)-h(x,s_2)|\leq c|s_1-s_2|(|s_1|^{\rho-1}+|s_2|^{\rho-1}),\ x\in\R^{N},\ s_1, s_2\in\R,
\end{equation}
gives rise to a Nemytskii operator between suitable spaces $H^{2\alpha}(\R^{N})$ and $H^{2\beta}(\R^{N})$.

\begin{lem}\label{lem:SUITABLERHO}
Assume that $h\colon\R^{N}\times\R\to\R$ satisfies \eqref{e:GROWTHH} with $\rho\geq 1$ and let $\frac{1}{2}\leq \alpha<1$ and $-\frac{N}{4}<\beta\leq 0$, $\beta>\alpha-1$ satisfy
\begin{equation}\label{e:RESTRICTIONS}
\rho\geq 1-\tfrac{4\beta}{N},\quad\rho<\tfrac{\beta+\frac{1}{2}}{\alpha-\frac{1}{2}}\ \text{ if }\ \tfrac{1}{2}<\alpha<1\ \text{ and }\ \rho\leq\tfrac{\frac{N}{4}-\beta}{\frac{N}{4}-\alpha}\ \text{ if }\ \alpha<\tfrac{N}{4}.
\end{equation}
Then $h$ gives rise to the Nemytskii operator $\calh\colon H^{2\alpha}(\R^{N})\to H^{2\beta}(\R^{N})$ such that 
\begin{equation}\label{e:EPSREGH}
\|\calh(\phi)-\calh(\psi)\|_{H^{2\beta}(\R^{N})}\leq c\|\phi-\psi\|_{H^{2\alpha}(\R^{N})}(\|\phi\|_{H^{2\alpha}(\R^{N})}^{\rho-1}+\|\psi\|_{H^{2\alpha}(\R^{N})}^{\rho-1})
\end{equation}
holds for $\phi,\psi\in H^{2\alpha}(\R^{N})$, $0\leq\alpha-\beta<1$ and for $\gamma=\frac{1}{2}$ we have $\beta\leq\gamma\leq\alpha$ and
\begin{equation}\label{e:W1}
1+\beta-\alpha\rho+(\rho-1)\gamma>0.
\end{equation}
\end{lem}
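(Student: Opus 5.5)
The plan is a duality argument combined with Hölder's inequality and Sobolev embeddings. Since $\beta\leq 0$, we have $H^{2\beta}(\R^N)=(H^{-2\beta}(\R^N))^*$. Thus it suffices to bound, for $\xi\in H^{-2\beta}(\R^N)$ with $\|\xi\|_{H^{-2\beta}}\leq 1$, the pairing $\int_{\R^N}(h(x,\phi)-h(x,\psi))\xi\,dx$. By \eqref{e:GROWTHH} this pairing is dominated by
$$c\int_{\R^N}|\phi-\psi|\bigl(|\phi|^{\rho-1}+|\psi|^{\rho-1}\bigr)|\xi|\,dx.$$
We apply Hölder with exponents $p$ (for $\phi-\psi$), $p/(\rho-1)$ (for $|\phi|^{\rho-1}$) and $q$ (for $\xi$), where
$$\tfrac{\rho}{p}+\tfrac{1}{q}=1.$$
This gives a bound by $\|\phi-\psi\|_{L^p}(\|\phi\|_{L^p}^{\rho-1}+\|\psi\|_{L^p}^{\rho-1})\|\xi\|_{L^q}$. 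The task then reduces to choosing $p$ with $H^{2\alpha}\hookrightarrow L^p$ and $q$ with $H^{-2\beta}\hookrightarrow L^q$. For $N=1,2$ or $\alpha\ge\frac N4$ (with $\alpha<1$) the first holds for all $p\in[2,\infty)$, and also $p=\infty$ if $\alpha>\frac N4$. For $\alpha<\frac N4$ it holds for $2\le p\le \frac{2N}{N-4\alpha}$. Similarly, since $0\le -2\beta<\frac N2$, we have $H^{-2\beta}\hookrightarrow L^q$ for $2\le q\le \frac{2N}{N+4\beta}$. Measurability of $h(\cdot,\phi)$ and the fact that $h(\cdot,\phi)\xi\in L^1$ make the functional well defined, so $\calh(\phi)\in H^{2\beta}$. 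The case $\psi=0$ is not needed separately, because \eqref{e:EPSREGH} is the only required estimate.

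The key step is checking that admissible exponents exist, i.e.\ that some $p$ in the $\alpha$-range yields $\frac1q=1-\frac\rho p$ in $[\frac{N+4\beta}{2N},\frac12]$. The condition $\frac1q\le\frac12$ requires $\frac\rho p\ge \frac12$, which is possible with $p\ge2$ exactly when $\rho\ge1$; we take $p\le 2\rho$. The condition $\frac1q\ge \frac{N+4\beta}{2N}$ reads $\frac\rho p\le\frac12-\frac{2\beta}{N}$. Choosing $p$ as large as allowed, this becomes $\rho\frac{N-4\alpha}{2N}\le \frac{N-4\beta}{2N}$, i.e.\ $\rho\le\frac{N/4-\beta}{N/4-\alpha}$ when $\alpha<\frac N4$, which is the third hypothesis in \eqref{e:RESTRICTIONS}. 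When $\alpha\geq \frac N4$, $p$ can be taken large enough. We also need the interval $[\rho/(\frac12-\frac{2\beta}N),\,2\rho]$ to meet the admissible $p$-range $[2,p_{\max}]$; the lower endpoint must be at least $2$ or be replaceable by $2$, and this is where $\rho\ge 1-\frac{4\beta}{N}$ enters, since it guarantees $\rho/(\frac12-\frac{2\beta}{N})\ge 2$. I expect this exponent bookkeeping, including the borderline cases $\alpha=\frac N4$ and $N\le 2$, to be the main obstacle. I would handle it by a case split on $\alpha<\frac N4$ versus $\alpha\ge\frac N4$, fixing $p$ explicitly in each case.

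Finally, the algebraic claims are direct. The inequality $0\le\alpha-\beta<1$ follows from $\beta\le0<\frac12\le\alpha$ and $\beta>\alpha-1$, and $\beta\le\frac12\le\alpha$ is immediate. For \eqref{e:W1} with $\gamma=\frac12$, the left side equals $\beta+\frac12-\rho(\alpha-\frac12)$. If $\alpha=\frac12$ this is $\beta+\frac12$, which is positive since $\beta>\alpha-1=-\frac12$. If $\frac12<\alpha<1$, positivity is exactly $\rho<\frac{\beta+1/2}{\alpha-1/2}$, the second hypothesis in \eqref{e:RESTRICTIONS}.
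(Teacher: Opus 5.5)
Your proof is correct and is the paper's argument in dual form. The paper uses the embedding $L^{q}(\R^N)\emb H^{2\beta}(\R^N)$ with the endpoint exponent $q=\frac{2N}{N-4\beta}$ (the dual of your $H^{-2\beta}\emb L^{2N/(N+4\beta)}$), then H\"older and $H^{2\alpha}\emb L^{q\rho}$, which is your choice $p=\frac{2N\rho}{N-4\beta}$; its conditions $q\rho\geq 2$ and $2\alpha-\frac N2\geq-\frac{N}{q\rho}$ are exactly the first and third restrictions in \eqref{e:RESTRICTIONS}.

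Your extra freedom to take $p=\max\{2,\frac{2N\rho}{N-4\beta}\}$ in fact shows that the hypothesis $\rho\geq 1-\frac{4\beta}{N}$ is not needed. One small correction: your claim that $h(\cdot,\phi)\xi\in L^1$, and your remark that the case $\psi=0$ is not needed, both tacitly use $h(\cdot,0)=0$. This holds for $f_{i2}$ in the application but is not stated in the lemma, and $\calh(\phi)\in H^{2\beta}(\R^N)$ follows precisely from \eqref{e:EPSREGH} with $\psi=0$ together with $\calh(0)=0$.
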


\begin{proof}
If $-\frac{N}{4}<\beta\leq 0$ then by the Sobolev type embedding (see \cite[2.8.1/15]{triebel})
\begin{equation*}
L^q(\R^{N})\emb H^{2\beta}(\R^{N})\ \text{ for }q=\tfrac{2N}{N-4\beta}\in(1,2]
\end{equation*}
we infer from \eqref{e:GROWTHH} and the H\"older inequality that 
\begin{equation*}
\|\calh(\phi)-\calh(\psi)\|_{H^{2\beta}(\R^{N})}\leq 
c\|\phi-\psi\|_{L^{q\rho}(\R^{N})}(\|\phi\|^{\rho-1}_{L^{q\rho}(\R^N)}+\|\psi\|_{L^{q\rho}(\R^{N})}^{\rho-1}).
\end{equation*}
We consider $\gamma=\frac{1}{2}\leq\alpha<1$ such that
$H^{2\alpha}(\R^{N})$ embeds into $L^{q\rho}(\R^{N})$,
which holds if $q\rho\geq 2$ and $2\alpha-\frac{N}{2}\geq-\frac{N}{q\rho}$. This yields \eqref{e:EPSREGH}
for $\frac{1}{2}\leq \alpha<1$ and $-\frac{N}{4}<\beta\leq 0$ such that 
$\beta\geq\tfrac{N}{4}-\tfrac{N\rho}{4}$ and $\alpha\geq\tfrac{N}{4}-\tfrac{N}{4\rho}+\tfrac{\beta}{\rho}$.
To satisfy \eqref{e:W1} we also need
$\rho(\alpha-\frac{1}{2})<\beta+\frac{1}{2}$,
hence we get the restrictions in \eqref{e:RESTRICTIONS}.
\end{proof}

Taking \eqref{e:DECOMPOSITION} and \eqref{e:GROWTHF02} into account, we choose
$\beta_i=0$ for $i=1,\ldots,n$, and consider Nemytskii operators $\calf_i$ corresponding to $f_{i}$ acting from $E^{\alpha_i}$ to $E^0$ for some $\frac{1}{2}\leq\alpha_i<1$.
In Lemma~\ref{lem:SUITABLERHO} we derived conditions under which $\calf_i$, $i=1,\ldots,n$, satisfy \eqref{e:EPSREG} and \eqref{e:WAR1EQUIV} with $\rho_i\geq 1$ and $\gamma=\frac{1}{2}$, that is,
$$1\leq\rho_i<\tfrac{1}{2\alpha_i-1}\text{ if }\alpha_i>\tfrac{1}{2}\ \text{ and }\ 1\leq\rho_i\leq\tfrac{N}{N-4\alpha_i}\text{ if }\alpha_i<\tfrac{N}{4}.$$
Thus, if $N=1,2$ then we can have $\rho_i\geq 1$ arbitrary and choose $\alpha_i\in\bigl[\frac{1}{2},\frac{1}{2}(1+\frac{1}{\rho_i})\bigr)$, whereas if $N\geq 3$ then we can have $1\leq\rho_i<\frac{N+2}{N-2}$ and choose $\alpha_i\in\bigl[\max\{\frac{N}{4}(1-\frac{1}{\rho_i}),\frac{1}{2}\},\frac{1}{2}(1+\frac{1}{\rho_i})\bigr)$ for $i=1,\ldots,n$.

Since the operator $-\kappa(\delta+\kappa-\Delta)^{-1}$ is bounded linear in $E^0=L^2(\R^{N})$, \eqref{e:EPSREG} for $\calf_{n+1+i}$ follows from the corresponding property for $\calf_i$ with  $\beta_{n+1+i}=\beta_{i}=0$, $\alpha_{n+1+i}=\alpha_{i}$ and $\rho_{n+1+i}=\rho_{i}$ for $i=0,\ldots,n$. This shows that 
$\beta_i=0$, $\alpha_i\in[\frac{1}{2},1)$ for $i\in\cali$ and all assumptions of Theorem~\ref{thm:EXTRAPOLATED} are satisfied.

\begin{thm}
Under conditions \eqref{e:F}--\eqref{e:CONDRHO},
the abstract Cauchy problem \eqref{e:ACPVCH} corresponding to the modified viscous Cahn-Hilliard problem  \eqref{e:mvCH}, \eqref{e:INIT} 
possesses for any $u_0\in H^1(\R^{N})$  a unique $\gamma$-solution $u$, with $\gamma=\frac{1}{2}$, defined on the maximal interval of existence $[0,\tau_{u_0})$, that is,
$$u\in C([0,\tau_{u_0});H^1(\R^{N}))\cap C((0,\tau_{u_0});H^{2}(\R^{N}))\cap\bigcap_{T\in(0,\tau_{u_0})}\bigcap_{i=0}^{n} \mathcal{L}^\infty_{\alpha_i-\frac{1}{2}}((0,T];H^{2\alpha_i}(\R^{N})),$$
$$\dot{u}\in C((0,\tau_{u_0});H^{2_{-}}(\R^{N})),$$
which satisfies Duhamel's formula
\begin{equation*}
u(t)=S(t)u_0+\sum_{i=0}^{2n+1}\int_{0}^{t}S(t-s)\calf_i(u(s))ds,\ t\in[0,\tau_{u_0}),
\end{equation*}
and in $L^2(\R^{N})$ the differential equation
$$\dot{u}(t)+A u(t)=\sum_{i=0}^{2n+1}\calf_i(u(t)),\ t\in(0,\tau_{u_0}).$$ 
Moreover, if $\tau_{u_0}<\infty$ then
\begin{equation*}
\lim_{t\to\tau_{u_0}^{-}}\norm{u(t)}_{H^1(\R^{N})}=\infty.
\end{equation*}	
Furthermore, for any $0<\theta<\frac{1}{2}$ we have
\begin{equation*}
\lim_{t\to0^{+}}t^{\theta}\norm{u(t)}_{H^{1+2\theta}(\R^{N})}=0.
\end{equation*}
\end{thm}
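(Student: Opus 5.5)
The plan is to verify Assumption~\ref{e:ASS} in the extrapolated scale generated by $A=1-\Delta$ in $L^2(\R^N)$, with $\J_0=[-1,1]$, $\gamma=\frac12$ and all $\beta_i=0$, and then read off the conclusions from Corollary~\ref{cor:INTERVAL}, the discussion following Assumption~\ref{e:ASS}, and Theorem~\ref{thm:EXTRAPOLATED}. For $\calf_0$ (and $\calf_{n+1}$) take $\rho_0=1$ and $\alpha_0\in[\max\{\frac N{2r},\frac12\},1)$. Then Lemma~\ref{lem:PROPERTY} and the boundedness of $(\kappa+1)I$ and of $-\kappa(\delta+\kappa-\Delta)^{-1}$ on $L^2$ give \eqref{e:EPSREG}. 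Since $g\in L^2$, estimate \eqref{e:EPSREG2} also holds.

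For $i=1,\dots,n$, use the decomposition \eqref{e:DECOMPOSITION}.
\begin{itemize}
\item The globally Lipschitz part $f_{i1}$ with $f_{i1}(x,0)=0$ defines a Lipschitz map $L^2\to L^2$, hence $H^{2\alpha_i}\to L^2$.
\item The part $f_{i2}$ falls under Lemma~\ref{lem:SUITABLERHO} with $\beta=0$. Choose $\alpha_i$ in the range indicated there: $[\frac12,\frac12(1+\frac1{\rho_i}))$ for $N=1,2$, and $[\max\{\frac N4(1-\frac1{\rho_i}),\frac12\},\frac12(1+\frac1{\rho_i}))$ for $N\ge3$. This range is nonempty precisely because $\rho_i<\frac{N+2}{N-2}$.
\item The sum satisfies \eqref{e:EPSREG} with constant $1+\|\phi\|^{\rho_i-1}+\|\psi\|^{\rho_i-1}$. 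Since $f_i(x,0)=0$, taking $\psi=0$ gives \eqref{e:EPSREG2}.
\end{itemize}
The terms $\calf_{n+1+i}$ inherit the same indices after composing with the bounded operator $-\kappa(\delta+\kappa-\Delta)^{-1}$ on $L^2$.

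With these choices the remaining conditions are immediate. We have $0\le\alpha_j-\beta_i=\alpha_j<1$, which is \eqref{e:WAR2}. We have $\max\beta_i=0\le\frac12\le\min\alpha_i$, which is \eqref{e:WAR4}. Finally $\omega_i>0$ by \eqref{e:W1}, and for $\rho_0=1$ directly $\omega_0=1-\alpha_0>0$.

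Corollary~\ref{cor:INTERVAL} and the subsequent discussion then give, for every $u_0\in E^{1/2}=H^1(\R^N)$, a unique $\gamma$-solution on a maximal interval with the following properties:
\begin{itemize}
\item continuity into $H^1$ up to $t=0$, since $S$ is $C^0$ on $E^{1/2}$;
\item membership in the weighted $\mathcal L^\infty$ spaces;
\item the blow-up alternative in the $H^1$ norm;
\item $t^\theta\|u(t)\|_{E^{1/2+\theta}}\to0$ for $\theta\in(0,\beta-\gamma+1)=(0,\frac12)$. Here we use Proposition~\ref{prop:CONV2} with the dense subspace $E^{\delta}$, which gives the limit in $H^{1+2\theta}$.
\end{itemize}
Theorem~\ref{thm:EXTRAPOLATED} with $\beta=0$ then yields $u\in C((0,\tau_{u_0});E^1)=C((0,\tau_{u_0});H^2)$ and $\dot u\in C((0,\tau_{u_0});E^\xi)$ for all $\xi<1$, that is, $\dot u$ continuous into $H^{2_-}$. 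It also gives the equation in $E^0=L^2$, with $A_0=A$ on $H^2$.

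The main obstacle is bookkeeping rather than analysis. First, the choice of each $\alpha_i$ must satisfy the three constraints of Lemma~\ref{lem:SUITABLERHO} at once while staying in $[\frac12,1)$ with $\beta=0$. Second, the $L^r_U$ condition, including the edge case $r=2$, must be compatible with $\alpha_0<1$; it is, since $r>\frac N2$ means $\frac N{2r}<1$. Third, the affine term $g$ must be absorbed into \eqref{e:EPSREG2}. I would check the $N\ge3$ interval endpoints explicitly, confirming that $\frac N4(1-\frac1\rho)<\frac12(1+\frac1\rho)$ is equivalent to $\rho<\frac{N+2}{N-2}$.
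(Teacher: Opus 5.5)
Your proposal is correct and follows essentially the same route as the paper. You verify Assumption~\ref{e:ASS} with $\gamma=\frac12$, $\beta_i=0$ and $\alpha_i\in[\frac12,1)$, using Lemma~\ref{lem:PROPERTY}, the decomposition \eqref{e:DECOMPOSITION} with Lemma~\ref{lem:SUITABLERHO}, and the boundedness of $-\kappa(\delta+\kappa-\Delta)^{-1}$ on $L^2$, and then read off the conclusions from Corollary~\ref{cor:INTERVAL}, Proposition~\ref{prop:CONV2} and Theorem~\ref{thm:EXTRAPOLATED}. One bookkeeping detail to tighten: when $r=2$ (so that $N\in\{2,3\}$ gives $\frac{N}{2r}\geq\frac12$), you must exclude the endpoint $\alpha_0=\frac{N}{2r}$, since Lemma~\ref{lem:PROPERTY} requires $\sigma>\frac{N}{2r}$ in that case, as the paper states explicitly.
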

 
\subsection{$2m$-th order equation in cell population dynamics}

In recent papers \cite{EV24,EV25} Efendiev and Vougalter considered a one-dimensional Cauchy problem for a $2m$-th order equation arising from mathematical biology and concerning the dynamics of cell density $u$ as a function of the cell genotype and time. The model featured a drift term $bu_x$, a~growth term $au$ and a nonlocal term $\int_\R G(x-y) f_1(y,u(y,t))dy$ with globally Lipschitz function $f_1$ and integrable function $G$. We generalize the problem to dimension $N$, allow the function $f_1$ to grow with rate $\rho_1\geq 1$ and model the growth of the cell population by a function $f_2$ with growth exponent $\rho_2\geq 1$. In order to simplify the argument, we do not consider the drift term here.       

More precisely, given $m\in\N$, we consider a Cauchy problem for
\begin{equation}\label{e:CELL}
\dot{u}+(-\Delta)^m u=(G\star f_1(\cdot,u))(x)+f_2(x,u),\ t>0,\ x\in\R^{N},
\end{equation}
where the given kernel $G\in L^{1}(\R^{N})$ does not vanish identically and the reaction terms $f_i\colon\R^N\times \R\to \R$, $i\in\cali=\{1,2\}$, are such that
\begin{equation}\label{e:CONDF2}
f_i(x,0)=0,\ x\in \R^N,
\end{equation}
\begin{equation}\label{e:CONDLIPF2}
|f_{i}(x,s_1)-f_{i}(x,s_2)|\leq c_0 |s_1-s_2|(1+|s_1|^{\rho_i-1} +|s_2|^{\rho_i-1}), \ x\in\R^{N},\ s_1, s_2\in \R,
\end{equation}
with a certain positive constant $c_0$ and $\rho_i\geq 1$, $i\in\cali$. 
We are going to treat $G\star f_1(\cdot,u)$ and $f_2(\cdot,u)$
as Nemytskii operators in the scale of Lebesgue spaces and compute the constraints for $\rho_i$'s so that for any $u_0\in L^\gamma(\R^{N})$, with given $\gamma\in[1,\infty]$, there exists a unique $\gamma$-solution to Duhamel's formula corresponding to \eqref{e:CELL}. 

For this purpose, we consider the $m$-harmonic heat semigroup 
$$S_m(t)u_0=K^m_t\star u_0,\ u_0\in \cals(\R^{N}),\ t>0,$$
where $K^m_t$ denotes the $m$-harmonic heat kernel (see \cite[Example VI.1]{EZ})
$$K^m_t(x)=(2\pi)^{-N}\int_{\R^{N}}e^{\mathbbm{i}z\cdot x-t|z|^{2m}}dz,\ x\in\R^{N},\ t>0.$$
The semigroup extends uniquely to $L^2(\R^{N})$ and $\{S_m(t)\}_{t>0}$ in $L^2(\R^{N})$ is generated by the polyharmonic operator $A=-(-\Delta)^m$ with $\dom(A)=H^{2m}(\R^{N})$. The semigroup also extends uniquely to any $L^p(\R^{N})$, $p\in[1,\infty]$, and smooths to any $L^q(\R^{N})$ with $q\geq p$. Indeed, if we consider $L^p(\R^{N})$ spaces with regularity index $\reg(p)=-\frac{N}{2mp}$ for $p\in\J=[1,\infty]$, then for $p\in\J_0=[1,\infty]$ and $p\leq q$ we have $p\leadsto q$ (see \cite[(4.4)]{CHQRB17}) and 
$$t^{\dif(p,q)}\norm{S_m(t)}_{\mathcal{L}(L^p(\R^{N});L^q(\R^{N}))}\leq M_m(p,q,\tau)\text{ for }0<t\leq\tau,$$
with $\dif(p,q)=\reg(q)-\reg(p)\geq 0$.

If $\calf_1$ is the Nemytskii operator corresponding to $G\star f_1(\cdot,u)$ on $L^{p_1}(\R^{N})$ with $p_1\in[\rho_1,\infty]$
and $\calf_2$ is the Nemytskii operator corresponding to $f_2$ on $L^{p_2}(\R^{N})$ with $p_2\in[\rho_2,\infty]$,
then we have $\calf_i\colon L^{p_i}(\R^{N})\to L^{q_i}(\R^{N})$ with $q_i=\frac{p_i}{\rho_i}$ for $i\in\cali$ and by the Young and H\"older inequality we get for $i\in\cali$
$$\|\calf_i(\phi)-\calf_i(\psi)\|_{L^{q_i}(\R^{N})}\leq c\|\phi-\psi\|_{L^{p_i}(\R^{N})}(1+\|\phi\|_{L^{p_i}(\R^{N})}^{\rho_i-1}+\|\psi\|_{L^{p_i}(\R^{N})}^{\rho_i-1}),\ \phi,\psi\in L^{p_i}(\R^{N}).$$
Our aim is to apply Corollary~\ref{cor:EXIST} to find a~$\gamma$-solution of Duhamel's formula with $\gamma\in[1,\infty]$
defined up to the maximal time of existence, 
\begin{equation}\label{e:DUHCELL}
u(t)=S_m(t)u_0+\sum_{i\in\cali}\int_{0}^{t}S_m(t-s)\calf_i(u(s))ds\ \text{ for }t\in (0,\tau_{u_0}),
\end{equation}
and later refine its regularity via Lemma~\ref{lem:BETTERREGULARITY}.

Note that $\{S_m(t)\}_{t>0}$ is a~semigroup on $L^{\gamma}(\R^{N})$ and on each $L^{q_i}(\R^{N})$ for $i\in\cali$. 
To accomplish our task, we need to know that \eqref{e:SETUP1}, \eqref{e:SETUP2}, \eqref{e:CONDALPHAJ}, \eqref{e:TOGAMMA} and \eqref{e:POSITIVEOMEGA} are satisfied. Note that \eqref{e:POSITIVEOMEGA} means that 
\begin{equation*}
\tfrac{N}{2m}(\rho_i-1)<\gamma\ \text{ for }i\in\cali,
\end{equation*} 
whereas \eqref{e:SETUP2} and \eqref{e:TOGAMMA} hold if
\begin{equation}\label{e:10NEW}
\max\{\gamma,\rho_i\}\leq p_i\leq\gamma\rho_i\ \text{ for }i\in\cali.
\end{equation}
The choice of $p_1$ and $p_2$ is further limited by \eqref{e:SETUP1} and \eqref{e:CONDALPHAJ}, that is,
\begin{equation}\label{e:20}
\tfrac{\rho_1}{p_1}<\tfrac{2m}{N}+\tfrac{1}{p_2}\ \text{ and }\ \tfrac{\rho_2}{p_2}<\tfrac{2m}{N}+\tfrac{1}{p_1}.
\end{equation} 

\begin{thm}\label{thm:CELL}
Let $m\in\N$, $G\in L^1(\R^{N})$, $\gamma\in[1,\infty]$ and $f_i\colon\R^{N}\times\R\to\R$, $i\in\cali=\{1,2\}$, satisfy \eqref{e:CONDF2}, \eqref{e:CONDLIPF2} with
\begin{equation}\label{e:RHOBOUNDS}
1\leq\rho_i<\tfrac{2m}{N}\gamma+1\ \text{ for }i\in\cali.
\end{equation}
Then for any $u_0\in L^\gamma(\R^{N})$ there exists a~unique $\gamma$-solution $u(\cdot,u_0)$ of \eqref{e:DUHCELL} defined on $(0,\tau_{u_0})$ such that for $T\in(0,\tau_{u_0})$
\begin{equation}\label{e:HOWREGULARCELL}
u(\cdot,u_0)\in C((0,\tau_{u_0});L^\gamma(\R^{N}))\cap\bigcap_{i\in\cali}C((0,\tau_{u_0});L^{\gamma\rho_i}(\R^{N}))\cap\mathcal{L}^\infty_{\frac{N}{2m\gamma}(1-\frac{1}{\rho_i})}((0,T];L^{\gamma\rho_i}(\R^{N}))
\end{equation}
and
\begin{equation}\label{e:LIMSUPCELL}
\lim_{t\to\tau_{u_0}^{-}}\norm{u(t,u_0)}_{L^{\gamma}(\R^{N})}=\infty\ \text{ provided that }\tau_{u_0}<\infty.
\end{equation}
Moreover, for $\gamma\in[1,\infty)$ the solution $u(\cdot,u_0)$ extends continuously to a function in $C([0,\tau_{u_0});L^\gamma(\R^{N}))$ with $u(0)=u_0$.

\noindent 
Furthermore, 
if $\rho=\max\{\rho_1,\rho_2\}$ then for any $q\in[\gamma\rho,\infty]$ we have
\begin{equation}\label{e:HOWREGULARCELL2}
\begin{split}
&u(\cdot,u_0)\in C((0,\tau_{u_0});L^q(\R^{N}))\cap\bigcap_{T\in(0,\tau_{u_0})}\mathcal{L}^\infty_{\frac{N}{2m}(\frac{1}{\gamma}-\frac{1}{q})}((0,T];L^q(\R^{N})).
\end{split}
\end{equation}
\end{thm}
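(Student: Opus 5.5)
The plan is to apply Corollary~\ref{cor:EXIST} in the family $E^p=L^p(\R^N)$, $p\in\J=\J_0=[1,\infty]$, with $\reg(p)=-\frac{N}{2mp}$. Then we upgrade the integrability via Lemma~\ref{lem:BETTERREGULARITY}.

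\emph{Choice of exponents.} Take $p_i=\gamma\rho_i$ and $q_i=p_i/\rho_i=\gamma$ for $i\in\cali$; for $\gamma=\infty$ read $p_i=q_i=\infty$. Then \eqref{e:10NEW} holds trivially, because $\gamma\leq\gamma\rho_i$ and $\rho_i\leq\gamma\rho_i$. Hence $\gamma\leadsto p_i$ and $q_i=\gamma\leadsto\gamma$, which gives \eqref{e:SETUP2} and \eqref{e:TOGAMMA}. Smoothing from $L^{q_i}$ to $L^{p_j}$ holds since $q_i=\gamma\leq p_j$, and this is \eqref{e:SETUP1}. Condition \eqref{e:CONDALPHAJ} reads $\frac{N}{2m}(\frac1\gamma-\frac1{\gamma\rho_j})<1$. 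Since $\frac{N}{2m\gamma}(1-\frac1{\rho_j})<\frac{N}{2m\gamma}\cdot\frac{\rho_j-1}{\rho_j}$ and $\rho_j-1<\frac{2m}{N}\gamma$, this quantity is $<\frac1{\rho_j}\leq1$. Condition \eqref{e:POSITIVEOMEGA} becomes $\omega_i=1-\frac{N}{2m\gamma}(\rho_i-1)>0$, which is exactly \eqref{e:RHOBOUNDS}. The Lipschitz estimates \eqref{e:EPSREG} and \eqref{e:EPSREG2} come from the Young and H\"older computation already displayed, together with $f_i(x,0)=0$. The semigroup property on $L^\gamma$ is given.

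\emph{Conclusions.} Corollary~\ref{cor:EXIST} then yields the unique maximal $\gamma$-solution, \eqref{e:HOWREGULARCELL} with weight $\dif(\gamma,\gamma\rho_i)=\frac{N}{2m\gamma}(1-\frac1{\rho_i})$, and the blow-up alternative \eqref{e:LIMSUPCELL}. For $\gamma<\infty$, the heat semigroup is strongly continuous on $L^\gamma$, so \eqref{e:STC0} holds and the last part of Corollary~\ref{cor:EXIST} gives continuity at $t=0$.

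\emph{Regularity in $L^q$, $q\geq\gamma\rho$.} Choose $j$ with $\rho_j=\rho$, and use \eqref{e:NEWFORMULA} together with Lemma~\ref{lem:BETTERREGULARITY} with $\delta=q$. We need $\alpha_j=\gamma\rho\leadsto q$, which holds since $q\geq\gamma\rho$. We also need \eqref{e:THETA}, namely $\gamma\leadsto q$ with $\frac{N}{2m}(\frac1\gamma-\frac1q)<1$. I expect this to be the main obstacle, because for $q=\infty$ it requires $\gamma>\frac{N}{2m}$, which is not implied by \eqref{e:RHOBOUNDS}. In that case we iterate as in Remark~\ref{rem:ITERATION}(ii). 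The step from $L^{\gamma}$-type data to $L^{p}$ is allowed whenever $\frac{N}{2m}(\frac1{q_i'}-\frac1{q})<1$. So we climb a finite chain $\gamma\rho=r_0<r_1<\dots<r_k=q$: at each stage the current $u\in\mathcal{L}^\infty$ in $L^{r_l}$ gives $\calf_i(u)\in L^{r_l/\rho_i}$, and each increment is kept small enough that the smoothing exponent stays below $1$. Nonnegativity of the new $\omega$'s must also be verified. Since the weights telescope to $\dif(\gamma,r_l)$, the subcriticality $\omega_i>0$ with respect to $\gamma$ should persist; this bookkeeping is the delicate point. Continuity in $L^q$ then follows from Lemma~\ref{lem:BETTERREGULARITY}(ii), using the continuity in $L^{r_{l}}$ obtained at the previous step.
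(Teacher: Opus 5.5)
Your proposal follows the paper's proof essentially step for step, and is correct to the same extent the paper's argument is. It makes the same choice $p_i=\gamma\rho_i$, $q_i=\gamma$ fed into Corollary~\ref{cor:EXIST}, with \eqref{e:STC0} coming from strong continuity when $\gamma<\infty$, and then applies Lemma~\ref{lem:BETTERREGULARITY} finitely many times along a chain of exponents with $\alpha_i=r_l$, $\beta_i=r_l/\rho_i$. The paper uses equally spaced points up to a finite $q>\max\{\frac{N}{2m},\gamma\}\rho$ and then one last step to $\delta=\infty$. Your ``delicate bookkeeping'' is just the one-line identity $\omega_i=1-\frac{N}{2m\gamma}(\rho_i-1)$, which holds at every level independently of $r_l$.

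One caveat, shared with the paper's text rather than specific to you: the Young--H\"older estimate for $\calf_i\colon L^{\gamma\rho_i}(\R^N)\to L^{\gamma}(\R^N)$, which you take over from the text preceding the theorem, cannot hold for $\rho_i>1$ when $f_i$ has a linear part. For example, $f_i(x,s)=s$ satisfies \eqref{e:CONDLIPF2} for every $\rho_i\geq 1$, but Lebesgue spaces on $\R^N$ are not nested. Strictly, one should split each $f_i$ as in \eqref{e:DECOMPOSITION} into a globally Lipschitz part and a pure-power part. The Lipschitz part is treated as an extra map $L^{r}\to L^{r}$ with exponent $1$ (with $r=\gamma$, resp.\ $r=r_l$). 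The sum-of-mappings framework absorbs these extra maps without changing the rest of your argument.
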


\begin{proof}
Taking $p_i=\gamma\rho_i$, $i\in\cali$, we see that \eqref{e:10NEW} and \eqref{e:20} are satisfied. Hence, by Corollary~\ref{cor:EXIST} we obtain the existence and uniqueness of $u$ satisfying \eqref{e:HOWREGULARCELL} and \eqref{e:LIMSUPCELL},
whereas the continuous extension at $t=0$ in $L^\gamma(\R^{N})$ for $\gamma\in[1,\infty)$ follows from the fact that the semigroup $\{S_m(t)\}_{t>0}$ is then strongly continuous.  

Note that if $\gamma=\infty$ then we already know that for $T\in(0,\tau_{u_0})$
$$u(\cdot,u_0)\in C((0,\tau_{u_0});L^\infty(\R^{N}))\cap L^\infty((0,T];L^\infty(\R^{N})).$$
Therefore, to prove \eqref{e:HOWREGULARCELL2}, let $\rho=\max\{\rho_1,\rho_2\}$ and consider $\gamma\in[1,\infty)$. We will show that $u\in C((0,\tau_{u_0});L^q(\R^{N}))\cap \mathcal{L}_{\dif(\gamma,q)}^{\infty}((0,T];L^q(\R^{N}))$ for any $q\in(\gamma\rho,\infty]$ and $T\in(0,\tau_{u_0})$. 

First let $q\in(\gamma\rho,\infty)$ and divide $[\gamma\rho,q]$ into $k$ subintervals by points $\delta_n=\gamma\rho+\frac{n}{k}(q-\gamma\rho)$, $n=0,\ldots,k$, where $k\in\N$ is so large that
\begin{equation}\label{e:LARGEK}
\tfrac{q-\gamma\rho}{(k\gamma\rho+q-\gamma\rho)\gamma\rho}<\tfrac{2m}{N}-\tfrac{\rho-1}{\gamma\rho}.
\end{equation}
We apply Lemma~\ref{lem:BETTERREGULARITY} and Remark~\ref{rem:ITERATION} (ii) successively with each $\delta_n$. We already know that $u\in C((0,\tau_{u_0});L^{\delta_0}(\R^{N}))\cap\mathcal{L}_{\dif(\gamma,\delta_0)}^\infty((0,T];L^{\delta_0}(\R^{N}))$ for $T\in(0,\tau_{u_0})$. Suppose that 
$$u\in C((0,\tau_{u_0});L^{\delta_n}(\R^{N}))\cap\mathcal{L}_{\dif(\gamma,\delta_n)}^\infty((0,T];L^{\delta_n}(\R^{N})),\ T\in(0,\tau_{u_0}),$$ 
for some $n\in\{0,\ldots,k-1\}$. Note that we have a counterpart of \eqref{e:EPSREG2} with $\alpha_i=\delta_n$ and $\beta_i=\frac{\delta_n}{\rho_i}$ for $i\in\cali$. Then the counterpart of \eqref{e:WEAKERSETUP2} holds trivially, whereas
the counterpart of \eqref{e:NONNEGATIVEOMEGA} means $\frac{N}{2m}(\rho_i-1)\leq\gamma$  for all $i\in\cali$
and is also trivially guaranteed by \eqref{e:RHOBOUNDS}. Moreover, \eqref{e:ALPHAJDELTA} holds with $\delta=\delta_{n+1}$
and then \eqref{e:THETA} is satisfied, since by \eqref{e:LARGEK} we have
$$\tfrac{\rho}{\delta_n}-\tfrac{1}{\delta_{n+1}}=\tfrac{\rho-1}{\delta_n}+\tfrac{q-\gamma\rho}{k\delta_{n+1}\delta_{n}}\leq\tfrac{\rho-1}{\gamma\rho}+\tfrac{q-\gamma\rho}{(k\gamma\rho+q-\gamma\rho)\gamma\rho}<\tfrac{2m}{N}.$$
This shows that after $k$ steps $u\in C((0,\tau_{u_0});L^q(\R^{N}))\cap\mathcal{L}_{\dif(\gamma,q)}^\infty((0,T];L^{q}(\R^{N}))$ for $T\in(0,\tau_{u_0})$ with an arbitrary $q\in(\gamma\rho,\infty)$.
Choosing initially $q>\max\{\frac{N}{2m},\gamma\}\rho$, in the subsequent application of Lemma~\ref{lem:BETTERREGULARITY} with $\delta=\infty$ we conclude that 
$$u\in C((0,\tau_{u_0});L^\infty(\R^{N}))\cap\mathcal{L}_{\frac{N}{2m\gamma}}^\infty((0,T];L^{\infty}(\R^{N}))\ \text{ for }T\in(0,\tau_{u_0}),$$
which ends the proof.
\end{proof}

We now consider another $2m$-th order equation with $G\star f_1(\cdot,u)$ in \eqref{e:CELL} replaced by the Hartree type nonlinearity
$(G\star|u|^2)u$, that is,
\begin{equation}\label{e:CELL2}
\dot{u}+(-\Delta)^m u=(G\star|u|^2)u+f(x,u),\ t>0,\ x\in\R^{N},
\end{equation}
with a given potential $G\in L^{1}(\R^{N})$ and $f\colon\R^N\times \R\to \R$ which satisfies
\begin{equation}\label{e:CONDF3}
f(x,0)=0,\ x\in \R^N,
\end{equation}
\begin{equation}\label{e:CONDLIPF3}
|f(x,s_1)-f(x,s_2)|\leq c_0 |s_1-s_2|(1+|s_1|^{\rho-1} +|s_2|^{\rho-1}), \ x\in\R^{N},\ s_1, s_2\in \R,
\end{equation}
with certain constants $c_0>0$ and $\rho\geq 1$.

In \cite[Lemma 6.1]{CHQRB17}  it was shown that the Nemytskii operator $\calf_1\colon L^{4}(\R^{N})\to L^{\frac{4}{3}}(\R^{N})$  corresponding to $(G\star|u|^2)u$ satisfies for $\phi,\psi\in L^4(\R^N)$
\begin{equation*}
\|\calf_1(\phi)-\calf_1(\psi)\|_{L^\frac{4}{3}(\R^N)}\leq \tfrac{3}{2}\|G\|_{L^1(\R^N)}\|\phi-\psi\|_{L^4(\R^N)}(\|\phi\|^2_{L^4(\R^N)}
+\|\psi\|^2_{L^4(\R^N)}).
\end{equation*}
Recall also that for the Nemytskii operator $\calf_2\colon L^p(\R^N)\to L^{\frac{p}{\rho}}(\R^{N})$ corresponding to $f$ with $p\in[\rho,\infty]$ we have
$$\|\calf_2(\phi)-\calf_2(\psi)\|_{L^{\frac{p}{\rho}}(\R^{N})}\leq c\|\phi-\psi\|_{L^{p}(\R^{N})}(1+\|\phi\|_{L^{p}(\R^{N})}^{\rho-1}+\|\psi\|_{L^{p}(\R^{N})}^{\rho-1}),\ \phi,\psi\in L^{p}(\R^{N}).$$
Verifying the assumptions of Corollary~\ref{cor:EXIST} again, we see that \eqref{e:POSITIVEOMEGA} means that
\begin{equation}\label{e:WARG1}
\gamma>\tfrac{N}{m}\ \text{ and }\ \gamma>\tfrac{N}{2m}(\rho-1),
\end{equation}
\eqref{e:SETUP2} and \eqref{e:TOGAMMA} hold if
\begin{equation}\label{e:WARG2}
\gamma\in[\tfrac{4}{3},4]\ \text{ and }\ \max\{\gamma,\rho\}\leq p\leq\gamma\rho,
\end{equation}
whereas \eqref{e:SETUP1} and \eqref{e:CONDALPHAJ} further require that
\begin{equation}\label{e:WARP}
\tfrac{3}{4}<\tfrac{2m}{N}+\tfrac{1}{p}\ \text{ and }\ \tfrac{\rho}{p}<\tfrac{2m}{N}+\tfrac{1}{4}.
\end{equation}
Note that, having \eqref{e:WARG1} and \eqref{e:WARG2},  if $p\leq 4$ then the left inequality in \eqref{e:WARP} holds, while if $p>4$ then the right one holds.

Combining the above restrictions, we get the following result.

\begin{thm}\label{thm:CELL2}
Let $m\in\N$, $m>\frac{N}{4}$ and $\gamma\in[\frac{4}{3},4]$ be such that $\gamma>\frac{N}{m}$. 
Assume that $G\in L^1(\R^{N})$ and $f\colon\R^{N}\times\R\to\R$  satisfies \eqref{e:CONDF3} and \eqref{e:CONDLIPF3} with 
\begin{equation*}
1\leq\rho<\tfrac{2m}{N}\gamma+1.
\end{equation*}
If $p\in[\max\{\gamma,\rho\},\gamma\rho]$ is such that
either
$$p\leq 4\ \text{ and }\ \tfrac{1}{p}\in[\tfrac{1}{4},\tfrac{1}{\rho}(\tfrac{2m}{N}+\tfrac{1}{4}))$$
or 
$$p>4\ \text{ and }\ \tfrac{1}{p}\in(\tfrac{3}{4}-\tfrac{2m}{N},\tfrac{1}{4}),$$
then for any $u_0\in L^\gamma(\R^{N})$ there exists a~unique $\gamma$-solution $u(\cdot,u_0)$ of \eqref{e:DUHCELL}, corresponding to \eqref{e:CELL2}, defined on $[0,\tau_{u_0})$ such that for any $T\in(0,\tau_{u_0})$
\begin{equation*}
\begin{split}
u(\cdot,u_0)\in C([0,\tau_{u_0});L^\gamma(\R^{N}))&\cap C((0,\tau_{u_0});L^{p}(\R^{N}))\\
&\cap \mathcal{L}^\infty_{\frac{N}{2m}(\frac{1}{\gamma}-\frac{1}{p})}((0,T];L^{p}(\R^{N}))\cap \mathcal{L}^\infty_{\frac{N}{2m}(\frac{1}{\gamma}-\frac{1}{4})}((0,T];L^{4}(\R^{N}))
\end{split}
\end{equation*}
and \eqref{e:LIMSUPCELL} holds.
\end{thm}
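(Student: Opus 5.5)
The plan is to apply Corollary~\ref{cor:EXIST} in the Lebesgue family $E^p=L^p(\R^N)$, $p\in[1,\infty]$, with $\reg(p)=-\frac{N}{2mp}$ and $\J_0=[1,\infty]$. The index set is $\cali=\{1,2\}$. For $\calf_1$, the Hartree term, we take $\alpha_1=4$, $\beta_1=\frac43$ and $\rho_1=3$. For $\calf_2$, the Nemytskii operator of $f$, we take $\alpha_2=p$, $\beta_2=\frac{p}{\rho}$ and $\rho_2=\rho$. Estimate \eqref{e:EPSREG} for $\calf_1$ is \cite[Lemma 6.1]{CHQRB17}, and for $\calf_2$ it is the H\"older estimate recalled above. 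Estimate \eqref{e:EPSREG2} then follows from $\calf_i(0)=0$, which holds by \eqref{e:CONDF3}. The semigroup $\{S_m(t)\}_{t>0}$ is a semigroup on every $L^q$. Moreover $q\leadsto r$ whenever $q\le r$, by \cite[(4.4)]{CHQRB17}. So all smoothing hypotheses reduce to inequalities between exponents.

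The key steps are to verify, in order, the following four conditions.

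\eqref{e:SETUP2}: this requires $\gamma\le 4$ and $\gamma\le p$.

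\eqref{e:TOGAMMA}: this requires $\frac43\le\gamma$ and $\frac p\rho\le\gamma$.

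\eqref{e:POSITIVEOMEGA}: a direct computation gives
\[
\omega_1=1-\tfrac{N}{2m}\bigl(\tfrac{3}{\gamma}-\tfrac34\bigr)-\tfrac{N}{2m}\bigl(\tfrac34-\tfrac1\gamma\bigr)=1-\tfrac{N}{m\gamma},
\]
which is positive iff $\gamma>\frac Nm$. Similarly $\omega_2=1-\frac{N}{2m}\frac{\rho-1}{\gamma}$, which is positive iff $\rho<\frac{2m}{N}\gamma+1$. These are exactly the hypotheses.

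\eqref{e:SETUP1}, \eqref{e:CONDALPHAJ}: these ask that $\beta_i\le\alpha_j$ as exponents and that $\frac{N}{2m}\bigl(\frac{1}{\beta_i}-\frac{1}{\alpha_j}\bigr)<1$ for all $i,j$. The pairs $(i,j)=(1,1)$ and $(2,2)$ give $\frac12<\frac{2m}{N}$, i.e. $m>\frac N4$, and $\rho-1<\frac{2m}{N}p$. The latter follows from $\rho-1<\frac{2m}{N}\gamma$ and $p\ge\gamma$. The cross pairs give \eqref{e:WARP}, i.e. $\frac34<\frac{2m}{N}+\frac1p$ and $\frac\rho p<\frac{2m}N+\frac14$. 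They also need $\frac43\le p$ and $\frac p\rho\le4$, which come from $p\ge\gamma\ge\frac43$ and $p\le\gamma\rho\le4\rho$.

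The main obstacle is the case split in \eqref{e:WARP}. If $p\le4$, the first inequality is automatic, since $\frac1p\ge\frac14$ and $m>\frac N4$. The second inequality is then the stated condition $\frac1p<\frac1\rho(\frac{2m}N+\frac14)$. If $p>4$, the second inequality is automatic. Indeed, $\frac\rho p\le\frac{\rho}{\gamma}$, and $\frac\rho\gamma<\frac{2m}{N}+\frac1\gamma\le\frac{2m}N+\frac34$. This alone is not quite below $\frac{2m}N+\frac14$, so I will instead use $\frac{\rho-1}{p}<\frac{2m}{N}$ together with $\frac1p<\frac14$, which gives $\frac\rho p<\frac{2m}N+\frac14$ as required. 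The first inequality is then the stated condition $\frac1p>\frac34-\frac{2m}N$.

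With all hypotheses checked, Corollary~\ref{cor:EXIST} yields a unique maximal $\gamma$-solution. It lies in $C((0,\tau_{u_0});L^\gamma\cap L^4\cap L^p)$ and in the weighted spaces $\mathcal L^\infty_{\dif(\gamma,4)}$ and $\mathcal L^\infty_{\dif(\gamma,p)}$, where $\dif(\gamma,q)=\frac N{2m}(\frac1\gamma-\frac1q)$. It also satisfies the blow-up alternative \eqref{e:LIMSUPCELL}. Finally, $\gamma<\infty$, so $S_m$ is strongly continuous on $L^\gamma$, and \eqref{e:STC0} holds. This gives continuity at $t=0$, hence $u\in C([0,\tau_{u_0});L^\gamma)$ with $u(0)=u_0$.
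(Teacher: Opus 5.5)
Your proposal is correct and follows the paper's own argument. You apply Corollary~\ref{cor:EXIST} in the Lebesgue family with $\calf_1\colon L^4\to L^{4/3}$ ($\rho_1=3$) and $\calf_2\colon L^p\to L^{p/\rho}$, reduce \eqref{e:POSITIVEOMEGA}, \eqref{e:SETUP2}, \eqref{e:TOGAMMA}, \eqref{e:SETUP1} and \eqref{e:CONDALPHAJ} to \eqref{e:WARG1}, \eqref{e:WARG2} and \eqref{e:WARP}, and use the same case split $p\le 4$ versus $p>4$; you are slightly more explicit than the paper, e.g.\ in checking the diagonal pairs $(i,j)=(1,1),(2,2)$ and in writing $\frac{\rho}{p}=\frac{\rho-1}{p}+\frac1p<\frac{2m}{N}+\frac14$ when $p>4$.
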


In Theorem~\ref{thm:CELL2}, for $m=1$, $N=2$ and $\gamma\in(2,4]$ we have $1\leq\rho<\gamma+1$ and we can choose any $p\in[\max\{\gamma,\rho\},\gamma\rho]$.
In particular, $\calf_1$ and $\calf_2$ can have different domains and growth rates.

\subsection{Semilinear Schr\"odinger equation}

We first consider the linear Schr\"odinger semigroup 
$$S(t)u_0=(4\pi\mathbbm{i} t)^{-\frac{N}{2}}e^{\frac{\mathbbm{i}|\cdot|^2}{4t}}\star u_0,\ u_0\in \cals(\R^{N}),\ t>0,$$
in the scale of Lebesgue spaces $E^p=L^p(\R^{N})$ with regularity index $\reg(p)=-\frac{N}{2p}$ for $p\in[1,\infty]$. 
It is known (see \cite[Proposition 2.2.3]{CAZ}) that if $p\in[1,2]$, then $p\leadsto p'=\frac{p}{p-1}\in[2,\infty]$ and with $\dif(p,p')=\reg(p')-\reg(p)\geq 0$ we have
\begin{equation}\label{e:SMOOTHSCHR}
t^{\dif(p,p')}\norm{S(t)}_{\mathcal{L}(L^{p}(\R^{N});L^{p'}(\R^{N}))}\leq (4\pi)^{-\dif(p,p')},\ t>0.
\end{equation}
We can still apply the theory of $\gamma$-solutions from Theorem~\ref{thm:MAIN1} to the abstract semilinear Schr\"odinger equation 
\begin{equation}\label{e:SEMSCHR}
\dot{u}-\mathbbm{i}\Delta u=\sum_{i\in\cali}\calf_i(u),\ t>0,\ x\in\R^{N},
\end{equation}
with $\calf_i$ satisfying \eqref{e:EPSREG}, \eqref{e:EPSREG2} with $\rho_i\geq 1$ on the scale of Lebesgue spaces by looking for $\gamma$-solutions of Duhamel's formula
\begin{equation}\label{e:DUHSCHR}
u(t)=S(t)u_0+\sum_{i\in\cali}\int_{0}^{t}S(t-s)\calf_i(u(s))ds\ \text{ for }\ t\in (0,\tau],
\end{equation}
where $u_0\in L^\gamma(\R^{N})$ for some $\gamma\geq 1$.
However, due to assumptions \eqref{e:SETUP1}, \eqref{e:SETUP2} and scarce available smoothing properties \eqref{e:SMOOTHSCHR} of the family $\{S(t)\}_{t>0}$, we need to have  $\alpha_i=\gamma'$, $\beta_i=\gamma$ for $i\in\cali$ and $\gamma\in\J_0=[1,2]$. This, in turn, implies by the Cauchy inequality  that 
the right-hand side of \eqref{e:SEMSCHR}
can be treated as a single nonlinearity $\calf\colon L^{\gamma'}(\R^N)\to L^{\gamma}(\R^{N})$ such that for $\phi,\psi\in L^{\gamma'}(\R^{N})$
\begin{equation}\label{e:EPSREGSCHR}
\norm{\calf_i(\phi)-\calf_i(\psi)}_{L^{\gamma}(\R^{N})}\leq c_0\norm{\phi-\psi}_{L^{\gamma'}(\R^{N})}(1+\norm{\phi}_{L^{\gamma'}(\R^{N})}^{\rho-1}+\norm{\psi}_{L^{\gamma'}(\R^{N})}^{\rho-1})
\end{equation}
holds with $c_0>0$ and $\rho=\max\limits_{i\in\cali}\rho_i\geq 1$. Thus, applying Theorem~\ref{thm:MAIN1}, we need to verify
\eqref{e:CONDALPHAJ} and \eqref{e:POSITIVEMUIORAZOMEGAINTRO}, which both hold provided that 
\begin{equation}\label{e:CONDFORRHO}
N\bigl(\tfrac{1}{\gamma}-\tfrac{1}{2}\bigr)=\reg(\gamma')-\reg(\gamma)<\tfrac{1}{\rho}. 
\end{equation}

\begin{thm}
Let  $\gamma\in[1,2]$ and $\calf_i\colon L^{\gamma'}(\R^{N})\to L^{\gamma}(\R^{N})$, $i\in\cali$, satisfy \eqref{e:EPSREGSCHR} with $\rho\geq 1$ such that \eqref{e:CONDFORRHO} holds.
Then for any $u_0\in L^\gamma(\R^{N})$ there exists a~locally unique $\gamma$-solution $u(\cdot,u_0)$ of \eqref{e:DUHSCHR} defined on $(0,\tau]$ such that
\begin{equation*}
u(\cdot,u_0)\in\mathcal{L}^\infty_{N(\frac{1}{\gamma}-\frac{1}{2})}((0,\tau];L^{\gamma'}(\R^{N})).
\end{equation*}
If $\gamma=2$ and $\rho\geq 1$ is arbitrary, then $u(\cdot,u_0)\in C([0,\tau_{u_0});L^2(\R^{N}))$
with the maximal time of existence $\tau_{u_0}$ and
\begin{equation*}
\lim_{t\to\tau_{u_0}^{-}}\norm{u(t,u_0)}_{L^2(\R^{N})}=\infty\text{ provided that }\tau_{u_0}<\infty.
\end{equation*}	
\end{thm}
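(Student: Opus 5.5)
The plan is to specialize Theorem~\ref{thm:MAIN1} to the Lebesgue scale $E^p=L^p(\R^N)$ with $\reg(p)=-\frac{N}{2p}$. We take $\J_0=[1,2]$, $\cali$ as given, $\alpha_i=\gamma'$ and $\beta_i=\gamma$ for all $i\in\cali$, and the single growth exponent $\rho=\max_i\rho_i$ in \eqref{e:EPSREGSCHR}. Hypotheses \ref{a:B1} and \ref{a:B2} hold because convergence in different $L^p$ spaces implies a.e.\ convergence of a subsequence, and because $S(t)$ is given by the same kernel convolution on $L^1\cap L^2$ and extends consistently. The estimate \eqref{e:EPSREG2} follows from \eqref{e:EPSREGSCHR} with $\psi=0$, enlarging $c_0$ by $\norm{\calf_i(0)}_{L^\gamma}$.

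Next we verify the setup conditions. Both \eqref{e:SETUP1} and \eqref{e:SETUP2} reduce to $\gamma\leadsto\gamma'$. The bound \eqref{e:SMOOTHSCHR} gives Definition~\ref{def:SMOOTHS}(i). For (ii), joint continuity of $(t,u)\mapsto S(t)u$ from $(0,\infty)\times L^\gamma$ to $L^{\gamma'}$ comes from the uniform bound on compact $t$-sets together with continuity in $t$ for Schwartz data, via a density argument; for $\gamma=1$ we use $\cals$ dense in $L^1$. Set $d=\dif(\gamma,\gamma')=N(\frac1\gamma-\frac12)$. Then \eqref{e:CONDALPHAJ} reads $d<1$, which follows from \eqref{e:CONDFORRHO} since $\rho\ge1$. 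For \eqref{e:POSITIVEMUIORAZOMEGAINTRO} we need $\rho d<\min\{1,1+0\}=1$, which is exactly \eqref{e:CONDFORRHO}. Theorem~\ref{thm:MAIN1} then yields the locally unique $\gamma$-solution in $\mathcal{L}^\infty_{d}((0,\tau];L^{\gamma'})$.

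For $\gamma=2$ we have $d=0$ and $\gamma'=2$, so the condition holds for every $\rho\ge1$ and all spaces coincide with $L^2$. Here $\{S(t)\}$ is the unitary $C^0$ group on $L^2$, so it is a semigroup on $E^\gamma=E^{\beta_i}$. Moreover $2\leadsto2$ with $M=1$, so \eqref{e:TOGAMMA} holds. The subcriticality condition \eqref{e:POSITIVEOMEGA} gives $\omega_i=1>0$, and strong continuity gives \eqref{e:STC0}. Corollary~\ref{cor:EXIST} then supplies the maximal solution, its continuity on $[0,\tau_{u_0})$ in $L^2$ with $u(0)=u_0$, and the blow-up alternative \eqref{e:LIMSUP}.

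The only step needing care is the continuity requirement (ii) in Definition~\ref{def:SMOOTHS} for $\gamma<2$. I would handle it as follows. For $u\in\cals$, the map $t\mapsto S(t)u$ is continuous into $L^2$ and into $L^\infty$, the latter via $\|\widehat{\cdot}\|_{L^1}$, hence into $L^{\gamma'}$ by interpolation. Combining this with the uniform operator bound \eqref{e:SMOOTHSCHR} on $[t_0/2,2t_0]$ and density of $\cals$ in $L^\gamma$ gives joint continuity by a standard $\eps/3$ argument.
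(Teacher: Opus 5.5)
Your proposal is correct and follows essentially the paper's route. With $\J_0=[1,2]$, $\alpha_i=\gamma'$ and $\beta_i=\gamma$, the conditions \eqref{e:CONDALPHAJ} and \eqref{e:POSITIVEMUIORAZOMEGAINTRO} both reduce to \eqref{e:CONDFORRHO}, so Theorem~\ref{thm:MAIN1} gives the locally unique $\gamma$-solution. For $\gamma=2$, the unitary $C^0$ group on $L^2$ makes Corollary~\ref{cor:EXIST} applicable. The only difference is that you verify \ref{a:B1}, \ref{a:B2} and the joint continuity in Definition~\ref{def:SMOOTHS}(ii) by hand, using density of $\cals$ and the uniform bound \eqref{e:SMOOTHSCHR}, whereas the paper simply cites Cazenave for $p\leadsto p'$.
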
 

Recall from \cite[Lemma 6.1]{CHQRB17} that among admissible nonlinearities one finds the Hartree function with potential $G\in L^{p_0}(\R^{N})$ for $p_0\geq 1$, 
$$\calf(u)=(G\star|u|^2)u,\ u\in L^{\gamma'}(\R^{N}),$$
where $\gamma=\frac{4p_0}{2p_0+1}$ and $\frac{3N}{4}<p_0\leq\infty$, since then \eqref{e:EPSREGSCHR} holds with $\rho=3$ and the latter condition guarantees \eqref{e:CONDFORRHO}.

\section*{Statements and declarations}

\noindent 
\textbf{Author Contributions.} R. Czaja and M. Kania contributed equally to the manuscript.

\noindent 
\textbf{Funding.} No funding was received for conducting this study.

\noindent 
\textbf{Data Availability.} No datasets were generated or analysed during the current study.

\noindent 
\textbf{Conflicts of interest/Competing interests.} The authors have no relevant financial or non-financial interests to disclose.

\end{document}